\documentclass{llncs}
\usepackage{makeidx}  
%
\usepackage{amsmath,amssymb,stackrel}
\usepackage[all]{xy}
\usepackage{mathtools}
\usepackage{bm}
\DeclareMathOperator{\Cov}{Cov}
\DeclareMathOperator{\Derivative}{D}
\DeclareMathOperator{\E}{E} 

\DeclareMathOperator{\Supp}{Supp}

\DeclareMathOperator{\Phiexp}{\exp_{\phi}}
\DeclareMathOperator{\Philn}{\ln_{\phi}}
\DeclareMathOperator{\Qexp}{\exp_{q}}
\DeclareMathOperator{\Qln}{\ln_{q}}
\DeclareMathOperator{\Kexp}{\exp_{\kappa}}
\DeclareMathOperator{\Kln}{\ln_{\kappa}}
\DeclareMathOperator{\Range}{Range}

\newcommand{\rangeof}[1]{\Range\left(#1\right)}
\newcommand{\kexpof}[1]{\Kexp\left(#1\right)}
\newcommand{\klnof}[1]{\Kln\left(#1\right)}
\newcommand{\lnof}[1]{\ln\left(#1\right)}
\newcommand{\qexpof}[1]{\Qexp\left(#1\right)}
\newcommand{\qlnof}[1]{\Qln\left(#1\right)}
\newcommand{\phiexpof}[1]{\Phiexp\left(#1\right)}
\newcommand{\philnof}[1]{\Philn\left(#1\right)}
\newcommand{\Bspace}[1]{B_{#1}}

\newcommand{\absoluteval}[1]{\left\vert#1\right\vert}
\newcommand{\chartdom}{\mathcal U}
\newcommand{\covat}[3]{\Cov_{#1}\left(#2,#3\right)}
\newcommand{\densities}{\mathcal P_{\ge}}
\newcommand{\derivby}[1]{\frac{d}{d#1}}
\newcommand{\dirderat}[3]{\Derivative  {#1} \left(#2\right) #3}
\newcommand{\domainof}[1]{\mathcal D\left(#1\right)}

\newcommand{\emanifold}{\prescript{\text{e}}{}{\mathcal P}}
\newcommand{\etransport}[2]{\prescript{\text{e}}{} {\mathbb U} _ {#1} ^ {#2}}
\newcommand{\euler}{\mathrm{e}}
\newcommand{\expectat}[2]{{\E}_{#1}\left[#2\right]}
\newcommand{\expectof}[1]{\E\left(#1\right)}
\newcommand{\expof}[1]{\exp\left(#1\right)}

\newcommand{\logof}[1]{\log\left(#1\right)}
\newcommand{\mBspace}[1]{\prescript{*}{}{B}_{#1}}
\newcommand{\maxexp}[1]{{\mathcal E}\left(#1\right)}
\newcommand{\maxmix}[1]{\prescript{*}{}{\mathcal E}\left(#1\right)}

\newcommand{\mtransport}[2]{\prescript{\text{m}}{} {\mathbb U} _ {#1} ^ {#2}}
\newcommand{\nonnegreals}{\mathbb{R}_{\ge 0}}
\newcommand{\normat}[2]{\left\Vert#2\right\Vert_{#1}}

\newcommand{\pdensities}{\mathcal P_>}

\newcommand{\reals}{\mathbb{R}}
\newcommand{\scalarat}[3]{\left\langle#2,#3\right\rangle_{#1}}
\newcommand{\scalarof}[2]{\left\langle#1,#2\right\rangle}
\newcommand{\sdensities}{\mathcal P_{1}}
\newcommand{\setof}[2]{\left\{#1 : #2 \right\}}
\newcommand{\set}[1]{\left\{#1\right\}}

\newcommand{\supp}[1]{\Supp{#1}}
\newcommand{\transport}[2]{{\mathbb U} _ {#1} ^ {#2}}
\newcommand{\upgamma}[2]{\Gamma\left(#1,#2\right)}

\begin{document}
\frontmatter          
\pagestyle{headings}  

%
\mainmatter              
\title{Nonparametric Information Geometry}
\titlerunning{Nonparametric IG}  
%
\author{Giovanni Pistone}
\authorrunning{G. Pistone} 
%
\tocauthor{Giovanni Pistone}
\institute{Collegio Carlo Alberto, Via Real Collegio 30, 10024
  Moncalieri, Italy,\\
\email{giovanni.pistone@carloalberto.org},\\ 
WWW home page: \texttt{http://www.giannidiorestino.it/index.html}}

\maketitle              

\begin{abstract} The differential-geometric structure of the set of positive densities on a given measure space has raised the interest of many mathematicians after the discovery by C.R. Rao of the geometric meaning of the Fisher information. Most of the research is focused on parametric statistical models. In series of papers by author and coworkers a particular version of the nonparametric case has been discussed. It consists of a minimalistic structure modeled according the theory of exponential families: given a reference density other densities are represented by the centered log likelihood which is an element of an Orlicz space. This mappings give a system of charts of a Banach manifold. It has been observed that, while the construction is natural, the practical applicability is limited by the technical difficulty to deal with such a class of Banach spaces. It has been suggested recently to replace the exponential function with other functions with similar behavior but polynomial growth at infinity in order to obtain more tractable Banach spaces, e.g. Hilbert spaces. We give first a review of our theory with special emphasis on the specific issues of the infinite dimensional setting. In a second part we discuss two specific topics, differential equations and the metric connection. The position of this line of research with respect to other approaches is briefly discussed.
\keywords{Information Geometry, Banach Manifold}
\end{abstract}
%


\section{Introduction}
In the present paper we follow closely the presentation of Information Geometry developed by S.-I. Amari and coworkers, see e.g. in \cite{amari:82}, \cite{amari:85}, \cite{amari:87ims}, \cite{amari|nagaoka:2000}, with the specification that we want to construct a Banach manifold structure in the classical sense, see e.g. \cite{bourbaki:71variete} or \cite{lang:1995}, without any restriction to parametric models. We feel that the non parametric approach is of interest even in the case of a finite state space. We build upon our previous work in this field, namely \cite{pistone|sempi:95}, \cite{pistone|rogantin:99}, \cite{gibilisco|pistone:98}, \cite{cena:2002}, \cite{cena|pistone:2007}, \cite{malago|matteucci|dalseno:2008}, \cite{imparato:thesis}, \cite{brigo|pistone:2009arXiv:0901.1308}, \cite{malago|pistone:1012.0637}, \cite{pistone:2009EPJB}, \cite{pistone:2009SL}, \cite{imparato|trivellato:2010},\cite{malago|matteucci|pistone:2011a}, \cite{malago|matteucci|pistone:2011b}, \cite{malago:2012thesis},\cite{malago|matteucci|pistone:2013CEC}. Other contributions are referred to in the text below. We do not discuss here the non commutative/quantum case as developed e.g., in \cite{gibilisco|isola:1999}, \cite{jencova:2006} and the review in \cite{gibilisco|riccomagno|rogantin|wynn:2010}.

The rest of this introductory section contains a review of relevant facts related with the topology of Orlicz spaces which are the model spaces in our manifold structure. The review part is based on previous joint work with M. P. Rogantin \cite{pistone|rogantin:99} and A. Cena \cite{cena|pistone:2007}, but a number of examples and remarks are added in order to clarify potential issues and possible applications. The exponential manifold (originally introduced in the joint work with C. Sempi \cite{pistone|sempi:95}) is critically reviewed in Sec. \ref{sec:ExponentialManifold}, together with applications. Differential equations are discussed in Sec. \ref{sec:diff-eq}, with examples. Sec. \ref{sec:hilbertbundle} deals with the Hilbert bundle of the exponential manifold and the computation of the metric derivative. It builds upon previous work on non parametric connections with P. Gibilisco  \cite{gibilisco|pistone:98}. A variation on exponential manifolds is introduced in Sec. \ref{cha:deform-expon-manif} to show it could be developed along the lines previously discussed.  

\subsection{Model spaces}
In this paper we consider a fixed $\sigma$-finite measure space $(\Omega, \mathcal F, \mu)$ and we denote by $\pdensities$ the set of all densities which are positive $\mu$-a.s. The set of densities, without any further restriction is $\densities$, while $\sdensities$ is the set of measurable functions $f$ with $\int f\ d\mu = 1$. In the finite state space case, i.e. $\#\Omega < \infty$, $\sdensities$ is a plane, $\densities$ is the simplex, $\pdensities$ its topological interior. In the infinite case, the setting is much more difficult: we concentrate here mainly on strictly positive densities and we construct its geometry by taking as a guiding model the theory of exponential families, see \cite{efron:1975}, \cite{barndorff-nielsen:78}, \cite{brown:86}, \cite{letac:92}. A non parametric approach we use was initially suggested by P. Dawid \cite{dawid:75,dawid:1977AS}. A geometry derived from exponential families is intrinsically bases on the positivity of the densities, see \cite{gzyl|recht:2006geometryI,gzyl|recht:2006geometryII}.

At each $p \in \pdensities$ we associate a set of densities of the form $q = \euler^{u-K} \cdot p$, where $u$ belongs to a suitable Banach space $B_p$ and $K$ is a constant depending on $p$ and $u$. The mapping $u \mapsto q$ will be one-to-one and its inverse $s_p \colon q \mapsto u$ will be a chart of our exponential manifold $\emanifold = (\pdensities, \set{s_p})$. As we do not have manifold structures on the set of positive densities other that the exponential one, in the following the manifold and the set are both denoted by $\pdensities$.

We refer to \cite[\S 5-7]{bourbaki:71variete} and \cite{lang:1995} for the theory of manifolds modeled on Banach spaces. According to this definition, a manifold is a set $\pdensities$ together with a collection or atlas of charts $s \colon \chartdom \to B$ from a subset $\chartdom \subset \pdensities$ to a Banach space $B$ such that for each couple of charts the transition maps $s'\circ s^{-1} \colon s(\chartdom\cap\chartdom') \to s'(\chartdom\cap\chartdom')$ are smooth functions from an open set of $B$ into an open set of $B'$. In this geometric approach, $\pdensities$ is a set, while all structure is in model spaces $B$.

It should be noted that that the Banach spaces are not required to be equal, as the finite dimensional case seems to suggest, but they should be isomorphic when connected by a chart. Actually this freedom is of much use in our application to statistical model, but requires a careful discussion of the isomorphism. Precisely, at each $p \in \pdensities$, the model space $B_p$ for our manifold is an Orlicz space of centered random variables, see \cite{krasnoselskii|rutickii:61}, \cite[Chapter II]{musielak:1983}, \cite{rao|ren:2002}, \cite[Ch 8]{adams|fournier:2003}. We review briefly our notations and recall some basic facts from these references. 

If both $\phi$ and $\phi^{-1}=\phi_*$ are monotone, continuous functions on $\nonnegreals$ onto itself, we call the pair
\begin{equation*}
  \Phi(x) = \int_0^{\absoluteval x} \phi(u) \ du, \quad \Phi_*(y) = \int_0^{\absoluteval y} \phi^{-1}(v) \ dv,
\end{equation*}
a \emph{Young pair}. Each Young pair satisfies the Young inequality $\absoluteval{xy} \le \Phi(x) + \Phi_*(y)$, with equality if, and only if, $y = \phi(x)$. The relation in a Young pair is symmetric and either element is called a Young function. 
\begin{example}[Young pairs]\label{ex:Youngpairs}
We will use the following Young pairs:
\begin{equation*}
  \begin{array}{l|c|c|c|c}
   & \phi & \phi^{-1}=\phi_* & \Phi & \Phi_* \\
\hline
\text{(a)} & \logof{1+u} & \euler^v - 1 & (1+\absoluteval x)\logof{1+\absoluteval x} - \absoluteval x & \euler^{\absoluteval y} - 1 - \absoluteval y \\
\text{(b)} & \sinh^{-1} u & \sinh v & \absoluteval x \sinh^{-1}\absoluteval x - \sqrt{1+x^2} + 1 & \cosh y - 1 \\ 
(c) & \log^+ u & \euler^v & \absoluteval x \log^+ \absoluteval x - (x -1)^+ & \euler^{\absoluteval y} -1 \\
(2) & u & v & \frac 12 x^2 & \frac12 y^2
  \end{array}
\end{equation*}
As $\logof{1+u} \le \sinh^{-1} u = \logof{u + \sqrt{1+u^2}} \le a \logof{1+u}$ if $u \ge 0$ and $a > 1$, the pairs (a) and (b) are equivalent i.e. $\Phi_{\text{a}} \le \Phi_{\text{b}} \le a\Phi_{\text{a}}$ if $a >1$. Moreover, from $\Phi_{\text{a}}(x) = \int_0^{x} (x-u)/(1+u)\ du$ if $x \ge 0$, we obtain an instance of the so called $\Delta_2$-condition,  $\Phi_{\text{a}}(ax) \le a^2 \Phi_{\text{a}}(x)$. This condition is not satisfied by $\Phi_{\text{a}*}$ as $\Phi_{\text{a}*}(2y)/\Phi_{\text{a}*}(y)$ is unbounded as $y \to \infty$. The listed pairs satisfy $\Phi_{\text{(a)}}(x) \le \Phi_{\text{(2)}} \le \Phi_{\text{(a)}*}$.
\end{example}

 In fact, around each $p$ we consider densities of the form $q \propto \euler^v$ for some random variable $v$ and, moreover, we require the one dimensional exponential family $q(t) \propto \euler^{tv}$ be defined for each $t$ in an open interval $I$ containing 0. In other words, we require the moment generating function  $t \mapsto \int \euler^{iv}p\ d\mu = \expectat p {\euler^{tv}}$ to be finite in a neighbourhood of 0. The set of such random variables $v$ is a vector space and a Banach space for a properly defined norm. We discuss below those pars of the theory which are relevant for the definition of exponential manifold.

If $\Phi(x) = \cosh x - 1$, a real random variable $u$ belongs to the vector space $L^{\Phi}(p)$ if  $\expectat p {\Phi(\alpha v)} < +\infty$ for some $\alpha > 0$. A norm is obtained by defining the set $\setof{v}{\expectat p {\Phi(\alpha v)} \le 1}$ to be the closed unit ball. It follows that the open unit ball consists of those $u$'s such that $\alpha u$ is in the closed unit ball for some $\alpha > 1$. The corresponding norm $\Vert \cdot \Vert_{\Phi, p}$ is called Luxemburg norm and defines a Banach space, see e.g. \cite[Th 7.7]{musielak:1983}. The function $\cosh -1$ has been chosen here because the condition $\expectat p {\Phi(\alpha v)} < +\infty$ is clearly equivalent to $\expectat p {\euler^{t v}} < +\infty$ for $ t \in [-\alpha,\alpha]$, but other choices will define the same Banach space e.g., $\Phi(x) = \euler^{|x|}-|x|-1$. By abuse of notation, we will denote all these equivalent functions by $\Phi$.

The main technical issue in working with Orlicz spaces such as $L^{(\cosh -1)}(p)$ is the regularity of its unit sphere $S = \setof{u}{\normat {(\cosh-1),p} u = 1}$. In fact, while $\expectat p {\cosh u - 1} = 1$ implies $u \in S$, the latter implies $\expectat p {\cosh u - 1} \le 1$. Subspaces of $L^{\Phi}$ where this cannot happen are called \emph{steep}. If the state space is finite, the full space is steep, see the Ex. \ref{ex:boolean} and \ref{ex:nonsteep} below. The relevance of steep families in exponential families is discussed in \cite{barndorff-nielsen:78}. Steepness is important when related with the idea of embedding. Consider the mapping $\Phi_+^{-1} \colon \pdensities \ni p \mapsto v = \Phi_+^{-1}(p)$, $\Phi_+=\Phi_{|\reals_>}$. Then $\int \Phi(v)\ d\mu = \int p \ d\mu = 1$ hence $\normat{\Phi} u = 1$ and we have an embedding of $\pdensities$ into the sphere of a Banach space.

\begin{example}[Boolean state space]\label{ex:boolean}
In the case of a finite state space, the moment generating function is finite everywhere, but its computation can be challenging. We discuss in particular the boolean case $\Omega=\set{+1,-1}^n$ with counting reference measure $\mu$ and uniform density $p(x)=2^{-n}$, $x\in \Omega$. In this case there is a huge literature from statistical physics, e.g., \cite[Ch. VII]{gallavotti:1999short}. A generic real function on $\Omega$---called in the machine learning literature pseudo-boolean \cite{boros|hammer:2002}---has the form $u(x)=\sum_{\alpha \in L} \hat u(\alpha) x^\alpha$, with $L = \set{0,1}^n$, $x^\alpha = \prod_{i=1}^n x_i^{\alpha_i}$, $\hat u(\alpha) = 2^{-n} \sum_{x \in \Omega} u(x) x^\alpha$. 

As $\euler^{a x} = \cosh(a) + \sinh(a) x$ if $x^2 = 1$ i.e. $x = \pm 1$, we have
 \begin{align*}
  \euler^{tu(x)} &= \expof{\sum_{\alpha \in \supp{\hat{u}}} t\hat{u}(\alpha) x^\alpha} \\
&= \prod_{\alpha \in \supp{\hat{u}}} \left(\cosh(t\hat{u}(\alpha)) + \sinh(t\hat{u}(\alpha)) x^\alpha \right) \\&= \sum_{B \subset \supp{\hat u}}\prod_{\alpha \in B^c} \cosh(t\hat{u}(\alpha)) \prod_{\alpha \in B} \sinh(t\hat{u}(\alpha)) x^{\sum_{\alpha \in B}\alpha}.
\end{align*}

The moment generating function of $u$ under the uniform density $p$ is
\begin{equation*}
  t \mapsto \sum_{B \in \mathcal B(\hat u)} \prod_{\alpha \in B^c} \cosh(t\hat u(\alpha)) \prod_{\alpha \in B} \sinh(t\hat u(\alpha)),
\end{equation*}
where $\mathcal B(\hat u)$ are those $B \subset \supp{\hat u}$ such that $\sum_{\alpha \in B} \alpha = 0 \mod 2$. We have
\begin{equation*}
  \expectat p \Phi(tu) =  \sum_{B \in \mathcal B_0(\hat u)} \prod_{\alpha \in B^c} \cosh(t\hat u(\alpha)) \prod_{\alpha \in B} \sinh(t\hat u(\alpha)) - 1,
\end{equation*}
where $\mathcal B_0(\hat u)$ are those $B \subset \supp{\hat u}$ such that $\sum_{\alpha \in B} \alpha = 0 \mod 2$ and moreover $\sum_{\alpha \in \supp{\hat u}} \alpha = 0$. 

If $S$ is the $\set{1,\dots,n}\times\supp{\hat u}$ matrix with elements $\alpha_i$ we want to solve the system $Sb = 0 \mod 2$ to find all elements of $\mathcal B$; we want to add the equation $\sum b = 0 \mod 2$ to find $\mathcal B_0$. The simplest example is the simple effect model $u(x) = \sum_{i=1}^n c_i x_i$.
\end{example}
\begin{example}[The sphere $S$ is not smooth]\label{ex:nonsteep}
We look for the moment generating function of the density $p(x) \propto (a+x)^{-\frac32} \euler^{-x}$, $x >0$, where $a > 0$. From the incomplete gamma integral
\begin{equation*}
\upgamma{-\frac12}x = \int_x^\infty s^{-\frac12-1} \euler^{-s}\ ds, \quad x > 0,
\end{equation*}
we have for $\theta, a > 0$,
\begin{equation*}
\derivby x \upgamma{-\frac12}{\theta(a+x)} = -\theta^{-\frac12}\euler^{-\theta a}(a+x)^{-\frac32} \euler^{-\theta x}.
\end{equation*}

We have, for $\theta \in \reals$ and $a > 0$
\begin{equation}\label{eq:upgamma-int}
 C(\theta,a) = \int_0^\infty (a+x)^{-\frac32} \euler^{-\theta x}\ dx =
 \begin{cases}
   \sqrt\theta \euler^{\theta a} \upgamma{-\frac12}{\theta a} & \text{if $\theta > 0$,} \\
\frac2{\sqrt a} & \text{if $\theta = 0$}, \\ +\infty & \text{if $\theta < 0$,} 
 \end{cases}
\end{equation}
or, using the Gamma distribution with shape $1/2$ and scale 1, $\upgamma{-\frac12}{x} = 2 x^{-1/2} \euler^{-x} - \sqrt \pi \left(1 - \Gamma(x; 1/2, 1)\right)$,
\begin{equation*}
 C(\theta,a) =
 \begin{cases}
   2 a^{-\frac12} - 2 \sqrt {\pi \theta} \euler^{\theta a} \left(1 - \Gamma(\theta a; 1/2, 1)\right) & \text{if $\theta \ge 0$,} \\ +\infty & \text{it $\theta < 0$.}  
 \end{cases}
\end{equation*}

The density $p$ is obtained from \eqref{eq:upgamma-int} with $\theta=1$,
\begin{equation*}
  p(x) = C(1,a)^{-1} (a+x)^{-\frac32} \euler^{-x} = \frac{(a+x)^{-\frac32} \euler^{-x}}{\euler^{a} \upgamma{-\frac12}{a}}, \quad x > o,
\end{equation*}
and, for the random variable $u(x) = x$, the function 
\begin{align}
 \alpha \mapsto \expectat p {\Phi(\alpha u)} &= \frac1{\euler^{a} \upgamma{-\frac12}{a}}\int_0^\infty  (a+x)^{-\frac32}\frac{\euler^{-(1-\alpha)x}+\euler^{-(1+\alpha)x}}2\ dx - 1 \notag \\
&= \frac{C(1-\alpha,a)+C(1+\alpha,a)}{2C(1,a)} - 1 \label{eq:nonsteep}
\end{align}
is convex lower semi-continuous on $\alpha \in \reals$, finite for $\alpha \in [-1,1]$, infinite otherwise, hence not steep. Its value at $\alpha = \pm1$ is
\begin{equation*}
  \expectat p {\Phi(u)} = \frac{C(0,a)+C(2,a)}{2C(1,a)} - 1 \quad \text{e.g., $=0.8037381$ if $a = \frac12$.}
\end{equation*}
\end{example}

If the functions $\Phi$ and $\Phi_*$ are Young pair, for each $u \in L^{\Phi}(p)$ and $v \in L^{\Phi_*}(p)$, such that $\normat {\Phi,p} u, \normat {\Phi_*,p} v \le 1$, we have $\expectat p {uv} \le 2$, hence
\begin{equation*}
  L^{\Phi_*}(p) \times L^{\Phi}(p) \ni (v,u) \mapsto \expectat p {uv}
\end{equation*}
is a duality mapping, $\absoluteval{\scalarat p u v} \le 2 \normat {\Phi_*,p} u \normat {\Phi,p} v$ . 

A sequence $u_n$, $n=1,2,\dots$ is convergent to 0 for such a norm if and only if for all $\epsilon > 0$ there exists  a $n(\epsilon)$ such that $n > n(\epsilon)$ implies $\expectat p {\Phi_1(\frac{u_n}{\epsilon})} \le 1$. Note that $|u|\le |v|$ implies
\begin{equation*}
  \expectat p {\Phi_1\left(\frac{u}{\Vert v \Vert_{\Phi_1, p}}\right)} \le   \expectat p {\Phi_1\left(\frac{v}{\Vert v \Vert_{\Phi_1, p}}\right)} \le 1
\end{equation*}
so that $\normat {\Phi_1, p} u \le \normat {\Phi_1, p} v$. 

In defining our manifold, we need to show that Orlicz spaces defined at different points of statistical models are isomorphic, we will use frequently the fact that following lemma, see \cite[Lemma 1]{cena|pistone:2007}.
\begin{lemma}\label{lemma:eq-norm}
 Let $p\in\mathcal M$ and let $\Phi_0$ be a Young function. If the Orlicz spaces $L^{\Phi_0}(p)$ and $L^{\Phi_0}(q)$ are equal as sets, then their norms are equivalent. 
\end{lemma}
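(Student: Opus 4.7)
The natural strategy is to realize the identity mapping
\begin{equation*}
I \colon \left(L^{\Phi_0}(p), \normat{\Phi_0,p}{\cdot}\right) \longrightarrow \left(L^{\Phi_0}(q), \normat{\Phi_0,q}{\cdot}\right)
\end{equation*}
as a linear bijection between two Banach spaces and apply the closed graph theorem. The hypothesis that the two Orlicz spaces coincide as sets is precisely what makes $I$ well defined and bijective, so once continuity is established the open mapping theorem gives continuity of $I^{-1}$ as well, and the two norms are equivalent.

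The core of the argument is verifying the closed graph property: if $u_n \to u$ in $\normat{\Phi_0,p}{\cdot}$ and simultaneously $u_n \to v$ in $\normat{\Phi_0,q}{\cdot}$, then $u = v$ $\mu$-a.e. For this I would use the standard fact that convergence in Luxemburg norm implies convergence in the underlying probability measure: from the Markov-type estimate
\begin{equation*}
p\cdot\mu\left(\absoluteval{u_n - u} > \delta\right) \le \frac{1}{\Phi_0(\delta/\epsilon)}\expectat{p}{\Phi_0\!\left(\frac{u_n - u}{\epsilon}\right)},
\end{equation*}
valid for $\epsilon$ such that $\normat{\Phi_0,p}{u_n-u} < \epsilon$, one reads off that $u_n \to u$ in $p\cdot\mu$-measure. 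Since $p$ (and likewise $q$) is strictly positive $\mu$-a.s., this is the same as convergence in $\mu$-measure on $\Omega$. Extracting a $\mu$-a.s. convergent subsequence on each side then forces $u = v$ $\mu$-a.s., which is the closed graph condition.

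Having applied the closed graph theorem, $I$ is bounded, so there is $C > 0$ with $\normat{\Phi_0,q}{u} \le C\normat{\Phi_0,p}{u}$. Swapping the roles of $p$ and $q$ (the argument is perfectly symmetric), we obtain the reverse bound, giving equivalence of the two norms. Alternatively, once $I$ is known to be bounded and bijective between Banach spaces, the bounded inverse theorem supplies the opposite estimate directly.

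The main technical point, and essentially the only nonroutine step, is the passage from Luxemburg convergence to convergence in $\mu$-measure via the Markov inequality; this is where the positivity of $p$ and $q$ is used, and it guarantees that the two limits obtained in the two different Banach topologies must agree pointwise almost everywhere. Everything else is an application of standard Banach space machinery.
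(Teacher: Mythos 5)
Your proof is correct and follows essentially the same route as the paper's source for this lemma (Lemma~1 of Cena and Pistone, 2007, which the text cites rather than reproves): the identity map between the two Banach spaces has closed graph because Luxemburg-norm convergence forces convergence in $p\cdot\mu$-measure, hence $\mu$-a.e.\ convergence along a subsequence, so the two limits agree, and the closed graph and bounded inverse theorems then yield equivalence of the norms. One small imprecision: convergence in $p\cdot\mu$-measure need not coincide with convergence in $\mu$-measure when $\mu$ is merely $\sigma$-finite (the derivative $1/p$ may be unbounded), but your argument only needs a $\mu$-a.e.\ convergent subsequence, which the equivalence of the null sets of $\mu$ and of the probability measure $p\cdot\mu$ already provides.
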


The condition $u \in L^{\cosh-1}(p)$ is equivalent to the existence of the moment generating function $g(t) = \expectat p {\euler^{tu}}$ on a neighbourhoods of 0. The case when such a moment generating function is defined on all of the real line is special and defines a notable subspace of the Orlicz space see e.g., \cite{rao|ren:2002}. Such spaces could be the model of an alternative definition of as in \cite{grasselli:2001}.

In fact, the Banach space $L^{\Phi}(p)$, $\phi=\cosh -1$ is not separable, unless the basic space has a finite number of atoms. In this sense it is an unusual choice from the point of view of functional analysis and manifold's theory. However, $L^{\Phi}(p)$ is natural for statistics because for each $u \in L^{\Phi_1}(p)$ the Laplace transform of $u$ is well defined at 0, then the one-dimensional exponential model $p(\theta) \propto \euler^{\theta u}$ is well defined.  

However, the space $L^{\Phi_*}(p)$ is separable and its dual space is $L^{\Phi}(p)$, the duality pairing being $(u,v) \mapsto \expectat p {uv}$. This duality extends to a continuous chain of spaces:
\begin{equation*}
  L^{\Phi_1}(p) \to L^a(p) \to L^b(p) \to L^{\Psi_1}(p), \quad 1 < b \le 2, \quad \frac1a+\frac1b=1
\end{equation*}
where $\to$ denotes continuous injection.

From the duality pairing of conjugate Orlicz spaces and the characterization of the closed unit ball it follows a definition of dual norm on $L^{\Phi_*}(p)$:
\begin{equation*}
  N_p(v) = \sup\setof{\expectat p {uv}}{\expectat p {\Phi(u)} \le 1}.
\end{equation*}
\subsection{Moment generating functional and cumulant generating functional}
In this section we review a number of key technical results. Most of the results are related with the smoothness of the superposition operator $L^\Phi(p) \colon v \mapsto \exp \circ v$. Superposition operators on Orlicz spaces are discussed e.g. in \cite{krasnoselskii|rutickii:61} and \cite[Ch 4]{appell|zabrejko:1990}. Banach analytic functions are discussed in \cite{bourbaki:71variete}, \cite{upmeier:1985} and \cite{ambrosetti|prodi:1993}.

Let $p \in \mathcal \pdensities$ be given. The following theorem has been proved in \cite[Ch 2]{cena:2002}, see also \cite{cena|pistone:2007}.
\begin{proposition}\label{prop:expisanalytic}\ 
\begin{enumerate}
\item
For $a \geq 1$, $n = 0, 1, \dots$ and $u \in L^\Phi(p)$,
\begin{equation*}
 \lambda_{a,n}(u) \colon \left(w_1, \dots, w_n \right)  \mapsto  \dfrac{w_1}{a} \cdots \dfrac{w_n}{a}\ \euler^{\frac ua}
\end{equation*}
is a continuous, symmetric, $n$-multi-linear map from $L^\Phi(p)$ to $L^{a}\left(p\right)$.%
\item
$v \mapsto \sum_{n = 0}^\infty \frac{1}{n!} \left(\dfrac{v}{a}\right)^n$ is a power series from $L^{\Phi}(p)$ to $L^a(p)$ with radius of convergence $\geq 1$.
\item The superposition mapping $ v \mapsto \euler^{v/a}$ is an analytic function from the open unit ball of $L^\Phi(p)$ to $L^a(p)$.
\end{enumerate}
\end{proposition}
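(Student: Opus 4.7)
The plan is to reduce everything to the continuous scale of embeddings $L^{\Phi}(p) \hookrightarrow L^{q}(p)$, valid for every finite $q \ge 1$, together with the multilinear Hölder inequality. The embedding is obtained by replacing $\Phi = \cosh - 1$ with the equivalent Young function $x \mapsto \euler^{|x|} - |x| - 1 = \sum_{k \ge 2} |x|^k/k!$: the inequality $\expectat p {|v|^k/k!} \le \expectat p {\Phi(v)} \le 1$ whenever $\normat {\Phi,p} v \le 1$ gives $\normat {k,p} v \le (k!)^{1/k} \normat {\Phi,p} v$ after rescaling. Part 1 then follows quickly. Multilinearity and symmetry of $\lambda_{a,n}(u)$ are immediate from the formula, and for continuity in $(w_1, \ldots, w_n)$ one fixes $u$ with $\normat {\Phi,p} u < 1$, picks $\beta > 1$ with $\expectat p {\euler^{\beta u}} < \infty$ so that $\euler^{u/a} \in L^{a\beta}(p)$, and chooses a large finite $q$ with $n/q + 1/(a\beta) = 1/a$; then multilinear Hölder combined with the embedding bounds $\normat {a,p} {\lambda_{a,n}(u)(w_1, \ldots, w_n)}$ by a constant depending on $u, a, n$ times $\prod_i \normat {\Phi,p} {w_i}$.

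For part 2 I would specialise to $u = 0$ and evaluate on the diagonal: since $\normat {a,p} {(v/a)^n/n!} = \normat {na,p} v^n / (n!\, a^n)$, the embedding above yields the estimate $\normat {a,p} {(v/a)^n/n!} \le ((na)!)^{1/a}\, \normat {\Phi,p} v^n / (n!\, a^n)$, and Stirling's formula shows that $((na)!)^{1/a}/(n!\, a^n) \sim 1/\sqrt{2\pi n}$, which grows subexponentially in $n$. The root test then yields radius of convergence at least 1. Part 3 follows by combining parts 1 and 2: on the open unit ball the partial sums $\sum_{k=0}^{N}(v/a)^k/k!$ converge in $L^a(p)$ by the estimate of part 2, while a pointwise-a.e.\ subsequence converges to $\euler^{v/a}$, identifying the sum of the series with the superposition operator. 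Analyticity in the sense of Bourbaki then holds by definition, since a convergent power series on an open ball inside its radius of convergence defines an analytic map.

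The main obstacle is bookkeeping rather than conceptual: the Hölder exponents in part 1 must be chosen so that the target space is exactly $L^a(p)$, and the Stirling asymptotics in part 2 must be controlled sharply enough to yield radius $\ge 1$ rather than some smaller number. A more subtle point is that part 1 really requires $\normat {\Phi,p} u < 1$ rather than merely $u \in L^{\Phi}(p)$, since otherwise $\euler^{u/a}$ need not belong to any $L^{r}(p)$ with $r \ge a$; this restriction is compatible with part 3, whose natural domain is the open unit ball, and matches the way the proposition will be applied to define the charts $s_p$ of the exponential manifold.
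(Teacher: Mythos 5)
Your argument is correct and is essentially the argument behind this proposition: the paper itself gives no proof here (it defers to Cena (2002) and Cena--Pistone (2007)), and the proof in those sources runs along exactly your lines --- the embedding of $L^{\Phi}(p)$ into every $L^q(p)$ with a constant of order $(q!)^{1/q}$, the generalized H\"older inequality to bound $\normat{a,p}{w_1\cdots w_n\,\euler^{u/a}}$ for item 1, and factorial asymptotics plus the root test for item 2. Your remark that item 1 cannot hold for arbitrary $u\in L^{\Phi}(p)$, but needs $u$ in the open unit ball so that $\expectat p{\euler^{\beta u}}<\infty$ for some $\beta>1$, is a genuine and correct catch: the counterexample is already implicit in the paper's own non-steep Example 3, and the hypothesis in the cited source is indeed the open unit ball, so the version printed here is a slip rather than a different theorem. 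One small inaccuracy: the asymptotic $((na)!)^{1/a}/(n!\,a^n)\sim 1/\sqrt{2\pi n}$ is not right as stated (for $a=1$ the quantity equals $1$; for $a>1$ it decays like a power of $n$ depending on $a$), but all your argument uses is that this sequence is subexponential in $n$, which Stirling does give, so the conclusion that the radius of convergence is at least $1$ stands.
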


\begin{definition}
Let $\Phi = \cosh -1$ and $B_p = L^\Phi_0(p)$, $p \in \pdensities$. The \emph{moment generating functional} is $M_p  \colon L^{\Phi}(p) \ni u \mapsto \expectat p {\euler^u} \in \reals_> \cup \set {+\infty}$. The \emph{cumulant generating functional} is $K_p  \colon B_p \ni u \mapsto \log M_p(u) \in \reals_> \cup \set{+\infty}$.
\end{definition}
\begin{proposition} \label{thm:moment_analytic}\ 
\begin{enumerate}
\item $M_p (0) = 1$; otherwise, for each centered random variable $u \neq 0$, $M_p (u) > 1$.
\item $M_p$ is convex and lower semi-continuous, and its proper domain is a convex set which contains the open unit ball of $L^{\Phi}(p  )$; in particular the interior of such a domain is a non empty convex set.
\item $M_p$ is infinitely G\^ateaux-differentiable in the interior of its proper domain, the nth-derivative at $u$ in the direction $v \in L^{\Phi}(p)$ being
\begin{equation*}
  \left. \frac{d^n}{dt^n} M_p (u + tv) \right|_{t=0} = \expectat p {v^n \euler^u};
\end{equation*}
\item $M_p$ is bounded, infinitely Fr\'echet-differentiable and analytic on the open unit ball 
of $L^{\Phi}(p  )$, the nth-derivative at $u$ evaluated in $(v_1, \dots, v_n) \in L^{\Phi}(p  ) \times \cdots \times L^{\Phi}(p  )$ is 
\begin{equation*}
  D^n M_p (u) (v_1, \dots, v_n) = \expectat p {v_1 \cdots v_n \euler^u}.
\end{equation*}
\end{enumerate}
\end{proposition}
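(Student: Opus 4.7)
The plan is to dispatch the four items in order, with the heavy lifting for (4) coming from Proposition \ref{prop:expisanalytic}.

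Item (1) is immediate: $M_p(0) = \expectat{p}{1} = 1$, and for a centered $u \neq 0$, strict Jensen's inequality applied to the strictly convex function $x \mapsto \euler^x$ gives $M_p(u) > \euler^{\expectat{p}{u}} = 1$. For item (2), convexity of $M_p$ is inherited pointwise from the convexity of $\exp$, and lower semicontinuity follows by applying Fatou's lemma along an a.e.-convergent subsequence of any norm-convergent sequence (such a subsequence exists since $L^\Phi$-convergence implies convergence in measure). To see that the proper domain contains the open unit ball of $L^\Phi(p)$, pick $u$ with $\normat{\Phi,p}{u} < 1$ and choose $\alpha > 1$ with $\expectat{p}{\cosh(\alpha u) - 1} \le 1$; then $\euler^u \le 2\cosh(u) \le 2\cosh(\alpha u)$ yields $M_p(u) \le 4$, and the interior of this convex set is consequently nonempty.

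For item (3), I would prove the $n$-th Gâteaux derivative formula by induction on $n$, differentiating under the integral via dominated convergence. At an interior point $u$ of the proper domain there exists $\epsilon > 0$ with both $u + \epsilon v$ and $u - \epsilon v$ in the domain, so $\euler^{u \pm \epsilon v} \in L^1(p)$. Combining the elementary bound $|v|^n \le (n!/\delta^n)\euler^{\delta|v|}$ with $\euler^{\delta|v|} \le \euler^{\delta v} + \euler^{-\delta v}$ gives, for $\delta$ just below $\epsilon$ and $|t|$ small, an integrable majorant of $|v|^n \euler^{u+tv}$. The induction step then yields $\frac{d^n}{dt^n} M_p(u + tv)\big|_{t=0} = \expectat{p}{v^n \euler^u}$.

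Item (4) is a direct consequence of Proposition \ref{prop:expisanalytic} with $a = 1$: that proposition exhibits $v \mapsto \euler^v$ as an analytic map from the open unit ball of $L^\Phi(p)$ into $L^1(p)$, whose $n$-th derivative at $u$ is the continuous symmetric $n$-multilinear map $(v_1, \dots, v_n) \mapsto v_1 \cdots v_n \euler^u$. Postcomposing with the continuous linear functional $\expectat{p}{\cdot}\colon L^1(p) \to \reals$ preserves analyticity, commutes with differentiation, and gives the Fréchet derivative formula together with boundedness on the open unit ball. The main obstacle is (3): the interior of the proper domain can be genuinely larger than the open unit ball, so Proposition \ref{prop:expisanalytic} does not apply there and the dominating functions must be produced by hand with the Young/Orlicz-type bounds sketched above.
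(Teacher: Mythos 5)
Your proposal is correct and follows essentially the argument the paper intends: the paper itself defers the detailed proof to \cite{cena:2002} and \cite{cena|pistone:2007}, where items (1)--(3) are handled exactly by Jensen, pointwise convexity of $\exp$ plus Fatou, the Luxemburg-norm bound $\euler^u\le 2\cosh(\alpha u)$, and differentiation under the integral with the majorant $\absoluteval{v}^n\le (n!/\delta^n)\euler^{\delta\absoluteval v}$, while item (4) is obtained, as you do, by composing the analytic superposition map of Prop.~\ref{prop:expisanalytic} (with $a=1$) with the continuous linear functional $\expectat p{\cdot}$. Your closing remark that the interior of the proper domain may exceed the open unit ball, so that (3) genuinely requires the hand-built dominated-convergence argument rather than Prop.~\ref{prop:expisanalytic}, is precisely the right distinction to draw.
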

\begin{proposition} \label{cor:cumulant}\ 
\begin{enumerate}
\item $K_p (0) = 0$; otherwise, for each $u \neq 0$, $K_p (u) > 0$.
\item $K_p$ is convex and lower semi-continuous, and its proper domain is a convex set which contains the open unit ball of $\Bspace p$; in particular the interior of such a domain is a non empty convex set.
\item $K_p$ is infinitely G\^ateaux-differentiable in the interior of its proper domain.
\item $K_p$ is bounded, infinitely Fr\'echet-differentiable and analytic on the open unit ball 
of $\mathcal V_p$.
\end{enumerate}
\end{proposition}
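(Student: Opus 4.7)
The plan is to derive every item of the proposition from the corresponding item of Proposition \ref{thm:moment_analytic} by writing $K_p = \log \circ M_p$ and exploiting that $\log$ is real-analytic on $\posreals$, together with the fact that on the centered subspace $\Bspace p = L^\Phi_0(p)$ Jensen's inequality forces $M_p \ge 1$. Throughout, observe that wherever $M_p(u) < +\infty$ one automatically has $M_p(u) > 0$ (because $\euler^u > 0$ and $p > 0$ a.s.), so $K_p$ is well defined on the proper domain of $M_p$.

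For item (1), $K_p(0) = \log M_p(0) = \log 1 = 0$, and for a centered $u \neq 0$ strict Jensen gives $M_p(u) = \expectat p {\euler^u} > \euler^{\expectat p u} = 1$, hence $K_p(u) > 0$. For item (2), convexity follows from H\"older's inequality: for $u_0,u_1$ in the proper domain and $\theta\in[0,1]$,
\begin{equation*}
   M_p((1-\theta)u_0+\theta u_1) \le M_p(u_0)^{1-\theta} M_p(u_1)^{\theta},
\end{equation*}
so taking $\log$ yields convexity of $K_p$; lower semi-continuity transfers from that of $M_p$ because $\log$ is continuous and monotone on $\posreals\cup\set{+\infty}$. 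Since the proper domain of $K_p$ coincides with that of $M_p$, item (2) of Proposition \ref{thm:moment_analytic} shows it contains the open unit ball of $\Bspace p$ and has nonempty interior.

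For items (3) and (4) I invoke the chain rule and composition of analytic maps. In the interior of the proper domain $M_p$ is strictly positive and infinitely G\^ateaux differentiable, while $\log$ is $C^\infty$ on $\posreals$; thus $K_p$ is infinitely G\^ateaux differentiable with first derivative
\begin{equation*}
  \dirderat {K_p} u v = \frac{\expectat p {v\,\euler^u}}{\expectat p {\euler^u}},
\end{equation*}
higher derivatives being obtained by Fa\`a di Bruno's formula. On the open unit ball of $\Bspace p$, Proposition \ref{thm:moment_analytic}(4) gives boundedness and analyticity of $M_p$ as a map into $\reals$, while $\log$ is real-analytic on a neighbourhood of the bounded range $[1,\sup_{\normat{\Phi,p} u<1} M_p(u)]\subset\posreals$. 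Composition of analytic maps between Banach spaces (see, e.g., \cite{upmeier:1985}) then yields Fr\'echet differentiability and analyticity of $K_p$, and boundedness follows from $0\le K_p(u) \le \log \sup_{\normat{\Phi,p} u<1} M_p(u) < \infty$.

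The only delicate step is really the passage from G\^ateaux to Fr\'echet differentiability together with analyticity, and this is handled not by a direct computation but by appealing to the analyticity of $M_p$ on the open unit ball established in Proposition \ref{thm:moment_analytic}(4); everything else is either Jensen, H\"older, or the chain rule applied to the monotone analytic function $\log$.
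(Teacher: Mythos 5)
Your proposal is correct and follows essentially the same route as the paper, which states this proposition as a direct consequence of Proposition \ref{thm:moment_analytic} via the composition $K_p=\log\circ M_p$ (with the proof deferred to the cited references of Cena and Cena--Pistone): Jensen for item (1), log-convexity of $M_p$ via H\"older for item (2), and the chain rule together with composition of analytic maps for items (3) and (4). The one point worth noting is that you correctly avoid the trap of deducing convexity of $K_p$ from convexity of $M_p$ (which would fail since $\log$ is concave) and instead use the H\"older inequality, exactly as the standard argument requires.
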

Other properties of the key functional $K_p$ are described below as they relate directly to the exponential manifold.
\subsection{Families of Orlicz spaces}
In statistical models, we associate to each density $p$ a space of $p$-centered random variables to represent scores or estimating functions. For example, if the one-parameter statistical model $p(t)$, $t \in I$, $I$ open interval, is regular enough, then $u(t) = \frac{d}{dt} \log p(t)$ satisfies $\expectat{p(t)}{u(t)} = 0$ for all $t \in I$. It is crucial to discuss how the relevant spaces of $p$-centered random variables depend on the variation of the density $p$, that is it is crucial to understand the variation of the spaces $B_p = L_0^{\Phi}(p)$ and $\mBspace  p = L^{\Phi_*}_0(p)$ along a one-dimensional statistical model $p(t)$, $t\in I$. In Information Geometry, those spaces contain models for the tangent and cotangent spaces of the statistical models. On two different points of a regular model, they must be isomorphic, or, in particular, equal.

We use a peculiar notion of connection by arcs, which is different from what is usually meant with this name. Given $p, q \in \pdensities$, the exponential model $p\left(\theta\right) \propto p^{1-\theta}q^\theta$, $0 \le \theta \le 1$ connects the two given densities as end points of a curve, $p(\theta)\propto \exp\left(\theta \log \frac qp \right) \cdot p$, where $\log \frac qp$ is not in the exponential Orlicz space at $p$ unless $\theta$ can be extended to assume negative values. 
\begin{definition} \label{def:open-connected} We say that $p, q \in
  \pdensities$ are connected by an open exponential arc if there
  exist $r \in \pdensities$ and an open interval $I$, such that $p\left( t \right)
  \propto \euler^{tu} r$, $t \in I$, is an exponential model containing
  both $p$ and $q$ at $t_0, t_1$ respectively. By the change of parameter $s = t - t_0$, we can always reduce to the case where $r=p$ and $u \in L^{\Phi}(p)$.
\end{definition}

The open connection of Def. \ref{def:open-connected} is an  equivalence relation.

\begin{definition}\label{def:maximalexpmod}
Let us denote by $\mathcal S_p$ the interior of the proper domain of the cumulant generating functional $K_p$. For every density $p \in \pdensities$, the \emph{maximal exponential model at $p$} is defined to be the family of densities
\begin{equation*}
  \mathcal{E}\left(p\right) := 
  \setof{\euler^{u - K_p \left(u\right)}\cdot p}{ u \in \mathcal S_p}.
\end{equation*}
\end{definition}
\begin{proposition} \label{th:22} 
  The following statements are equivalent:
  \begin{enumerate}
  \item \label{th:22-2} $q \in \mathcal M$ is connected to $p$ by an open exponential arc;
  \item \label{th:22-1} $q \in \mathcal E(p)$;
  \item \label{th:22-4} $\mathcal E(p) = \mathcal E(q)$;
  \item \label{th:22-3} $\log \frac q p$ belongs to both $L^{\Phi_1}(p)$ and $L^{\Phi_1}(q)$.
  \item \label{th:22-5} $L^{\Phi_1}(p)$ and $L^{\Phi_1}(q)$ are equal as vector spaces and their norms are equivalent.
\end{enumerate}
\end{proposition}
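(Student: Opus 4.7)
The plan is to prove $(1)\Leftrightarrow(2)$, $(2)\Leftrightarrow(3)$, $(2)\Leftrightarrow(4)$, and $(4)\Leftrightarrow(5)$, which together yield the full equivalence. The first is a translation between the definitions: an open arc $p(t)\propto\euler^{tu}p$ defined on an open interval $I\ni 0,1$ is the same datum as $u\in\Bspace p$ with $tu\in\mathcal{S}_p$ for $t\in I$, since $\mathcal{S}_p$ is the Banach interior of a convex set containing the unit ball of $\Bspace p$; in particular any $u\in\domainof{K_p}$ with $t^*u\in\domainof{K_p}$ for some $t^*>1$ lies in $\mathcal{S}_p$ by taking $u=(1/t^*)(t^*u)+(1-1/t^*)\cdot 0$ and perturbing. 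The implication $(3)\Rightarrow(2)$ is immediate, since $q\in\maxexp q=\maxexp p$.

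For $(2)\Rightarrow(3)$: given $q=\euler^{u-K_p(u)}p$ with $u\in\mathcal{S}_p$ and arbitrary $r=\euler^{v-K_p(v)}p\in\maxexp p$, I write $r=\euler^{w_0}q$ with $w_0=v-u+K_p(u)-K_p(v)$ and use the one-line computation
\begin{equation*}
  \expectat q{\euler^{tw_0}}=\expof{t(K_p(u)-K_p(v))-K_p(u)}\,\expectat p{\euler^{(1-t)u+tv}},
\end{equation*}
which is finite precisely when $(1-t)u+tv\in\domainof{K_p}$. Openness of $\mathcal{S}_p$ at both $u$ and $v$ together with convexity of $\mathcal{S}_p$ supplies finiteness on an open $t$-interval containing $[0,1]$; the same convexity/interior argument as in the first paragraph then places the centered version of $w_0$ inside $\mathcal{S}_q$, so $r\in\maxexp q$, and swapping roles of $p$ and $q$ yields $\maxexp p=\maxexp q$.

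For $(2)\Leftrightarrow(4)$ and $(4)\Rightarrow(5)$: in $(2)\Rightarrow(4)$, $\log(q/p)=u-K_p(u)\in L^{\Phi_1}(p)$ is obvious; and $\expectat q{\euler^{su}}=\euler^{-K_p(u)}\expectat p{\euler^{(1+s)u}}$ is finite for $s$ near $0$ by openness of $\mathcal{S}_p$ at $u$, so $u\in L^{\Phi_1}(q)$. Conversely, for $(4)\Rightarrow(2)$, set $u=w-\expectat p w$ with $w=\log(q/p)$: finiteness of $\expectat p{\euler^{tw}}$ in neighbourhoods of $t=0$ (from $w\in L^{\Phi_1}(p)$) and of $t=1$ (from $w\in L^{\Phi_1}(q)$, via $\expectat q{\euler^{tw}}=\expectat p{\euler^{(1+t)w}}$) combines by convexity to finiteness on an open interval around $[0,1]$, placing $u$ in $\mathcal{S}_p$ and giving $q=\euler^{u-K_p(u)}p$. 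For $(4)\Rightarrow(5)$, H\"older applied to $\expectat q{\euler^{\delta v}}=\expectat p{\euler^{w+\delta v}}\le\expectat p{\euler^{sw}}^{1/s}\expectat p{\euler^{r\delta v}}^{1/r}$ with $s>1$ just above $1$ (so $sw$ remains in the interval from $(4)$) and $\delta$ correspondingly small shows $L^{\Phi_1}(p)\subseteq L^{\Phi_1}(q)$; symmetry and Lemma~\ref{lemma:eq-norm} complete the norm equivalence.

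The hard direction is $(5)\Rightarrow(4)$, since the hypothesis is a pure Banach-space statement from which one must recover pointwise/moment information about $q/p$. My plan is to exploit the fact that the Morse subspace of $L^{\Phi_1}(p)$ (the closure of bounded functions) has dual isometric to $L^{\Phi_{1*}}(p)$ even though $\Phi_1$ itself violates $\Delta_2$. The functional $f\mapsto\expectat q f=\expectat p{f(q/p)}$ is bounded on $L^{\Phi_1}(p)=L^{\Phi_1}(q)$ (since $1\in L^{\Phi_{1*}}(q)$ and the norms are equivalent), and restricting it to the Morse subspace identifies $q/p$ as an element of $L^{\Phi_{1*}}(p)$; the symmetric argument yields $p/q\in L^{\Phi_{1*}}(q)$. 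These are $L\log L$-type conditions, and the decisive step---which is the real technical obstacle---is to bootstrap them into $L^{1+\epsilon}$-integrability of $q/p$ under $p$ and of $p/q$ under $q$, using the joint $(p,q)$-symmetry and iterated Young-type inequalities. Once this is in hand, a direct H\"older computation gives finiteness of $\expectat p{\euler^{tw}}$ on an open interval containing $[0,1]$, which recovers $(4)$.
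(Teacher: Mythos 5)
The paper does not actually prove Proposition~\ref{th:22} in the text --- it is imported from \cite{cena|pistone:2007} --- so your argument must be measured against that reference. Your handling of the implications among (1)--(4) and of $(4)\Rightarrow(5)$ is correct and matches the standard proof: the key devices are exactly the ones you use, namely the convexity/interior observation (if $t^*u\in\domainof{K_p}$ for some $t^*>1$, then $u\in\mathcal S_p$ because $0$ is an interior point of the domain), the identity $\expectat q{\euler^{tw}}=\expectat p{\euler^{(1+t)w}}$ to interpolate finiteness of $t\mapsto\expectat p{\euler^{tw}}$ onto an open interval containing $[0,1]$, and the H\"older estimate plus Lemma~\ref{lemma:eq-norm} for $(4)\Rightarrow(5)$.

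The gap is in $(5)\Rightarrow(4)$, and it is not a missing technicality: the route you sketch cannot be completed. From the boundedness of $f\mapsto\expectat q f$ on $L^{\Phi_1}(p)=L^{\Phi_1}(q)$ and duality with the Morse subspace you extract only $q/p\in L^{\Phi_*}(p)$ and $p/q\in L^{\Phi_*}(q)$, i.e.\ finiteness of both divergences $D(q\Vert p)$ and $D(p\Vert q)$. That is strictly weaker than (4): condition (4) forces $\expectat p{(q/p)^{1+\epsilon}}<\infty$ for some $\epsilon>0$, whereas one can exhibit $p,q$ (e.g.\ $p\equiv 1$ and $q(x)\propto x^{-1}(\log x)^{-3}$ on $(0,1)$) with both divergences finite but $q/p\notin L^{1+\epsilon}(p)$ for every $\epsilon>0$. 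Hence no ``bootstrap'' from the two $L\log L$ conditions alone can produce the needed $L^{1+\epsilon}$ bounds; the essential content of hypothesis (5) is discarded at your first step. The argument that works applies (5) directly to $w=\log(q/p)$ rather than to the evaluation functional: with $w^+=\log^+(q/p)$ one has
\begin{equation*}
  \expectat p{\cosh(w^+)-1}\le \tfrac12\expectat p{(q/p)\mathbf 1_{q>p}}+\tfrac12\expectat p{(p/q)\mathbf 1_{q>p}}\le 1,
\end{equation*}
so $\normat{\Phi_1,p}{w^+}\le 1$; the norm equivalence in (5) gives $\normat{\Phi_1,q}{w^+}\le C$, hence $\expectat q{\euler^{w^+/C'}}<\infty$ for $C'>C$, which is precisely $\expectat p{(q/p)^{1+1/C'}\mathbf 1_{q>p}}<\infty$. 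The symmetric estimate on $\log^+(p/q)$ gives $p/q\in L^{1+\epsilon}(q)$, and from these two moment bounds (4) follows by the same interpolation you already carried out. You should replace the duality step by this direct application of the norm comparison to $\log(q/p)$.
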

In the following proposition we have collected a number of properties of the maximal exponential model $\mathcal E(p)$ which are relevant for its manifold structure.
\begin{proposition}\label{pr:misc}\ 
  Assume $q = \euler^{u - K_p(u)} \cdot p \in \maxexp p$.
  \begin{enumerate}
\item \label{item:firsttwo} The first two derivatives of $K_p$ on $\mathcal S_p$ are
\begin{align*}
D K_p(u) v &= \expectat q v,  \\
D^2 K_p(u)(v_1, v_2) &= \covat q {v_1}{v_2}
\end{align*}
\item The random variable $\frac q p -1$ belongs to $\mBspace p$ and
\begin{equation*}
  D {K_p(u)} v = \expectat p {\left( \frac q p - 1\right) v}.
\end{equation*}
In other words the gradient of $K_p$ at $u$ is identified with an element of $\mBspace p$, denoted by $\nabla K_p(u) = e^{u - K_p(u)} - 1=\frac q p -1$.
\item
The mapping $B_p \ni u\mapsto \nabla K_p(u) \in \mBspace p$ is monotonic, in particular one-to-one.
\item \label{item:weakderiv}
The weak derivative of the map $\mathcal S_p \ni u\mapsto \nabla K_p(u)
  \in \mBspace p$ at $u$ applied to $w \in B_p$ is given by
\begin{equation*}
  D (\nabla K_p(u)){w}= \frac q p \left( w- \expectat q w
  \right),
\end{equation*}
and it is one-to-one at each point.

\item \label{pr:misc-upq} The mapping $\mtransport pq : v \mapsto \frac pq \, v$ is an isomorphism of $\mBspace p$ onto $\mBspace q$.
\item \label{pr:misc-1} $q/p \in L^{\Phi_*}(p)$.
  \item \label{pr:misc-2} $D \left(q \Vert p \right) = D K_p(u)  u - K_p(u)$ with $q=\euler^{u-K_p(u)}p$,  in particular $- D(q \, \| \,  p) < +\infty$. 
  \item  \label{pr:misc-3} 
    \begin{equation*}
    B_q = L_0^{\Phi_1}(q) = \setof{u \in L^{\Phi_1}(p)}{ \expectat p {u \frac qp}  = 0}.  
    \end{equation*}
  \item \label{pr:misc-5} $\etransport pq \colon u \mapsto u - \expectat q u$ is an isomorphism  of $B_p$ onto $B_q$.
  \end{enumerate}
\end{proposition}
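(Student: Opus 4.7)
The nine items split naturally by technique into four groups: (1, 7) differential identities from $K_p=\log M_p$; (2, 6) integrability of the density ratio $q/p$; (3, 4) convex/differential consequences; (5, 8, 9) isomorphism statements between Banach spaces at connected densities. I would address them in that order, using Propositions~\ref{prop:expisanalytic}, \ref{thm:moment_analytic}, \ref{cor:cumulant} and~\ref{th:22} together with Lemma~\ref{lemma:eq-norm} as the main inputs.

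For items~1 and~7 the chain rule does most of the work. Proposition~\ref{thm:moment_analytic} gives the directional derivatives of $M_p$, and applying $\log$ yields $\dirderat{K_p}{u}{v} = \expectat p {v\euler^u}/M_p(u) = \expectat q v$ and $D^2K_p(u)(v_1,v_2) = \covat q {v_1}{v_2}$. Item~7 then follows from $\log(q/p) = u - K_p(u)$ by taking $\expectat q \cdot$: $\phiKL q p = \expectat q u - K_p(u) = \dirderat{K_p}{u}{u} - K_p(u)$.

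The technical heart is item~6, namely $q/p = \euler^{u-K_p(u)} \in L^{\Phi_*}(p)$; item~2 is a short consequence (center at $p$, apply item~1, and use $\expectat p v = 0$ for $v\in\Bspace p$). The plan is to exploit openness of $\mathcal S_p$: choose $\alpha > 1$ with $\alpha u \in \mathcal S_p$, so $\euler^{\alpha u}\in L^1(p)$, and then compare with the polynomial-times-log growth of $\Phi_*$ (which satisfies $\Phi_*(y) \le C_\alpha y^\alpha$ for $y$ large) to conclude $\Phi_*(\euler^u) \le C\euler^{\alpha u}$ outside a bounded region, hence $\euler^u\in L^{\Phi_*}(p)$. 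I expect this bookkeeping, together with the need to handle both signs of $u$ carefully on the region where $\euler^u$ is small, to be the main obstacle.

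The remaining items are corollaries. For item~3, strict convexity of $K_p$---which item~1 exhibits as $D^2K_p(u)(v,v) = \varat q v > 0$ for $v\in\Bspace p\setminus\{0\}$---forces strict monotonicity and injectivity of $\nabla K_p$. Differentiating $\nabla K_p(u) = q/p-1$ along $w\in\Bspace p$ yields the formula $(q/p)(w-\expectat q w)$ of item~4, with injectivity because $q/p>0$ forces $w$ to be constant and $\expectat p w = 0$ then forces it to vanish. Items~5, 8 and~9 rest on Proposition~\ref{th:22}: it provides $L^\Phi(p) = L^\Phi(q)$ as sets with equivalent norms, and by duality plus Lemma~\ref{lemma:eq-norm} the same holds for $L^{\Phi_*}$. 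In item~9 the recentering $u\mapsto u-\expectat q u$ is well defined thanks to item~6, and its inverse is $v\mapsto v-\expectat p v$; item~8 is its kernel characterization using $\expectat q u = \expectat p {u\,q/p}$; item~5 applies the same scheme to $\mBspace p$ via multiplication by $p/q$, with the inverse being multiplication by $q/p$ and boundedness supplied once again by Lemma~\ref{lemma:eq-norm}.
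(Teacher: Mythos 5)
The paper states Proposition~\ref{pr:misc} without giving a proof, deferring to \cite{cena|pistone:2007} and \cite{pistone|rogantin:99}; your outline reproduces the standard route taken there. Items 1--4 and 6--9 are handled correctly and in the intended way: the chain rule applied to $K_p=\log M_p$ via Proposition~\ref{thm:moment_analytic} for items 1 and 7; openness of $\mathcal S_p$ (so that $\alpha u\in\mathcal S_p$ for some $\alpha>1$) combined with the subexponential growth of $\Phi_*$ for item 6, with item 2 as a corollary; strict positivity of $\varat q v$ for $v\in\Bspace p\setminus\set 0$ for item 3; the explicit differentiation and the positivity of $q/p$ for item 4; and the set equality $L^{\Phi}(p)=L^{\Phi}(q)$ from Proposition~\ref{th:22} together with Lemma~\ref{lemma:eq-norm} and the duality pairing with $q/p\in L^{\Phi_*}(p)$ for items 8 and 9.

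The one place your plan is too thin is item 5. Lemma~\ref{lemma:eq-norm} compares $L^{\Phi_0}(p)$ and $L^{\Phi_0}(q)$ under the \emph{identity} map, so it does not by itself give boundedness of the multiplication operator $v\mapsto \frac pq v$; more importantly, the set-theoretic inclusion ``$v\in L^{\Phi_*}(p)$ implies $\frac pq v\in L^{\Phi_*}(q)$'' is the genuinely technical step and is not the ``same scheme'' as item 9. The standard argument exploits the $\Delta_2$ property of $\Phi_*$ (growth of order $y\log y$) to estimate
\begin{equation*}
  \int \Phi_*\!\left(\frac pq v\right) q\,d\mu \;\lesssim\; \int \absoluteval v\,\absoluteval{\log\absoluteval v}\,p\,d\mu + \int \absoluteval v\,\absoluteval{\log\frac pq}\,p\,d\mu + C,
\end{equation*}
bounding the first integral because $v\in L^{\Phi_*}(p)$ and $\Phi_*$ is $\Delta_2$, and the second by the Young--H\"older duality $\expectat p {\absoluteval v \absoluteval{\log(p/q)}}\le 2\,\normat{\Phi_*,p}{v}\,\normat{\Phi,p}{\log(p/q)}$, which is finite precisely because Proposition~\ref{th:22} places $\log\frac qp$ in $L^{\Phi}(p)$. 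Once the two-sided inclusion is in hand, continuity of the multiplication operator follows from the closed graph theorem, i.e.\ the same mechanism that proves Lemma~\ref{lemma:eq-norm}. With that step supplied, your proof is complete and agrees with the one in the cited references.
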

\section{Exponential and mixture manifolds}\label{sec:ExponentialManifold}

\subsection{Exponential manifold}
\label{sec:exponential-manifold}
If $p,q \in \mathcal M$ are connected  by an open exponential arc, then the random variable  $u \in \mathcal S_p$ such that  $q \propto \euler^{u} p $ is unique and it is equal to $ \log \frac qp - \expectat p {  \log \frac qp }$. 
In fact, $q \propto \euler^{u} p$ for some $u \in L^{\Phi_1} \left( p \right)$ if and only if $u - \log \frac qp$ is a constant.
If $u \in \mathcal S_p \subset B_p$, then $u - \log \frac qp = K_p \left(u\right)$ 
and, as $u$ is centered, it follows that $- \expectat p {\log \frac qp }= K_p\left(u\right)$ and $u =  \log \frac qp - \expectat p {  \log \frac qp }$. Indeed,   $u$ is the projection of $\log \frac qp$ onto $B_p$ in the split $L^{\Phi_1}\left(p\right) = B_p \oplus \left< 1 \right>$.
\begin{definition}\label{def:echarts}
  We define two one-to-one mappings: the \emph{parameterization} or \emph{patch} $e_p \colon \mathcal S_p \to \maxexp p$, $e_p(u) = \euler^{u-K_p(u)}\cdot p$ and the \emph{chart} $s_p \colon \maxexp p \to \mathcal S_p$, $s_p(q) = \logof{frac{q}{p}} - \expectat p { \logof{\frac{q}{p}} }$.
\end{definition}
\begin{proposition}\label{prop:expisaffine}
If $p_1, p_2 \in \mathcal E\left(p\right)$, then the transition mapping  $s_{p_2} \circ e_{p_1} \colon \mathcal S_{p_1} \to \mathcal S_{p_2}$ is the restriction of an affine function from $\Bspace {p_1} \to \Bspace {p_2}$
\begin{equation*}
 u  \mapsto   u + \logof{\frac{p_1}{p_2}} - \expectat {p_2} { u + \logof{\frac{p_1}{p_2}}}.
\end{equation*}
\end{proposition}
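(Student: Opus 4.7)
The plan is to compute $s_{p_2}(e_{p_1}(u))$ directly for $u \in \mathcal S_{p_1}$ and recognize the output as the restriction of a globally defined affine map $T \colon \Bspace{p_1} \to \Bspace{p_2}$ to the open subset $\mathcal S_{p_1}$.

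First I would verify that the candidate formula $T(u) = u + \logof{p_1/p_2} - \expectat{p_2}{u + \logof{p_1/p_2}}$ makes sense on all of $\Bspace{p_1}$. Since $p_1, p_2 \in \maxexp p$, Proposition \ref{th:22} applied to the pair $p_1, p_2$ gives $\logof{p_1/p_2} \in L^{\Phi_1}(p_1) \cap L^{\Phi_1}(p_2)$ (item \ref{th:22-3}) and $L^{\Phi_1}(p_1) = L^{\Phi_1}(p_2)$ as vector spaces with equivalent norms (item \ref{th:22-5}). Consequently $u + \logof{p_1/p_2} \in L^{\Phi_1}(p_2)$, and the continuous injection of $L^{\Phi_1}(p_2)$ into $L^1(p_2)$ makes the expectation $\expectat{p_2}{u + \logof{p_1/p_2}}$ finite. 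Subtracting this constant yields a $p_2$-centered element of $L^{\Phi_1}(p_2)$, i.e.\ an element of $\Bspace{p_2}$; continuity (and hence boundedness) of the linear part of $T$ follows from the same norm equivalence.

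Second, I would unwind the two charts. For $u \in \mathcal S_{p_1}$ set $q = e_{p_1}(u) = \euler^{u - K_{p_1}(u)} \cdot p_1$; by items \ref{th:22-1} and \ref{th:22-4} of Proposition \ref{th:22} we have $\maxexp{p_1} = \maxexp{p_2}$, so $q \in \maxexp{p_2}$ and $s_{p_2}(q)$ is indeed defined. Writing
\begin{equation*}
  \logof{\frac{q}{p_2}} \;=\; \logof{\frac{q}{p_1}} + \logof{\frac{p_1}{p_2}} \;=\; u - K_{p_1}(u) + \logof{\frac{p_1}{p_2}},
\end{equation*}
and applying $s_{p_2}(r) = \logof{r/p_2} - \expectat{p_2}{\logof{r/p_2}}$ with $r = q$, the constant $K_{p_1}(u)$ cancels between the two terms, giving
\begin{equation*}
  s_{p_2}(q) \;=\; u + \logof{\frac{p_1}{p_2}} - \expectat{p_2}{u + \logof{\frac{p_1}{p_2}}} \;=\; T(u).
\end{equation*}

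The only nontrivial obstacle is the well-definedness step: without the set-theoretic identification $L^{\Phi_1}(p_1) = L^{\Phi_1}(p_2)$ and the membership $\logof{p_1/p_2} \in L^{\Phi_1}(p_2)$, one cannot even write $T$ on $\Bspace{p_1}$ or take the $p_2$-expectation of a generic $u \in \Bspace{p_1}$. Both facts are furnished by Proposition \ref{th:22} as soon as $p_1, p_2 \in \maxexp p$; once they are in hand, the transition formula reduces to the algebraic manipulation above.
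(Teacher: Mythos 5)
Your proposal is correct and follows essentially the same route the paper intends: unwind the two charts, use $\logof{q/p_2}=\logof{q/p_1}+\logof{p_1/p_2}$ so the constant $K_{p_1}(u)$ cancels under centering, and invoke Proposition~\ref{th:22} (items on $\logof{p_1/p_2}\in L^{\Phi_1}(p_1)\cap L^{\Phi_1}(p_2)$ and the equality of the Orlicz spaces with equivalent norms) to see that the resulting map is a well-defined, continuous affine map on all of $\Bspace{p_1}$. Your explicit attention to the well-definedness of $\expectat{p_2}{u+\logof{p_1/p_2}}$ for arbitrary $u\in\Bspace{p_1}$ is exactly the point the paper delegates to Proposition~\ref{th:22}, so there is no gap.
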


The derivative of the transition map $s_{p_2} \circ e_{p_1} $ is the  isomorphism of $B_{p_1}$ onto $B_{p_2}$
\begin{equation*}
  B_{p_1} \ni u \mapsto u - \expectat {p_2}{u} = \etransport{p_1}{p_2} \in B_{p_2}. 
\end{equation*}

\begin{definition}The \emph{exponential manifold} is defined by the atlas of charts in Def. \ref{def:echarts}. It is an \emph{affine manifold} because of Prop. \ref{prop:expisaffine}. Each $\maxexp p$ is a connected component. 
\end{definition}

A metric topology called e-topology is induced by the exponential manifold on $\pdensities$, namely a sequence  $\left\{ p_n \right\}$, ${n \in \mathbb{N}}$, is e-convergent to $p$ if and only if 
  sequences $\left\{ p_n / p \right\}$ and $\left\{ p / p_n \right\}$ are convergent to $1$ in each 
  $L^\alpha \left( p \right)$, $\alpha>1$.

Mixture arcs are regular in each connected component $\mathcal E$ of the exponential manifold.
\begin{proposition}\label{pr:mix}\ 
\begin{enumerate}
\item 
If $q \in \mathcal E\left( p \right)$, then the mixture model 
$p \left( \lambda \right) = \left( 1 - \lambda \right) p + \lambda q \in \mathcal E\left(p\right)$ for $\lambda \in \left[ 0, 1 \right]$.
\item
An open mixture arc $p\left( t \right) =  \left(1 - t \right) p + t q$, $t \in \left] -\alpha, 1 + \beta \right[$, $\alpha,\beta > 0$ is e-continuous.
\end{enumerate}
\end{proposition}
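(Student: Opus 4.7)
The plan is to derive part (1) from the characterization (\ref{th:22-3}) in Prop.~\ref{th:22} --- namely $q \in \maxexp p$ iff $\log(q/p)\in L^{\Phi_1}(p)\cap L^{\Phi_1}(q)$ --- combined with the pointwise estimate
\begin{equation*}
 \lambda\log(q/p) \le \log\bigl((1-\lambda)+\lambda q/p\bigr) \le (\log(q/p))^+
\end{equation*}
coming from concavity of $\log$ and the inequality $(1-\lambda)+\lambda t\le\max(1,t)$. Setting $r_\lambda=(1-\lambda)p+\lambda q$ so that $r_\lambda/p=(1-\lambda)+\lambda q/p$, a case split on the sign of $\log(q/p)$ turns these bounds into $|\log(r_\lambda/p)|\le|\log(q/p)|$ a.s. Since $q\in\maxexp p$ places $\log(q/p)$ in $L^{\Phi_1}(p)\cap L^{\Phi_1}(q)$, dominating the Luxemburg modulus gives $\log(r_\lambda/p)\in L^{\Phi_1}(p)$ directly, and the linearity $\expectat{r_\lambda}{f}=(1-\lambda)\expectat{p}{f}+\lambda\expectat{q}{f}$ shows $L^{\Phi_1}(p)\cap L^{\Phi_1}(q)\subset L^{\Phi_1}(r_\lambda)$, hence by the same domination $\log(r_\lambda/p)\in L^{\Phi_1}(r_\lambda)$. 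Applying (\ref{th:22-3})$\Rightarrow$(\ref{th:22-1}) of Prop.~\ref{th:22} then yields $r_\lambda\in\maxexp p$; the endpoints $\lambda=0,1$ are trivial.

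For part (2) my first move is a local reparametrization of the arc as a mixture based at $p(t_0)$: for $t_0\in(-\alpha,1+\beta)$ and $h_0>0$ chosen so that $t_0+h_0$ still lies in the open interval, a direct check gives
\begin{equation*}
 p(t_0+h) = (1-h/h_0)\,p(t_0) + (h/h_0)\,p(t_0+h_0),\qquad h\in[0,h_0],
\end{equation*}
with the left-sided analogue using a point $t_0-h_0'$. Transitivity of the equivalence relation of Def.~\ref{def:open-connected} places $p(t_0\pm h_0^{(\prime)})$ in the same component as $p(t_0)$, so part (1) applied with base $p(t_0)$ shows $p(t_0+h)\in\maxexp{p(t_0)}=\maxexp p$; the same formula also yields the clean identity
\begin{equation*}
 p(t_0+h)/p(t_0) - 1 = (h/h_0)\bigl(p(t_0+h_0)/p(t_0) - 1\bigr).
\end{equation*}

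E-continuity at $t_0$ then reduces to $L^a$-convergence for each $a>1$. The key observation is that $p(t_0+h_0)/p(t_0)$ is a M\"obius transformation of $q/p>0$ whose range on the set of admissible values of $q/p$ (those keeping every $p(t)$ positive) is bounded above and bounded away from $0$ a.s., say $L\le p(t_0+h_0)/p(t_0)\le U$. Hence $|p(t_0+h)/p(t_0)-1|\le(|h|/h_0)\max(U-1,1-L)$, giving $\expectat{p(t_0)}{|p(t_0+h)/p(t_0)-1|^a}\to 0$ for every $a>1$ by direct domination. For the reverse ratio the same representation forces $p(t_0)/p(t_0+h)\le 2/\min(1,L)$ once $|h|\le h_0/2$, and pointwise convergence to $1$ combined with this uniform bound gives $L^a(p(t_0))$-convergence by dominated convergence; the analogous construction for $h<0$ completes the argument. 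The step I expect to be most delicate is the two-sided pointwise boundedness of $p(t_0+h_0)/p(t_0)$ a.s., because this is where the hypothesis that $p(t)$ remains a density on the \emph{full} open interval genuinely enters: it constrains $q/p$ to stay on the side of the poles of the M\"obius transformation, and without this open-arc hypothesis the bound would fail.
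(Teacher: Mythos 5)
Your proof is correct. The paper states Prop.~\ref{pr:mix} without an accompanying proof (it is imported from the cited references), but your argument coincides with the standard one: part (1) rests on the pointwise domination $\absoluteval{\log(r_\lambda/p)}\le\absoluteval{\log(q/p)}$ combined with the characterization \ref{th:22-3} of Prop.~\ref{th:22} and the monotonicity of the Luxemburg norm, and part (2) rests on the observation that the open-arc hypothesis forces $\beta/(1+\beta)\le q/p\le(1+\alpha)/\alpha$ a.s., whence $p(t)/p(t_0)$ and $p(t_0)/p(t)$ are uniformly bounded and converge to $1$ in every $L^a(p(t_0))$, $a>1$.
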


\subsection{Mixture manifold}\label{sec:mix}

We are not able to define a mixture manifold with the same support as the exponential manifold. 
For each $p\in \pdensities$ and each $u \in \mathcal S_p$, $q = \euler^{u -K_p\left(u\right)}\cdot p$,  the derivative of $K_p $ at $u$,  in the direction $v \in B_p$, is $D K_p\left(u\right) \cdot v = \expectat p { \left(\frac{q}{p} -1 \right) v   }$,  and it is identified to its gradient $\nabla K_p(u) = q/p - 1 \in {\mBspace p}$. The mapping  $q \mapsto q/p -1 \in {\mBspace p}$ cannot be a chart because its values are bounded below by $-1$ but it is strongly reminiscent of the mean parameterization $\bm \eta = \nabla \psi(\bm \theta)$ in parametric exponential families $p_\theta = \expof{\bm{\theta \cdot T} - \psi(\bm\theta)} \cdot p_0$.

We move to the larger set $\sdensities = \setof{f}{\int f\ d\mu = 1} \supset \pdensities$ and for each $p \in \maxexp p$ we introduce  the subset $^*\mathcal{U}_p$ defined by the condition $\frac{q}{p} \in L^{\Phi_*(p)}$. Our chart is the map
\begin{equation}
  \label{eq:etamap}
  \eta_p \colon ^*\mathcal{U}_p \ni q \mapsto \frac{q}{p} - 1 \in \mBspace p.
\end{equation}

As $\eta_p(q)$, for $q \in \mathcal U_p \subset \mathcal E(p)$, equals $v \mapsto \expectat q v$, it is the non parametric version of the so called expectation parameter. This mapping is bijective and its inverse is:
\begin{equation*}
\eta_p^{-1} : {\mBspace p} \ni u \mapsto \left(u + 1 \right) p \in {^* \mathcal{U}_p}.
\end{equation*}
The collection of sets $\left\{ {^*\mathcal{U}_p} \right\}_{p \in \densities}$ 
is a covering of $\sdensities$.

There is a nice characterization of the elements of $^*\mathcal{U}_p \bigcap \densities$: they are all  the probability densities  with finite divergence with respect to $p$, see \cite[Prop 31]{cena|pistone:2007}. Moreover $\mathcal{U}_p \subset {^*\mathcal{U}_p}$ and  $p_1,p_2 \in \mathcal{E} \left( p \right)$ implies $^*\mathcal{U}_{p_1} = ^*\mathcal{U}_{p_2}$. In conclusion, we can define the mixture manifold as follows.

For each pair $p_1,p_2 \in \mathcal{E}\left(p\right)$ we have the affine transition map
\begin{equation*}
  \eta_{p_2} \circ \eta^{-1}_{p_1} : \left\{
    \begin{array}{rcl}
      ^*B_{p_1} & \rightarrow &  {^*B_{p_2}} \smallskip\\
      u & \mapsto &  u \, \dfrac{p_1}{p_2} + \dfrac{p_1}{p_2} -1
    \end{array}\right.
\end{equation*}
and the subset of $\sdensities$, $^*\mathcal{E}\left(p\right) = \setof {q \in \sdensities }{ \frac{q}{p} \in L^{\Phi_*} \left(p\right)}$, which is equal to $^*\mathcal{U}_q$ if $q \in \maxexp p$.
\begin{proposition}\label{th:mix}
  Let $p \in \pdensities$ be given. 
  The collection of charts 
  \begin{equation*}
    \setof { \left( {^*\mathcal{U}_{q}} , \eta_{q}\right) }{ q \in {\mathcal{E}} \left(p\right) }
  \end{equation*}
  is an affine $C^\infty$-atlas on ${^*\mathcal{E} \left(p\right)}$.
\end{proposition}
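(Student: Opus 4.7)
The plan is to verify the three defining conditions of a $C^\infty$-atlas: (a) each $\eta_q$ is a bijection between ${}^*\mathcal{U}_q$ and the model Banach space $\mBspace q$; (b) the domains ${}^*\mathcal{U}_q$, $q \in \maxexp p$, cover ${}^*\maxmix p$; and (c) the transition maps between overlapping charts are smooth, and in fact affine with linear parts that are Banach isomorphisms of the model spaces.

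For (a) I would check that the candidate inverse $\eta_q^{-1}(u) = (u+1)q$ actually takes $\mBspace q$ into ${}^*\mathcal{U}_q$: the centering $\expectat q u = 0$ gives $\int (u+1)q\,d\mu = 1$, hence $(u+1)q \in \sdensities$; and the ratio $(u+1)q/q = u+1$ lies in $L^{\Phi_*}(q)$ because $u$ does and constants belong to $L^{\Phi_*}(q)$. Conversely, for $r \in {}^*\mathcal{U}_q$ the element $r/q - 1$ is in $L^{\Phi_*}(q)$ by the defining condition and its $q$-expectation vanishes because $\int r\,d\mu = 1$. Direct substitution confirms that the two assignments are mutually inverse. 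For (b) I would invoke the fact, noted in the text, that ${}^*\mathcal{U}_{p_1} = {}^*\mathcal{U}_{p_2}$ whenever $p_1, p_2 \in \maxexp p$, so that $\bigcup_{q \in \maxexp p} {}^*\mathcal{U}_q = {}^*\mathcal{U}_p$, which is exactly ${}^*\maxmix p$ by definition.

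The heart of the argument is (c). Direct composition yields, for $u \in \mBspace{p_1}$,
\begin{equation*}
  \eta_{p_2} \circ \eta_{p_1}^{-1}(u) \;=\; \frac{(u+1)p_1}{p_2} - 1 \;=\; u\,\frac{p_1}{p_2} + \left(\frac{p_1}{p_2}-1\right),
\end{equation*}
exhibiting the transition as multiplication by $p_1/p_2$ followed by translation by $p_1/p_2 - 1$. The multiplication is exactly the isomorphism $\mtransport{p_1}{p_2} \colon \mBspace{p_1} \to \mBspace{p_2}$ of Proposition \ref{pr:misc}, item \ref{pr:misc-upq}; the translation constant belongs to $\mBspace{p_2}$ because $p_1 \in \maxexp{p_2} = \maxexp p$ gives $p_1/p_2 \in L^{\Phi_*}(p_2)$ by Proposition \ref{pr:misc}, item \ref{pr:misc-1} (taking $p_2$ as the reference, which is legitimate via the equivalence in Proposition \ref{th:22}), while the centering $\int (p_1/p_2 - 1)p_2\,d\mu = 0$ is automatic. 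Since the inverse transition has the same form, we obtain an affine diffeomorphism, so the atlas is affine and in particular $C^\infty$.

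The main subtlety I expect is precisely that both pieces of the transition map really live in $\mBspace{p_2}$: this rests on the symmetric integrability of density ratios between any two densities sharing a connected component of the exponential manifold, encoded in Proposition \ref{th:22}, together with items \ref{pr:misc-upq} and \ref{pr:misc-1} of Proposition \ref{pr:misc}. Once these are in hand the affine structure is essentially a one-line computation.
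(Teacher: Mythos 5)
Your proposal is correct and follows essentially the same route as the paper: the paper's own justification consists precisely of the bijectivity of $\eta_q$ with inverse $u \mapsto (u+1)q$, the identity ${}^*\mathcal{U}_{p_1} = {}^*\mathcal{U}_{p_2}$ for $p_1,p_2 \in \maxexp p$ giving the covering, and the displayed affine transition map $u \mapsto u\,\frac{p_1}{p_2} + \frac{p_1}{p_2} - 1$, whose linear part is the isomorphism $\mtransport{p_1}{p_2}$ of Prop.~\ref{pr:misc} and whose constant term lies in $\mBspace{p_2}$ by the same proposition together with Prop.~\ref{th:22}. You have correctly identified the one real point of substance, namely that both pieces of the transition map land in $\mBspace{p_2}$, and cited the same supporting results the paper relies on.
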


The \emph{mixture manifold} is defined by the atlas in Prop. \ref{th:mix}. The mixture manifold is an extension of the exponential manifold.

\begin{proposition}
For each density $p \in \pdensities$, the inclusion $\maxexp p \to \maxmix p$ is of class $C^\infty$.
\end{proposition}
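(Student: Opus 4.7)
The plan is to show that the inclusion is $C^\infty$ at an arbitrary point $q_0 \in \maxexp p$, since smoothness is a local property. At such a $q_0$ I would use the exponential chart centered at $q_0$ on the domain side and the mixture chart centered at $q_0$ on the codomain side: by Prop.~\ref{th:22} we have $\maxexp {q_0} = \maxexp p$, and by the remark after \eqref{eq:etamap} we have $\maxexp p \subset {^*\mathcal{U}_{q_0}} \subset \maxmix p$, so $(s_{q_0}, \mathcal S_{q_0})$ and $(\eta_{q_0}, {^*\mathcal{U}_{q_0}})$ are compatible charts covering a neighbourhood of $q_0$ on each side.

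In these charts the inclusion is represented by $\eta_{q_0} \circ e_{q_0} \colon \mathcal S_{q_0} \to \mBspace {q_0}$ given by
\begin{equation*}
u \mapsto \euler^{u - K_{q_0}(u)} - 1,
\end{equation*}
with image lying in $\mBspace {q_0} = L^{\Phi_*}_0(q_0)$ because $\expectat {q_0}{\euler^{u-K_{q_0}(u)}} = 1$ by the very definition of $K_{q_0}$. The plan is to argue that this is analytic, hence $C^\infty$, on the open unit ball of $B_{q_0}$, which is a neighbourhood of the origin (the coordinate of $q_0$). Three ingredients combine: (i) by Prop.~\ref{cor:cumulant}(4), $u \mapsto K_{q_0}(u)$ is analytic into $\reals$, so the composition with the scalar exponential gives an analytic map $u \mapsto \euler^{-K_{q_0}(u)}$ into $\reals$; (ii) by Prop.~\ref{prop:expisanalytic}(3), the superposition $u \mapsto \euler^u$ is analytic from the open unit ball of $L^\Phi(q_0)$ into $L^a(q_0)$ for every $a \ge 1$; (iii) the continuous inclusion $L^a(q_0) \hookrightarrow L^{\Phi_*}(q_0)$ from the duality chain displayed in the introduction (choosing any $a > 1$) upgrades the target to $L^{\Phi_*}(q_0)$. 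Multiplication by the analytic scalar factor $\euler^{-K_{q_0}(u)}$ is continuous bilinear into $L^{\Phi_*}(q_0)$, hence analytic, and translation by the constant $-1$ preserves analyticity. Running this argument for every $q_0 \in \maxexp p$ gives the result.

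The main obstacle I expect is the codomain bookkeeping: Prop.~\ref{prop:expisanalytic} only delivers analyticity into the $L^a$ scale, not directly into the exponential-Orlicz conjugate $L^{\Phi_*}(q_0)$ that houses $\mBspace {q_0}$, so one must explicitly factor through the embedding chain $L^a(q_0) \hookrightarrow L^b(q_0) \hookrightarrow L^{\Phi_*}(q_0)$ to reach the correct target Banach space. A second subtlety is that $\mathcal S_{q_0}$ can be strictly larger than the open unit ball of $B_{q_0}$; the escape is that, since the claim is local and we may re-center the chart at any point of $\maxexp p$, the origin of the shifted chart always sits inside the unit ball where analyticity is available, so the local-to-global step is automatic and no extension of the analyticity of $K_{q_0}$ beyond the unit ball is required. (As a consistency check, the first derivative computed this way agrees with the formula $D(\nabla K_{q_0}(u))w = (q/q_0)(w - \expectat{q}{w})$ of Prop.~\ref{pr:misc}(\ref{item:weakderiv}).)
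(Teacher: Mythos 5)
Your argument is correct and is essentially the proof that the paper leaves implicit (it is the argument of the reference \cite{cena|pistone:2007}): in the compatible charts the inclusion reads $u \mapsto \euler^{u-K_{q_0}(u)}-1$, and analyticity of the superposition operator into the $L^a$ scale, the continuous embedding $L^a(q_0)\hookrightarrow L^{\Phi_*}(q_0)$ from the duality chain, and analyticity of $K_{q_0}$ combine to give local analyticity, hence $C^\infty$. One small correction: Prop.~\ref{prop:expisanalytic} asserts that $v\mapsto\euler^{v/a}$ is analytic from the open \emph{unit} ball of $L^{\Phi}(q_0)$ into $L^a(q_0)$, so $u\mapsto\euler^{u}$ is analytic into $L^a(q_0)$ only on the ball of radius $1/a$; since your argument is local at the origin of each re-centered chart, this merely shrinks the neighbourhood on which you work and does not affect the conclusion.
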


 \subsection{Examples of applications}
\label{sec:exampl-appl}
\begin{example}[Divergence] The divergence $D(q\Vert r) = \expectat q {\logof{\frac qr}}$ is $C^\infty$ jointly in both variables for $q,r \in \maxexp p$. In fact, in the $p$ chart,  $u = s_p(q)$, $v = s_p(r)$ gives 
  \begin{align*}
    \mathcal S_p \times \mathcal S_p \ni (u,v) &\mapsto \expectat q {u - K_p(u) - v + K_p(v)} \\
&= K_p(v) - K_p(u) - D K_p(u) (v-u). 
  \end{align*}
In the exponential chart the KL divergence is the Bregman divergence of $K_p$.

The partial derivative in $u$ in the direction $w$ is 
\begin{equation*}
 -  D K_p(u) w - D^2 K_p (u)(v - u, w) + DK_p(u) w = \covat q {u-v}{w},
\end{equation*}
hence the direction of steepest increase is $w \propto (u - v)$. The partial derivative in $v$ in the direction $w$ is
\begin{equation*}
   DK_p(v) w - DK_p(u) w = \expectat r w - \expectat q w.
\end{equation*}
This quantity is strictly positive for $w = v -u  \ne 0$ because of the monotonicity of $K_p$.

The second partial derivative in $u$ in the direction $w_1,w_2$ is
\begin{equation*}
  D^3K_p(u)(u-v,w_1,w_2) + D^2K_p(u)(w_1,w_2),
\end{equation*}
which reduces on the diagonal $q=r$ to $D^2K_p(u)(w_1,w_2) = \covat q {w_1}{w_2}$. 

The second partial derivative in $v$ in the direction $w_1,w_2$ is
\begin{equation*}
D^2 K_p(v)(w_1,w_2) = \covat r {w_1}{w_2}
\end{equation*}
which reduces on the diagonal $q=r$ to $D^2K_p(u)(w_1,w_2) = \covat q {w_1}{w_2}$. Some approaches to Information Geometry are based on the Hessian on the diagonal of a divergence (yoke, potential) e.g., \cite{barndorff-nielsen|jupp:1997}, \cite{shima:2007}.
 
This is a case of of high regularity as we assume the densities $q$ and $r$ positive and connected by an open exponential arc. In our framework there is another option, namely to consider $(q,r) \mapsto D(q\Vert r)$ as a mapping defined on $\maxmix p \times \maxexp p$, see the Ex \ref{ex:pitagoreanth} below. Without any regularity assumption one can look for the joint semicontinuity as in \cite[Sec 9.4]{ambrosio|gigli|savare:2008}.
\end{example}

\begin{example}[Pitagorean theorem]\label{ex:pitagoreanth}

  Let $p \in \pdensities$ and $s_p : \maxexp p \to \Bspace p$ and $\eta_p : \maxmix p \to \mBspace p$ be charts respectively in the exponential and mixture manifold. We can exploit the duality between $\mBspace p$ and $Bspace p$ as follows. Let be given densities $q \in \maxexp p$, $u= s_p \left(q\right)$ and $r \in \maxmix p \cap \densities$.  We have 
\begin{equation*}
\expectat p { \eta_p \left(r\right) s_p \left(q\right) } = \expectat p { \left( \frac{r}{p} -1\right) u } = \expectat r u.
\end{equation*} 
As 
\begin{equation*}
  u = \log\left(\frac{q}{p}\right) - \expectat p { \log\left(\frac{q}{p}\right)} = 
  \log\left(\frac{q}{p}\right) + D \left( p \, \| \, q \right),
\end{equation*}
we have
\begin{equation*}
\expectat p { \eta_p \left(r\right) s_p \left(q\right) } = - D \left( r \, \| \, q \right) + D \left( r \, \| \, p \right)+D \left( p \, | \, q \right).
\end{equation*}
In particular, if the left side is zero, 
\begin{equation*}
D \left( r \, \| \, q \right) = D \left( r \, \| \, p \right)+ D \left( p \, \| \, q \right),
\end{equation*}
which is the Pitagorean relation in Information Geometry e.g., \cite{csiszar|matus:2003}.
\end{example}

\begin{example}[Stochastics]
On a Wiener space $(\Omega, \mathcal F, (\mathcal F_t)_{t \ge 0},\nu)$, the geometric Brownian motion is $Z_t = \expof{W_t - t/2}$ is strictly positive and $\int Z_t \ d\nu = 1$.  From Ito's formula, $\Phi=\cosh -1$, $\alpha > 0$,
\begin{equation*}
  \int \Phi(\alpha Z_t)\ d\nu = \frac{\alpha^2}2 \int_0^t dt\ \int \Phi(\alpha Z_s) \ d\nu.
\end{equation*}
It follows that $\int \Phi(\alpha Z_t)\ d\nu = \euler^{\alpha^2 t / 2} - 1$ if finite for all $\alpha$, hence $Z_t \in \maxexp 1$ and $s_1\left(Z_t\right) = W_t$. The statistical model $Z_t$, $t\ge 0$, is intrinsically non parametric because the vector space generated by the $W_t$, $t >0$, has $L^2$ closure equal to the full gaussian space $\setof{\int f \ dW}{\int_0^t f(s)^2 \ ds < \infty, t>0}$.

The exponential representation of $\pdensities$ is static, but a dynamic variant has been devised by Imparato \cite{imparato:thesis} and \cite{SST:2013}.
\end{example}

\subsection{Exponential families, parameterization}
\label{sec:exponential-families}
Both the mixture and the exponential manifold are intrinsic structures which are constructed with virtually no assumptions but the positivity of the densities. Specific applications will require special assumptions and special parameterization. We suggest to distinguish the manifold charts from other useful parameterization via  definitions of this type. Note that we have defined our statistical manifolds is such a way the coordinate space of each $p$-chart is identifiable with the tangent space at $p$. 
\begin{definition}
  Let $A$ be an open subset of the exponential manifold $\pdensities$ and let $\mathcal M$ be a manifold. An $k$-differentiable mapping $F \colon A \to \mathcal M$ is a proper parameterization if the tangent linear form $T_p F \colon T_p \pdensities \to T{F(p)}\mathcal M$ is surjective. 
\end{definition}
This approach is different from the widely used reverse approach, where a parameterization is a mapping from a parameter's manifold to set of densities. Given a proper parameterization, either the inverse tangent mapping is continuous, in which case the parameterization is actually a chart, or it is possible to pull back the image structure to $\pdensities$. It is what we have done to build the mixture manifold bases on the mean parameterization $q \mapsto q/p-1$.

A similar discussion applies when dealing with sub-manifolds. According to the general theory of Banach manifolds, a sub-manifold $\mathcal M$ is a subset of $\pdensities$ with a tangent space $T_p\mathcal M$ which splits in $T_p\pdensities = \Bspace p$, that is closed and has a complement space. Some closed subspaces of $\Bspace p$ split, e.g. finite dimensional subspaces.

In particular, we have the following definition of exponential family.
\begin{definition}
Let $V$ be a closed subspace of $\Bspace p$. The $V$-exponential family is the subset of the maximal exponential family $\maxexp p$ defined by
\begin{equation*}
  \mathcal E_V(p) = \setof{\euler^{u - K_p(u)}\cdot p}{ u \in \mathcal S_p \cap V}
\end{equation*}
\end{definition}
If a splitting of $V$ is known, then the exponential family is a sub-manifold of the exponential manifold. The proper definition of a sub-manifold in the framework of mixture/exponential manifold is an importan topic to be investigated beyond the partial results in the literature we hare summarizing here. 

\section{Differential equations}\label{sec:diff-eq}
The study of differential or evolution equations fits nicely in the theory of Banach manifolds \cite[Ch IV]{lang:1995}, but would require a generalizations to tackle the technical issue of the non reflexive duality of the couple $L^\Phi$, $L^{\Phi_*}$. We review here the language introduced in \cite{pistone:2009SL} and mention some examples.

 Let $p(t)$, $t \in I$ open, be a curve in a given maximal exponential family $\maxexp p$, with $u(t)$ the $p$-coordinate of $p(t)$, $p(t) = \expof{u(t) - K_p(u(t))}\cdot p$. If the curve $I \ni t \mapsto u(t) \in B_p$ is of class $C^1$ with derivative at $t$ denoted $\dot u(t)$, the mapping $p \colon I \ni t \mapsto p(t) \in \maxexp p$ is differentiable and its derivative is the element of $T_{p(t)} \pdensities$ whose coordinate at $p$ is $\dot u(t)$. Its velocity is defined to be the curve $I \to T\pdensities$ such that the $p$-coordinates of $(p(t), \delta p(t)) \in T_{p(t)} \pdensities$ are $(u(t), \dot u(t))$, that is $\dot u (t) = \delta p(t) - \expectat p {\delta p(t)}$ and $\delta p(t) = \dot u(t) - \expectat {p(t)} {\dot u(t)}$. We have
\begin{equation}
  \label{eq:diff-eq-1}
 \delta p(t) = \dot u(t) - \expectat {p(t)} {\dot u(t)} = \derivby t {(u(t) - K_p(u(t))} =
 \derivby t {\logof{\frac{p(t)}{p}}} = \frac {\dot p(t)}{p(t)},
\end{equation}
where the last equality is computed in $L^{\Phi_*}(p)$.

We can do a similar construction in the mixture manifold. Let $p(t)$, $t \in I$ open, be a curve in the mixture manifold $\sdensities$, $p(t) = (1 + v(t))p$. If the curve $I \ni t \mapsto v(t) \in \mBspace p$ is of class $C^1$, the velocity of the curve is $I \to (p,\delta p) \in T\sdensities$, with $p$-coordinates of $\delta p(t)$ equal to $\dot v(t)$, that is $\delta p(t) = \frac p{p(t)} \dot v(t)$. It follows again $\delta p(t) = {\dot p(t)}/{p(t)}$. The two representations of the velocity equal  in the \emph{moving frame}. Note that other representations would be possible and are actually used in the literature e.g.,
\begin{equation*}
  \frac{\dot p(t)}{p(t)} = \frac{\derivby t 2\sqrt{p(t)}}{\sqrt{p(t)}},
\end{equation*}
which is a representation to be discussed in Sec. \ref{sec:H-ESP} below, which is based on the embedding $\pdensities \ni p \mapsto \sqrt p \in L^2(\mu)$. 

A \emph{vector field} $F$ of of the exponential manifold $\pdensities$, is a section of the tangent bundle $T\pdensities$, $F(p) \in T_p\pdensities$, with domain $p \in \domainof F$. A curve $p(t)$, $t \in I$, is an \emph{integral curve} of $F$ if $p(t) \in \domainof F$ and $\delta p(t) = F\left(p(t)\right)$, $t \in I$. Same definition in the case of the mixture manifold. In the moving frame the differential equation can be written $\dot p(t) = F(p(t)) p(t)$. In the exponential chart at $p$ we write $\dot u(t) = \etransport {p(t)} p F(p(t))$ together with $p(t) = \euler^{u(t) - K_p(u)} \cdot p$. In the mixture chart at $p$ we write $\dot v(t) = \mtransport {p(t} p F(p(t))$ with $ p(t) = (1+v(t)) p$. 

We discuss briefly the existence of solutions. In the exponential chart we have the differential equation
\begin{align*}
\dot u(t) &= \etransport{p(t)} p F(p(t)) =  F(p(t)) - \expectat p { F(p(t))}, \\
               p(t) &= \euler^{u(t) - K_p(u(t))} \cdot p, \\
               u(0) &= 0.
\end{align*}
We can use the duality on $\Bspace p \times \mBspace p$ to check if the functional 
\begin{equation*}
  \bm F(u) = F\left(\euler^{u(t) - K_p(u(t))} \cdot p\right) - \expectat p {F\left(\euler^{u(t) - K_p(u(t))} \cdot p\right)}
\end{equation*}
satisfies a one-sided Lipschitz condition $\scalarof{\bm F(u) - \bm F(v)}{u - v} - \lambda \scalarof{u-v}{u-v} \le 0$, $\lambda > 0$. We have
\begin{equation*}
  \scalarof{\bm F(u) - \bm F(v)}{u - v} = \covat p {F(e_p(u)\cdot p)-F(e_p(v)\cdot p)}{u-v}.
\end{equation*}
The uniqueness of the solution follows easily from standard arguments for evolution equations. Proof of existence requires usually extra conditions in order to apply methods from functional analysis. We discuss a set of typical examples.

\begin{example}[One dimensional exponential and mixture families]
Let $f \in L^\Phi(p)$ and define the vector field $F$ whose value at each $q \in \maxexp p$ is represented in the frame at $q$ by $q \mapsto f - \expectat q f$. We can assume without restriction that $f \in \Bspace p$ in which case $f - \expectat q f = \etransport pq f$. The differential equation in the moving frame is $\dot p(t)/p(t) = f - \expectat {p(t)} f$, with solution $\logof{p(t)} = \logof{p(0)} + t\left(f - \expectat {p(t)} f\right)$. In the fixed frame at the initial condition $p(0) = p$ the equation is $\dot u(t) = f - \expectat p f$, with solution $u(t) = t(f-\expectat p f)$, hence $p(t) = \expof{t(f-\expectat p f) - K_p\left(t(f-\expectat p f)\right)}\cdot p$. The equation in the mixture manifold, $f \in \mBspace p$ is $\dot p(t)/p(t) = \mtransport p{p(t)} f$, with solution $p(t) = p(1+tf)$. We have constructed here the geodesics of the two manifolds.
\end{example}
\begin{example}[Optimization]
Stochastic relaxation of optimization problems using tools from Information Geometry has been studied in \cite{malago|matteucci|dalseno:2008}, \cite{malago|matteucci|pistone:2011a}, \cite{malago|matteucci|pistone:2011b}, \cite{arnoldetal:2011arXiv}, \cite{malago:2012thesis},\cite{malago|matteucci|pistone:2013CEC}.  The expectation of a real function $F \in L^\Phi(p)$ is an affine function in the mixture chart, $\expectat q F = \expectat p {F \left(\frac{q}{p}-1\right)} + \expectat p F$, while in the exponential chart is a function of $u = s_p(q)$, $\widetilde F(u) = \expectat q F$. The equation for the derivative of the cumulant function $K_p$ gives 
\begin{equation*}
  \widetilde F(u) = \dirderat {K_p} u {(F - \expectat p F)} + \expectat p F,
\end{equation*}
and the derivative of this function in the direction $v$ is the Hessian of $K_p$ applied to $(F - \expectat p F)\otimes v$:
\begin{equation*}
  \dirderat \Phi u v = D^2 K_p(u) (F - \expectat p F)\otimes v = \covat q v F.
\end{equation*}
The direction of steepest ascent of the expectation is $F - \expectat q F$, hence the equation of the flow is $\delta p(t) = \etransport p{p(t)} F$, whose solution i the exponential family with canonical statistics $F$ . In practice, the flow is restricted to an exponential model $\mathcal E_V(p)$, $V \subset \Bspace p$ and the direction of steepest ascent is a projection of $F$ onto the $V$, if it exists.
\end{example}

\begin{example}[Heat equation]
The heat equation $\frac{\partial}{\partial t} p(t,x) - \frac{\partial^2}{\partial x^2} p(t,x) = 0$, $x \in \reals$ for simplicity, is an example of evolution equation in $T\pdensities$ with vector field
\begin{equation*}
  F(p)(x) = \frac{\frac{\partial^2}{\partial x^2} p(x)}{p(x)}.
\end{equation*}
A proper discussion would require an extension of our construction to Sobolev-Orlicz spaces \cite[Ch 8]{adams|fournier:2003} and  the solution would be based on the variational form of the heat equation.  For each $v$ with the proper domain of regularity $D$
\begin{equation*}
  \expectat p {F(p) v} = \int p''(x) v(x) \ dx = - \int p'(x) v'(x) \ dx = - \expectat p {\frac {p'}{p} v'}
\end{equation*}
from which the weak form of the evolution equation follows,
\begin{equation*}
  \expectat {p(t)} {\delta p(t) v} + \expectat {p(t)} {F_0(p(t)) v} = 0, \quad v \in D,
\end{equation*}
where $F_0(p) = \nabla p/p$ is the vector field associated to the translation model $p_\theta (x) = p(x - \theta)$, see e.g, \cite{otto:2001}.
\end{example}
 
\begin{example}[Decision theory]
A further interesting example of evolution equation arises in decision theory \cite{parry|dawid|lauritzen:2012}. For simplicity the sample space is $\reals$, $q \in \maxexp p$, $q = \euler^{u-K_p(u)}\cdot p$ and $\log q - \log p = u - K_p(u)$. Assume $u$ belongs to the Sobolev-Orlicz space
\begin{equation*}
  W_0^{\Phi,1} = \setof{u \in L_0^{\Phi}(p)}{\nabla u \in L^\Phi(p)},
\end{equation*}
where $\nabla$ denotes the spatial derivative. The following expression is a statistical divergence
   \begin{align*}
     d(p,q) &= \frac14\expectat p {\absoluteval{\nabla \log q - \nabla \log p}^2}\\
 &= \frac14\expectat p {\absoluteval{\nabla u}^2}.
   \end{align*}
For $u,v_0 \in W^{\Phi,1}$ we have a bilinear form
\begin{align*}
  (u,v) &\mapsto \expectat p {\nabla u \nabla v} = \int u_x(x) v_x(x) p(x) dx, \\ &= - \int \nabla (u_x(x) p(x)) v(x) dx, \\  &= - \int (\Delta u(x) p(x) + \nabla u(x) \nabla p(x)) v(x) dx, \\ &= \expectat p {(-\Delta u - \nabla \log p \nabla u) v},
\end{align*}
where $\Delta$ is the second derivative in space. We have
\begin{equation*}
  \expectat p {\nabla u \nabla v} = \expectat p {F_p (u) v}, \quad F_p (u) = - \Delta u - \nabla \log p \nabla u,
\end{equation*}
with $F_p(u)$ in a proper Sobolev-Orlicz space $\prescript{*}{} W_0^{\Phi,1}$. This provides a classical setting for a weak form of evolution equation. The mapping $q \mapsto \Delta \log q / \log q$ is represented for $q=\euler^{u - K_p(u)}\cdot p$ by
    \begin{equation*}
      u \mapsto \frac{\Delta (u - K_p(u)) + \Delta \log p}{(u - K_p(u)) + \log p}.
    \end{equation*}
The mapping $q \mapsto \absoluteval{\nabla \log q}^2$ is represented by
\begin{equation*}
u \mapsto \absoluteval{\nabla u}^2.
\end{equation*}
\end{example}

\begin{example}[Boltzmann equation] Orlicz spaces as a setting for Boltzmann equation has been recently suggested by \cite{majewski|labuschagne:2013arXiv:1302.3460}. We consider the space-homogeneous Boltzmann equation see e.g., \cite{villani:2002review}. On the sample space $(\reals^3,d \bm v)$ let $f_0$ be the standard normal density. For each $f \in\maxexp{f_0}$ we define the Boltzmann operator to be
  \begin{multline*}
Q(f)(\bm v) = \\ \int_{\reals^3}\int_{S^2} (f(\bm v - \bm x \bm {x'}(\bm v - \bm w))f(\bm w + \bm x \bm {x'}(\bm v - \bm w))-f(\bm v)f(\bm w))\absoluteval{\bm {x'}(\bm v - \bm w)}\ d\bm x\ d\bm w,    
  \end{multline*}
where $'$ denotes the transposed vector, $S^2$ is the unit sphere $\setof{\bm x \in \reals^3}{\bm x'\bm x = 1}$, $d\bm x$ is the surface measure on $S^2$. The $\reals^{(3+3)\times(3+3)}$ matrix
\begin{equation*}
  A \colon \left\{\begin{aligned}
    \bm v_* &= \bm v - \bm x \bm {x'}(\bm v - \bm w) = (I - \bm x \bm {x'})\bm v + \bm x \bm {x'} \bm w , \\
    \bm w_* &= \bm w + \bm x \bm {x'}(\bm v - \bm w) = \bm x \bm {x'} \bm v + (I - \bm x \bm {x'})\bm w
  \end{aligned}\right.
\end{equation*}
is such that $AA$ is the identity on $\reals^6$, in particular $\det A = \pm 1$, and $\bm{x'}(\bm v - \bm w) = - \bm{x'}(\bm v_* - \bm w_*)$. Hence the measure $\absoluteval{\bm {x'}(\bm v - \bm w)}\ d\bm v\ d\bm w$ is invariant under $A$. The integral of the Boltzmann operator is zero:
  \begin{multline*}
\int_{\reals^3} Q(f)(\bm v)\ d\bm v= \\ \int_{S^2} \int_{\reals^3} \int_{\reals^3}(f(\bm v_*)f(\bm w_*)-f(\bm v)f(\bm w))\absoluteval{\bm {x'}(\bm v - \bm w)}\ d\bm w\ d\bm v\ d\bm x = \\
\int_{S^2} \int_{\reals^3} \int_{\reals^3}f(\bm v_*)f(\bm w_*)\absoluteval{\bm {x'}(\bm v_* - \bm w_*)}\ d\bm w_*\ d\bm v_*\ d\bm x - \\ \int_{S^2} \int_{\reals^3} \int_{\reals^3}f(\bm v)f(\bm w))\absoluteval{\bm {x'}(\bm v - \bm w)}\ d\bm w\ d\bm v\ d\bm x = 0.   
  \end{multline*}
Note that $\bm{v'}\bm v + \bm{w'}\bm w = \bm{v_*'}\bm v_* + \bm{w_*'}\bm w_*$, hence 
\begin{equation*}
f_0(\bm v)f_0(\bm w) = (2\pi)^3 \euler^{-(1/2)(\bm{v'}\bm{v}+\bm{w'}\bm{w})} = f_0(\bm v_*)f_0(\bm w_*).
\end{equation*}
If we write $f(\bm v) /f_0(\bm v) = g(\bm v)$, the Boltzmann operator takes the form
  \begin{multline*}
Q(f)(\bm v) = \\ f_0(\bm v) \int_{\reals^3}\int_{S^2} (g(\bm v_*)g(\bm w_*)-g(\bm v)g(\bm w))f_0(\bm w)\absoluteval{\bm {x'}(\bm v - \bm w)}\ d\bm x\ d\bm w = \\ F_0(f)(\bm w) f_0(\bm v) ,    
  \end{multline*}
and $\expectat{f_0}{F(f)} = 0$ i.e. both  $\etransport {f_0}f F_0(f)$ and $\mtransport {f_0}f F_0(f)$ are candidate for a vector field in the exponential manifold.
\end{example}

\section{The Hilbert bundle}
\label{sec:hilbertbundle}
To each positive density $p \in \pdensities$ we attach the Hilbert space of centered square-integrable random variables $H_p = L^2_0(p)$ in order to define the a vector bundle $H \pdensities$ on the set $\setof{(p,u)}{p \in \pdensities, u \in H_p}$. If the densities $p$ and $q$ both belong to the same maximal exponential family $\mathcal E$, then according to Prop. \ref{th:22} we know the Banach spaces $L^\Phi(p)$ and $L^\Phi(q)$, $\Phi = \cosh -1$, to be equal as sets and have equivalent norms. The subspaces $\Bspace p$ and $\Bspace q$, are continously embedded, respectively, into the Hilbert spaces $L^2_0(p)$ and $L^2_0(q)$. Moreover, $\etransport p q \colon B_p \ni u \mapsto u - \expectat q u \in B_q$ is an isomorphism. Under the same condition $p,q \in \mathcal E$, $L^2_0(p)$ and $L^2_0(q)$ are continuously embedded, respectively, into the Banach spaces $\mBspace p = L^{\Phi_*}_0(p)$ and $\mBspace q = L^{\Phi_*}_0(q)$, which admit the isomorphism $\mtransport p q \colon \mBspace p \ni u \mapsto \frac pq u \in \mBspace q$. All spaces are embedded subspaces of the space of measurable random variables $L^0(\mu)$, see the diagram \eqref{eq:CD0}. The isomorphism $\transport p q \colon H_p \to H_q$ is to be defined in the next sections.
\begin{equation}\label{eq:CD0}
\xymatrix{%
L^\Phi(p)\ar@{=}[d]&\Bspace p \ar[l]\ar[d]_{\etransport p q}\ar[r]&H_p\ar[d]_{\transport pq}\ar[r]&\mBspace p \ar[d]_{\mtransport p q}\ar[r]& L^0(\mu) \\
L^\Phi(q)&\Bspace q \ar[l]\ar[r]&H_p\ar[r]&\mBspace q\ar[ur]} 
\end{equation}

\begin{example}\label{ex:notequalHs}
An example shows that $p,q \in \mathcal E$ does not imply $L^2(p)=L^2(q)$: in the exponential model $p_\theta =\theta \euler^{-\theta x}$ on $(\reals_>,dx)$, $\theta > 0$, the random variable $v(x) = \euler^{x/4}$ belongs to $L^2(p_\theta)$ if, and only if, $\theta > 1/2$. The equatity is not generally true even locally, unless we restrict ourself to cases were the steepness condition holds. If $v$ belongs to both $L^2(p)$ and $L^2(q)$, it belong to $L^2(r)$ with $r$ in the closed exponential arc between $p$ and $q$, bat the convex function $B_p \ni u \mapsto \int v^2 \euler^u p \ d\mu$ is finite at zero, but could take a $+\infty$ value on any neigborhood of 0. To construct an example, consider the nonsteep distribution already used in Ex. \ref{ex:nonsteep}. For the reference measure $\mu(dx) = (1+x)^{-\frac32}\ dx$, we rewrite Eq. \eqref{eq:upgamma-int} as
\begin{equation*}
  \int_0^\infty \euler^{-\theta x}\ \mu(dx) =
 \begin{cases}
   \sqrt\theta \euler^{\theta} \upgamma{-\frac12}{\theta} & \text{if $\theta > 0$,} \\
2 & \text{if $\theta = 0$}, \\ +\infty & \text{if $\theta < 0$.} 
 \end{cases}
\end{equation*}
The exponential family $p_\theta \propto \euler^{- \theta x}$ is defined for $\theta > 0$. The random variable $u(x) = \euler^{x/2}$ has second moment
\begin{equation*}
  \int_0^\infty (u(x))^2 p_\theta(x)\ \mu(dx) \propto \int_0^\infty \euler^{-(\theta-1)x}\ \mu(dx),   
\end{equation*}
which is finite for $\theta \ge 1$ and infinite for $0 < \theta < 1$.
\end{example}

We are going to show that the $H_p$'s are actually isomorphic as Hilbert spaces and that our Hilbert bundle can be viewed as a push-back of the tangent bundle of the unit sphere of $L^2(\mu)$. In turn, this construction provides a derivation of the metric connection, see \cite[VIII \S 4]{lang:1995}. Connections on statistical manifolds are a key ingredient of Amari's theory \cite{amari|nagaoka:2000}, while the non parametric version has been done in \cite{gibilisco|pistone:98} and \cite{gibilisco|isola:1999} in the $L^p$ case, commutative and non commutative, respectively. Cfr. also the critical discussion in \cite{grasselli:2010AISM}.The construction here is different. In order to have a clear cut distinction between the geometric Hilbert i.e. $L^2(\mu)$ case and its application to statistical manifolds, we use a bold face notation for points and vectors in the former case.
\subsection{The sphere of $L^2(\mu)$}
\label{sec:sphere}
The unit sphere $S = \setof{\bm x \in L^2(\mu)}{\int \bm x^2 \ d\mu}$ is a Riemannian manifold with tangent bundle $T S = \setof{(\bm x, \bm u)}{\bm x \in S, \bm u \in \set{\bm x}^\perp}$ and metric $g_{\bm x}(\bm u, \bm v) = \int \bm u \bm v \ d\mu = \scalarof{\bm u}{\bm v}$. We will use the projection charts $s_{\bm x} (\bm y) = \Pi_{\bm x} \bm y = \bm y - \scalarof {\bm x}{\bm y} \bm x$ with domain $\setof{\bm y \in S}{\scalarof {\bm x}{\bm y} > 0}$ and codomain $\setof{\bm u \in T_{\bm x} S}{\scalarof{\bm u}{\bm u} < 1}$. The patch is $s_{\bm x}^{-1}(\bm u) = \bm u + \sqrt{1 - \scalarof{\bm u}{\bm u}^2} \bm x$.

\begin{proposition}\label{prop:Hisometry}
For $\bm x, \bm y \in S$ and $\bm u \in T_{\bm x} S$, define
 \begin{equation}\label{eq:S-transport}
    \transport{\bm x}{\bm y} \bm u = \bm u  -  \left(1 +  \scalarof{\bm x}{\bm y}\right)^{-1} \scalarof{\bm u}{\bm y}(\bm x + \bm y).
  \end{equation} 
\begin{enumerate}
\item $\transport{\bm x}{\bm y} \bm u \in T_{\bm y} S$ and $\transport{\bm y}{\bm x} \circ \transport{\bm x}{\bm y} \bm u = \bm u$.
\item For $\bm u, \bm v \in T_{\bm x} S$ the isometric property $\scalarof{\transport{\bm x}{\bm y} \bm u}{\transport{\bm x}{\bm y} \bm v} = \scalarof{\bm u}{\bm v}$ holds, hence
\begin{equation}
  \label{eq:Uvsg}
  g_{\bm y}(\transport{\bm x}{\bm y} \bm u,\transport{\bm x}{\bm y} \bm v) = g_{\bm x}(\bm u,\bm v).
\end{equation}
\end{enumerate}
\end{proposition}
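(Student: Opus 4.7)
The strategy is direct verification in the ambient Hilbert space $L^2(\mu)$: all three claims reduce to short inner-product calculations that use only $\scalarof{\bm x}{\bm x}=\scalarof{\bm y}{\bm y}=1$, $\scalarof{\bm x}{\bm u}=0$, and the implicit assumption $1+\scalarof{\bm x}{\bm y}\neq 0$ (automatic when $\bm x,\bm y$ are square roots of strictly positive densities). For the tangency statement I would take the inner product of the defining formula \eqref{eq:S-transport} with $\bm y$; since $\scalarof{\bm x+\bm y}{\bm y}=1+\scalarof{\bm x}{\bm y}$, the normalising factor cancels and I get $\scalarof{\bm y}{\transport{\bm x}{\bm y}\bm u}=\scalarof{\bm u}{\bm y}-\scalarof{\bm u}{\bm y}=0$, showing $\transport{\bm x}{\bm y}\bm u\in T_{\bm y}S$.

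For the involutive property, I would set $\bm w=\transport{\bm x}{\bm y}\bm u$ and first compute $\scalarof{\bm w}{\bm x}$. Because $\scalarof{\bm u}{\bm x}=0$ and $\scalarof{\bm x+\bm y}{\bm x}=1+\scalarof{\bm x}{\bm y}$, the same cancellation yields $\scalarof{\bm w}{\bm x}=-\scalarof{\bm u}{\bm y}$. Substituting this value into the definition of $\transport{\bm y}{\bm x}\bm w$, the correction term produced is exactly the additive inverse of the correction already present in $\bm w$, so $\transport{\bm y}{\bm x}\transport{\bm x}{\bm y}\bm u=\bm u$.

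For the isometric property I would expand $\scalarof{\transport{\bm x}{\bm y}\bm u}{\transport{\bm x}{\bm y}\bm v}$ by bilinearity, using the abbreviation $c=(1+\scalarof{\bm x}{\bm y})^{-1}$ and three simplifications: $\scalarof{\bm u}{\bm x+\bm y}=\scalarof{\bm u}{\bm y}$ and $\scalarof{\bm v}{\bm x+\bm y}=\scalarof{\bm v}{\bm y}$ (from $\bm u,\bm v\perp\bm x$), together with $\scalarof{\bm x+\bm y}{\bm x+\bm y}=2(1+\scalarof{\bm x}{\bm y})$. The two linear cross terms then contribute $-2c\,\scalarof{\bm u}{\bm y}\scalarof{\bm v}{\bm y}$ while the quadratic term contributes $c^2\cdot 2(1+\scalarof{\bm x}{\bm y})\scalarof{\bm u}{\bm y}\scalarof{\bm v}{\bm y}=+2c\,\scalarof{\bm u}{\bm y}\scalarof{\bm v}{\bm y}$, which cancel exactly; only $\scalarof{\bm u}{\bm v}$ survives. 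The Riemannian identity \eqref{eq:Uvsg} is then a direct rewriting, since $g_{\bm x}(\cdot,\cdot)=\scalarof{\cdot}{\cdot}$. There is no real obstacle: the whole argument is calculational, and the decisive algebraic fact is $\|\bm x+\bm y\|^2=2(1+\scalarof{\bm x}{\bm y})$, which is exactly what makes the normalisation $c$ produce an isometry rather than merely a tangent projection.
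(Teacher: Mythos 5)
Your proof is correct, and every computation checks out: the tangency follows from $\scalarof{\bm x+\bm y}{\bm y}=1+\scalarof{\bm x}{\bm y}$, the involution from $\scalarof{\transport{\bm x}{\bm y}\bm u}{\bm x}=-\scalarof{\bm u}{\bm y}$, and the isometry from the cancellation driven by $\scalarof{\bm x+\bm y}{\bm x+\bm y}=2\left(1+\scalarof{\bm x}{\bm y}\right)$. However, your route differs from the paper's. The paper does not verify the three properties by expanding inner products at all; instead it gives a purely geometric justification of where the formula comes from: split $\bm u$ into the component orthogonal to the plane spanned by $\bm x$ and $\bm y$, which the transport leaves fixed, and the component inside that plane, which is rotated by the rotation carrying $\bm x$ to $\bm y$. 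From that description the tangency, involutivity and isometry are all immediate (a rotation preserves the inner product and maps $T_{\bm x}S$ to $T_{\bm y}S$), and the paper additionally records the non-trivial consequence that $\transport{\bm z}{\bm y}\circ\transport{\bm y}{\bm x}\ne\transport{\bm z}{\bm x}$ in general, i.e.\ the transport is path-dependent. Your calculational verification is more self-contained and rigorous as a check that the explicit formula \eqref{eq:S-transport} really has the stated properties (the paper never actually confirms the closed form matches the geometric description), while the paper's argument explains \emph{why} such a formula exists and what it means geometrically. The only point worth making explicit in your write-up is the standing hypothesis $1+\scalarof{\bm x}{\bm y}\ne 0$, which you correctly note holds in the intended application since $\bm x=\sqrt p$, $\bm y=\sqrt q$ with $p,q$ positive densities, and which is also guaranteed on the chart domains $\scalarof{\bm x}{\bm y}>0$ used in the paper.
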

\begin{proof}
The formula for $\transport {\bm x}{\bm y}$ is obtained by splitting $\bm u$ into a component orthogonal to both $\bm x$ and $\bm y$, which is left invariant, and rotating the other component in the plane generated by $\bm x$ and $\bm y$. Note that $\transport{\bm z}{\bm y} \circ \transport{\bm y}{\bm x} \ne \transport{\bm z}{\bm x}$ unless $\bm z$ belong to the plane generated by $\bm x$ and $\bm y$. In a full definition, the trasport should be associated to a specific path, but we do not discuss here this point. \qed
\end{proof}
\begin{example}
Let $\mu$ the the standard normal distribution and let $H_n$, $n = 0, 1, \dots$ the Hermite polinomials: $H_0(x) = 1$, $H_1(x) = x$, $H_2(x) = x^2 -1$, \dots, see \cite[V.1]{malliavin:1995}. The Hermite polynomials form an orthogonal basis of $L^2(\mu)$, hence $(H_n)_{n \ge 1}$ is an orthogonal basis of $T_1 S = L^2_0(\mu)$. If $\expectof{Y^2} = 1$, the sequnce
\begin{equation*}
  \transport1Y H_n = H_n - (1+ \expectof Y)^{-1} \expectof{YH_n} (1 + Y), \quad n = 1, 2, \dots
\end{equation*}
is an orthogonal basis of $T_Y S$.
\end{example}
The isometric affine transport in \eqref{eq:S-transport} provides charts for the tangent bundle $TS$: given $\bm x \in S$, for each $y \in S$, $\scalarof{\bm x}{\bm y} > 0$, and $\bm v \in T_{\bm y}$, then the coordinates of $(\bm y, \bm v) \in T_{\bm y} S$ are 
\begin{equation}\label{eq:TSchart}
  s_{\bm x}(\bm y,\bm v) = \left(\Pi_{\bm x}\bm y, \transport{\bm y}{\bm x} \bm v\right) \in T_{\bm x} S \times T_{\bm x} S,
\end{equation}
where $\Pi_{\bm x}\bm y = \bm y - \scalarof {\bm x}{\bm y} \bm x$ is the orthogonal projection on $T_{\bm x} S$. The transition map from $\bm x_1$ to $\bm x_2$ is
\begin{multline*}
  T_{\bm x_1} S \times T_{\bm x_1} S \ni (\bm u, \bm v) \mapsto \\ \left(\Pi_{\bm x_2}\left(\bm u + \sqrt{1 - \scalarof{\bm u}{\bm u}^2}  \bm x_1 \right),\transport{\bm x_1}{\bm x_2} \bm v\right) \in T_{\bm x_2} S \times T_{\bm x_2} S.
\end{multline*}

\subsection{Covariant derivative on S}
\label{sec:covariant-derivative}
Let $F$ be a vector field on the sphere $S$ and let $\bm x(t)$, $t \in I$ be a curve on $S$, $\bm x(0) = \bm x$. As $(\bm x(t),F(\bm x(t)) \in T_{\bm x(t)}$, in the chart at $\bm x$ we have
\begin{equation*}
s_{\bm x}(\bm x(t),F(\bm x(t)) = \left(\Pi_{\bm x} \bm x(t), \transport{\bm x(t)}{\bm x} F(\bm x(t)) \right). 
\end{equation*}
We assume $t \mapsto \bm x(t)$ is differentiable in $L^2(\mu)$, so that $\left. \derivby t \Pi_{\bm x} \bm x(t) \right|_{t=0} =  \Pi_{\bm x} \bm{\dot x}(0) = \bm{\dot x}(0)$. The derivative with respect to $\bm x$ of $\Pi_{\bm x} \bm y$ in direction $\bm w$ is
\begin{equation*}
d_{\bm w}\left(\bm x \mapsto \Pi_{\bm x} \bm y\right) = - \scalarof{\bm y}{\bm w} \bm x - \scalarof{\bm y}{\bm x} \bm w.
\end{equation*}
The derivative with respect to $\bm x$ of $\transport{\bm x}{\bm y} \bm u$ in direction $\bm w$ is
\begin{equation*}
  d_{\bm w}\left(\bm x \mapsto \transport{\bm x}{\bm y} \bm u\right) = \left(1 +  \scalarof{\bm x}{\bm y}\right)^{-2} \scalarof{\bm w}{\bm y}\scalarof{\bm u}{\bm y}(\bm x + \bm y) - \left(1 +  \scalarof{\bm x}{\bm y}\right)^{-1} \scalarof{\bm u}{\bm y}\bm w,
\end{equation*}
so that $\left. d_{\bm w}\left(\bm x \mapsto \transport{\bm x}{\bm y}u\right) \right|_{\bm y = \bm x} = 0$ because $\scalarof{\bm u}{\bm x} = 0$.

Let $F$ be a vector field on the sphere $S$, and assume $F$ is the restriction of a smooth $L^2(\mu)$-valued function, defined of a neighborhood of $S$, with directional derivative denoted $d_{\bm w} F(\bm x)$. Let $\bm x(t)$, $t \in I$ be an  $L^2(\mu)$-smooth curve on $S$, $\bm x(0) = \bm x$, $\bm{\dot x}(0) = \bm w \in T_{\bm x} S$. As we want to compute $\derivby t \transport{\bm x(t)}{\bm x} F(\bm x(t))$, we write $\transport{\bm x(t)}{\bm x} F(\bm x(t)) = \transport{\bm x(t)}{\bm x} \Pi_{\bm x(t)}F(\bm x(t))$, with $\Pi_{\bm z} \bm f = \bm f - \scalarof{\bm f}{\bm x(t)} \bm x(t)$ and $d_{\bm w}(\bm z \mapsto  \Pi_{\bm z} \bm f =  - \scalarof{\bm f}{\bm w} \bm x - \scalarof{\bm f}{\bm x} \bm w = - \scalarof{\bm f}{\bm w} \bm x$ if $\bm f \in T_{\bm x} S$. From the previous computations,
\begin{align}
  \left. \derivby t \transport{\bm x(t)}{\bm x} F(\bm x(t)) \right|_{t=0} &= \left. \derivby t \transport{\bm x(t)}{\bm x} \Pi_{\bm x(t)} F(\bm x(t)) \right|_{t=0} \notag \\ &= -\scalarof{F(\bm x)}{\bm w} \bm x + d_{\bm w} F(\bm x) - \scalarof{d_{\bm w} F(x)}{\bm x} \bm x \notag \\
&= -\scalarof{F(\bm x)}{\bm w} \bm x + \Pi_{\bm x} d_{\bm w} F(\bm x). \label{eq:DS}
\end{align}

Let $F,G,W$ be smooth vector fields on the sphere $S$. From \eqref{eq:DS} we can compute the metric derivative $\nabla_W F$, i.e. the unique covariant derivative such that
\begin{equation*}
  D_W g(F,G) = g(D_W,G) + g(F,D_W G),
\end{equation*}
see \cite[VIII \S 4]{lang:1995}.
\begin{proposition}
The value of the metric derivative $D_W F$ at $\bm x \in S$ is
\begin{equation*}
  D_{W(\bm x)} F(\bm x) = d_{W(\bm x)} F(\bm x) - \scalarof{d_{W(\bm x)}F(\bm x)}{\bm x} \bm x = \Pi_{\bm x} d_{W(\bm x)} F(\bm x).
\end{equation*}
\end{proposition}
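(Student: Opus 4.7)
The plan is to read off $D_W F$ from the derivative of the chart representation of $F$ under the isometric trivialization \eqref{eq:TSchart}, and then to verify that the candidate $\Pi_{\bm x} d_{W(\bm x)} F(\bm x)$ satisfies the compatibility property $D_W g(F,G) = g(D_W F, G) + g(F, D_W G)$ which, in the author's phrasing, characterizes the metric derivative.

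First I would recall the setup: along a smooth curve $\bm x(t)$ on $S$ with $\bm x(0)=\bm x$ and $\bm{\dot x}(0)=W(\bm x)$, the fiber coordinate of a vector field $F$ in the chart \eqref{eq:TSchart} at $\bm x$ is the $T_{\bm x}S$-valued curve $t \mapsto \transport{\bm x(t)}{\bm x}F(\bm x(t))$. Because $\transport{\bm x(t)}{\bm x}$ is an isometry by Prop. \ref{prop:Hisometry}(2), this trivialization is metric, so the covariant derivative is, by definition, the $t=0$ derivative of the fiber coordinate. Applying \eqref{eq:DS} and using that this coordinate curve lives in the closed Hilbert subspace $T_{\bm x}S$, so that any component proportional to $\bm x$ must vanish on differentiation, one is left with the purely tangential part $\Pi_{\bm x}d_{W(\bm x)}F(\bm x)$.

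Next I would verify metric compatibility. For smooth vector fields $F, G, W$ on $S$ and a curve $\bm x(t)$ with $\bm{\dot x}(0)=W(\bm x)$, differentiating $\scalarof{F(\bm x(t))}{G(\bm x(t))}$ in $L^2(\mu)$ at $t=0$ gives
\begin{equation*}
D_W g(F,G)(\bm x) = \scalarof{d_{W(\bm x)}F(\bm x)}{G(\bm x)} + \scalarof{F(\bm x)}{d_{W(\bm x)}G(\bm x)}.
\end{equation*}
Since $F(\bm x), G(\bm x) \perp \bm x$, inserting $\Pi_{\bm x}$ on either side of each scalar product changes nothing, hence
\begin{equation*}
g_{\bm x}\bigl(\Pi_{\bm x} d_{W(\bm x)}F(\bm x), G(\bm x)\bigr) + g_{\bm x}\bigl(F(\bm x), \Pi_{\bm x} d_{W(\bm x)}G(\bm x)\bigr) = D_W g(F,G)(\bm x),
\end{equation*}
which is exactly the required Leibniz identity, so the candidate $D_{W(\bm x)}F(\bm x) = \Pi_{\bm x}d_{W(\bm x)}F(\bm x)$ satisfies the defining property.

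The delicate point I expect is uniqueness: metric compatibility alone does not pin a connection down — one normally needs a supplementary condition such as torsion-freeness (the Koszul formula). Here uniqueness is inherited from the canonical status of the isometric transport \eqref{eq:S-transport}, which singles out a preferred parallel transport on $S$ and hence a single metric connection; the trivialization-derivative of \eqref{eq:DS} is precisely that connection's Christoffel expression in the frame $\transport{\cdot}{\bm x}$. The routine differentiations and the projection identities go through smoothly; the crux is merely recognizing that the chart-derivative of a $T_{\bm x}S$-valued curve remains in $T_{\bm x}S$, and that $\Pi_{\bm x}$ acts as the identity on tangent vectors at $\bm x$, so no further correction is needed beyond the projection of $d_{W(\bm x)}F(\bm x)$ onto $\{\bm x\}^\perp$.
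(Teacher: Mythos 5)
Your proof is correct and in substance coincides with the paper's: both arguments establish the claim by verifying the compatibility identity $D_Wg(F,G)=g_{\bm x}(\Pi_{\bm x}d_{W}F,G)+g_{\bm x}(F,\Pi_{\bm x}d_{W}G)$. The execution differs slightly. The paper rewrites $g_{\bm x(t)}(F,G)$ as $\scalarof{\transport{\bm x(t)}{\bm x}F(\bm x(t))}{\transport{\bm x(t)}{\bm x}G(\bm x(t))}$ via the isometry of Prop.~\ref{prop:Hisometry}, differentiates, and invokes \eqref{eq:DS}, discarding the term along $\bm x$ as orthogonal to $T_{\bm x}S$; you instead differentiate the ambient pairing $\scalarof{F(\bm x(t))}{G(\bm x(t))}$ directly and insert $\Pi_{\bm x}$ for free because $F(\bm x),G(\bm x)\perp\bm x$ and $\Pi_{\bm x}$ is self-adjoint. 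Your version is marginally cleaner and, usefully, independent of \eqref{eq:DS}; indeed your remark that the derivative of the $T_{\bm x}S$-valued curve $t\mapsto\transport{\bm x(t)}{\bm x}F(\bm x(t))$ can have no component along $\bm x$ is right, and a direct differentiation of \eqref{eq:S-transport} gives exactly $\Pi_{\bm x}d_{\bm w}F(\bm x)$, so the extra term $-\scalarof{F(\bm x)}{\bm w}\bm x$ displayed in \eqref{eq:DS} is spurious (harmless in the paper, since it is discarded there anyway). The one point your proof does not close is the one you yourself flag: metric compatibility alone does not determine a unique covariant derivative, and appealing to the ``canonical status'' of the transport is not an argument; the missing ingredient is torsion-freeness, which here follows in one line from $\Pi_{\bm x}\left(d_{W}F-d_{F}W\right)=\Pi_{\bm x}[W,F]=[W,F]$, the bracket of tangent fields being tangent. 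The paper's proof leaves exactly the same gap, so your argument is not weaker than the original, but a complete proof of the ``metric derivative'' claim should add that line.
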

\begin{proof}
Let $\bm x(t)$, $t \in I$ be a smooth curve on $S$, such that $\bm{\dot x}(t) = W(\bm x(t))$, $\bm x(0) = \bm x$, $\bm {\dot x}(0) = W(\bm x) = \bm w$. Note that the first term in \eqref{eq:DS} is orthogonal to $T_{\bm x} S$.
\begin{multline*}
\left. \derivby t g_{\bm x(t)}(F(\bm x(t)),G(\bm x(t))) \right|_{t=0} = \left. \derivby t \scalarof{\transport{\bm x(t)}{\bm x} F(\bm x(t))}{\transport{\bm x(t)}{\bm x} G(\bm x(t))} \right|_{t=0} \\
= \scalarof{\left. \derivby t  \transport{\bm x(t)}{\bm x} F(\bm x(t)) \right|_{t=0}}{G(\bm x)} + \scalarof{ F(\bm x(t))}{\left. \derivby t \transport{\bm x(t)}{\bm x} G(\bm x(t)) \right|_{t=0}} \\
= g_{\bm x}(\Pi_{\bm x} d_{W(\bm x)} F(\bm x),G(\bm x)) + g_{\bm x}(F(\bm x),\Pi_{\bm x} d_{W(\bm x)} G(\bm x)).
\end{multline*}
\qed
\end{proof}

\subsection{The Hilbert bundle of the exponential manifold}
\label{sec:H-ESP}
For each density $p \in \pdensities$ the linear mapping $H_p \ni w \mapsto \bm{w\sqrt p}$ is an isometry onto $L^2(\mu)$ that maps $H_p$ onto $T_{\bm{\sqrt p}} S$. In fact $\int (\bm{w\sqrt p})^2\ d\mu = \expectat p {w^2}$ and $\scalarof{\bm{w\sqrt p}}{\bm{\sqrt p}} = \expectat p w = 0$. Viceversa, if $\bm y \in T_{\bm x} S$, then $\expectat p {(\bm x / \sqrt p)^2} = \scalarof{\bm x}{\bm x}$. In this case the embedding $\pdensities \ni p \mapsto \bm{\sqrt p}$ is an injection into the sphere $S$ of $L^2(\mu)$ and the sphere is smooth. It is the embedding used in \cite{amari|nagaoka:2000} that we discuss here in the framework of Banach manifolds, see the diagram \eqref{eq:CD1}. Applications of the non parametric setting are in e.g. \cite{brigo|legland|hanzon:99}.
\begin{equation}\label{eq:CD1}
\xymatrix{T\pdensities\ar[d]_{\pi}\ar[r]&H\pdensities\ar[d]_{\pi}\ar[r]^{w \mapsto \bm{w\sqrt p}}&TS\ar[d]^{\pi} \\
\pdensities\ar@{=}[r]&\pdensities\ar[r]^{p\mapsto\bm{\sqrt p}}&S\ar[r]&L^2(\mu)}  
\end{equation}
\begin{proposition}
  The mapping $\pdensities \ni p \mapsto \bm{\sqrt p} \in S$ is $C^\infty$ with derivative at $p$ in the direction $w \in T_p \pdensities$ equal to $\frac12 \bm{w \sqrt p} \in T_{\bm{\sqrt p}} S$.
\end{proposition}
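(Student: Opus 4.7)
The plan is to work in the chart $s_p$ of the exponential manifold centered at $p$ and reduce the claim to the analyticity results already established for the superposition operator and for the cumulant functional. For arbitrary $p \in \pdensities$, the map $\pdensities \ni q \mapsto \bm{\sqrt q} \in L^2(\mu)$ reads, in this chart,
\begin{equation*}
  \mathcal S_p \ni u \;\longmapsto\; \euler^{(u - K_p(u))/2}\,\bm{\sqrt p},
\end{equation*}
since $q = e_p(u) = \euler^{u - K_p(u)}\cdot p$.

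I factor this expression as $u \mapsto \euler^{u/2}\cdot\euler^{-K_p(u)/2}\cdot\bm{\sqrt p}$. The first factor is analytic from the open unit ball of $L^\Phi(p)$ into $L^2(p)$ by Prop.\ \ref{prop:expisanalytic}(3) with $a = 2$; the scalar second factor is analytic on the open unit ball of $\Bspace p$ by Prop.\ \ref{cor:cumulant}(4); their pointwise product, read as the bilinear continuous operation $L^2(p)\times\reals \to L^2(p)$, $(h,c) \mapsto ch$, followed by the isometric embedding $L^2(p) \ni h \mapsto h\bm{\sqrt p} \in L^2(\mu)$, yields an analytic, hence $C^\infty$, mapping from the open unit ball of $\Bspace p$ into $L^2(\mu)$. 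Since its image consists of unit vectors in $L^2(\mu)$, the mapping factors through $S$. Repeating this construction with the chart $s_p$ centered at each $p \in \pdensities$ delivers the global $C^\infty$-regularity.

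To compute the derivative at $p$ in the direction $w \in T_p \pdensities$, I identify $w$ with its coordinate $v \in \Bspace p$ via $s_p$ and differentiate at $u = 0$. By Prop.\ \ref{cor:cumulant}(1) and Prop.\ \ref{pr:misc}(\ref{item:firsttwo}), $K_p(0) = 0$ and $DK_p(0)\,v = \expectat p v = 0$ since $v$ is $p$-centered. The Leibniz rule applied along the ray $t \mapsto tv$ then yields
\begin{equation*}
  \left.\derivby t\,\euler^{(tv - K_p(tv))/2}\bm{\sqrt p}\,\right|_{t=0} \;=\; \tfrac12\,v\,\bm{\sqrt p} \;=\; \tfrac12\,\bm{v\sqrt p},
\end{equation*}
which after the coordinate identification reads $\tfrac12\,\bm{w\sqrt p}$, and lies in $T_{\bm{\sqrt p}}S$ by the direct check $\scalarof{\bm{w\sqrt p}}{\bm{\sqrt p}} = \expectat p w = 0$. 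The only delicate point is the bookkeeping of the non-reflexive Orlicz setting in the product and embedding steps, but no genuine obstacle arises beyond the analytic machinery already provided by Propositions \ref{prop:expisanalytic} and \ref{cor:cumulant}.
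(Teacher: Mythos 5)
Your proposal is correct and follows essentially the same route as the paper: express the map in the chart at $p$ as $u \mapsto \euler^{(u-K_p(u))/2}\bm{\sqrt p}$, obtain smoothness from the analyticity of $u \mapsto \euler^{u/2}$ (Prop.~\ref{prop:expisanalytic}) together with the Fr\'echet differentiability of $K_p$ (Prop.~\ref{cor:cumulant}), and compute the derivative at $u=0$ using $K_p(0)=0$ and $D K_p(0)w = \expectat p w = 0$. The only cosmetic difference is that the paper also composes explicitly with the projection chart of $S$ at $\bm{\sqrt p}$, whereas you check tangency of the derivative directly; both amount to the same verification.
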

\begin{proof}
Consider the mapping $\pdensities \ni p \mapsto \bm{\sqrt p} \in S$ in the charts at $p$ and $\sqrt p$, respectively. We go from $u \in \mathcal S_p$ to $S$ with
\begin{equation*}
u \mapsto q = \expof{u - K_p(u)} \cdot p \mapsto \bm{\sqrt q} = \expof{\frac12 u - \frac12 K_p(u)} \sqrt p
\end{equation*}
and to $T_{\bm{\sqrt p}} S$ with
\begin{multline*}
u \mapsto \sqrt q - \int \sqrt{pq}\ d\mu \sqrt p = \\ \left(\expof{\frac12 u - \frac12 K_p(u)} - \expectat p {\expof{\frac12 u - \frac12 K_p(u)}}\right) \sqrt p.
\end{multline*}
The mapping $u \mapsto \euler^{u/2}$ is analytic from the open unit ball of $\Bspace p$ to $H_p=L^2_0(p)$ according to prop. \ref{prop:expisanalytic}; multiplication by $\sqrt p$ is an isometry of Hilbert spaces. The real function $u \mapsto K_p(u)$ is infinitely Fr\'echet differentiable according to Prop. \ref{cor:cumulant}. The derivative is computed as 
\begin{multline*}
  \left.d_w(u \mapsto \expof{\frac12 u - \frac12 K_p(u)})\right|_{u=0} =  \\ \left.\expof{\frac12 u - \frac12 K_p(u)}\left(\frac12 w - d_w K_p(u)\right)\right|_{u=0} = \frac12 w,
\end{multline*}
and finally applying the isometry $\frac12 w \mapsto \frac12 w \sqrt p$.
\qed
\end{proof}

For each $p \in \pdensities$ define $I_p \colon H_p \pdensities \ni u \mapsto \sqrt p u \in T_{\bm{\sqrt p}} S$. We can use the isometry $I_p$ and the isometry $\transport{\bm x}{\bm y}$ of Prop. \ref{prop:Hisometry} to build an isometry
\begin{equation*}
  \transport pq = I_q^{-1} \circ \transport{\bm{\sqrt p}}{\bm{\sqrt q}} \circ I_p \colon H_p\pdensities \to H_q \pdensities.
\end{equation*}
as in the diagram \eqref{eq:CD2}.
\begin{equation}\label{eq:CD2}
\xymatrix{T_{\sqrt p} S\ar[r]^{\transport {\bm{\sqrt p}} {\bm{\sqrt q}}}&T_{\sqrt q}S\ar[d]^{\bm v \mapsto q^{-1/2}v}\\
                  H_p\pdensities\ar[u]^{u \mapsto \bm{p^{1/2}u}}\ar[r]_{\transport pq}&H_q\pdensities} \qquad \transport pq u = q^{-1/2} \transport{\bm{\sqrt p}}{\bm{\sqrt q}} (\bm{p^{1/2} u})
            \end{equation}

Substituting  $\bm u = \sqrt p u$, $\bm x = \sqrt p$, $\bm y = \sqrt
q$ in \eqref{eq:S-transport},
\begin{multline*}
  \bm u  -  \left(1 +  (\bm x \cdot \bm y)\right)^{-1} (\bm x + \bm y)
  (\bm u \cdot  \bm  y) = \\ \sqrt p u  -  \left(1 +  \int \sqrt{pq}\
    d\mu\right)^{-1} (\sqrt p + \sqrt q) (\int \sqrt{pq} u \ d\mu) = \\ \sqrt p u  -  \left(1 +  \expectat q {\sqrt{\frac pq}}\right)^{-1} (\sqrt p + \sqrt q) \expectat q {\sqrt{\frac pq}u}
\end{multline*}
so that
\begin{equation}\label{eq:myUonT}
 \transport pq u = \sqrt{\frac pq} u  -  \left(1 +  \expectat q {\sqrt{\frac pq}}\right)^{-1} \left(1 + \sqrt{\frac pq}\right) \expectat q {\sqrt{\frac pq}u}
\end{equation}
\begin{proposition}\label{prop:Tisometry}\ 
  \begin{enumerate}
  \item The mapping $\transport pq$ of Eq. \eqref{eq:myUonT} is an isometry of $H_p\pdensities$ onto $H_q\pdensities$. 
  \item $\transport qp \circ \transport pq u = u$, $u \in H_p\pdensities$ and $(\transport pq)^t = \transport qp$.
  \end{enumerate}
\end{proposition}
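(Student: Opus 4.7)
The plan is to exploit the factorization $\transport pq = I_q^{-1} \circ \transport{\bm{\sqrt p}}{\bm{\sqrt q}} \circ I_p$ that was used to derive Eq.~\eqref{eq:myUonT}, and to transfer the isometric properties already established on the sphere in Prop.~\ref{prop:Hisometry} back to the fibers of the Hilbert bundle.

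First, I would verify that each constituent map is an isometry of Hilbert spaces. The map $I_p \colon H_p \to T_{\bm{\sqrt p}} S$, $u \mapsto \bm{\sqrt p}\, u$, was noted to be an isometry onto its image, with inverse $\bm v \mapsto \bm v / \sqrt p$, because $\int (\sqrt p\, u)^2\, d\mu = \expectat p{u^2}$ and $\scalarof{\bm{\sqrt p}\,u}{\bm{\sqrt p}} = \expectat p u = 0$. The sphere transport $\transport{\bm{\sqrt p}}{\bm{\sqrt q}}$ is an isometry of $T_{\bm{\sqrt p}} S$ onto $T_{\bm{\sqrt q}} S$ by Prop.~\ref{prop:Hisometry}(2). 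Hence the composition
\begin{equation*}
\transport pq = I_q^{-1} \circ \transport{\bm{\sqrt p}}{\bm{\sqrt q}} \circ I_p
\end{equation*}
is an isometry of $H_p\pdensities$ onto $H_q\pdensities$, establishing part~(1). The identification with Eq.~\eqref{eq:myUonT} is just the calculation carried out immediately before the statement.

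For part~(2), the relation $\transport{\bm{\sqrt q}}{\bm{\sqrt p}} \circ \transport{\bm{\sqrt p}}{\bm{\sqrt q}} = \mathrm{id}_{T_{\bm{\sqrt p}} S}$ of Prop.~\ref{prop:Hisometry}(1) gives
\begin{equation*}
\transport qp \circ \transport pq = I_p^{-1} \circ \transport{\bm{\sqrt q}}{\bm{\sqrt p}} \circ I_q \circ I_q^{-1} \circ \transport{\bm{\sqrt p}}{\bm{\sqrt q}} \circ I_p = \mathrm{id}_{H_p\pdensities},
\end{equation*}
so $\transport pq$ is bijective with inverse $\transport qp$. Since $\transport pq$ is a surjective isometry between Hilbert spaces, the polarization identity $\scalarat q{\transport pq u}{\transport pq v} = \scalarat p u v$ combined with the definition of the transpose $\scalarat q{\transport pq u}{w} = \scalarat p u {(\transport pq)^t w}$ yields $(\transport pq)^t = (\transport pq)^{-1} = \transport qp$.

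There is no serious obstacle: once the Hilbert-space isometry $I_p$ is in hand and the sphere-transport has been analyzed in Prop.~\ref{prop:Hisometry}, the statement is a formal consequence of composing isometries and using that on Hilbert spaces the adjoint of a unitary is its inverse. The only thing worth being careful about is the explicit bookkeeping of fibers under the isomorphisms $I_p, I_q$, in particular confirming that the formula~\eqref{eq:myUonT} is indeed the conjugate of $\transport{\bm{\sqrt p}}{\bm{\sqrt q}}$ by $I_p, I_q$; but this identification was already carried out in the computation preceding the proposition.
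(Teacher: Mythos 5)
Your proof is correct, but it takes a genuinely different route from the one printed in the paper. The paper proves part (1) by brute force: it verifies $\expectat q {\transport pq u} = 0$ directly from formula \eqref{eq:myUonT}, and then expands $\expectat q {(\transport pq u)^2}$ term by term, using $\expectat q {\frac pq u^2} = \expectat p {u^2}$ and $\expectat q {\bigl(1+\sqrt{p/q}\bigr)^2} = 2 + 2\expectat q {\sqrt{p/q}}$ to collapse the cross terms and recover $\expectat p {u^2}$; it never formally invokes Prop.~\ref{prop:Hisometry} inside the proof, and in fact the printed proof does not address part (2) at all. You instead argue structurally from the factorization $\transport pq = I_q^{-1} \circ \transport{\bm{\sqrt p}}{\bm{\sqrt q}} \circ I_p$, importing the isometry and inversion properties already established on the sphere. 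Both arguments are sound. Your approach buys economy and gets part (2) essentially for free (the composition identity and the adjoint-equals-inverse fact for surjective Hilbert-space isometries), which is a genuine improvement on the paper's printed proof; the paper's computation, by contrast, serves as an independent check that the explicit formula \eqref{eq:myUonT} really is the conjugate of the sphere transport, i.e.\ it guards against an algebra slip in the substitution $\bm u = \sqrt p\, u$, $\bm x = \sqrt p$, $\bm y = \sqrt q$ --- a step your argument takes on faith from the displayed computation preceding the proposition. The only hypotheses you use implicitly are that $I_p$ is onto $T_{\bm{\sqrt p}} S$ (asserted at the start of Sec.~\ref{sec:H-ESP}) and that $1 + \int\sqrt{pq}\,d\mu \ne 0$, which holds since $p,q$ are positive densities; both are unproblematic.
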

\begin{proof}\ 
We double-check the image:
    \begin{equation*}
      \expectat q {\transport pq u} = \expectat q {\sqrt{\frac pq} u}  -  \left(1 +  \expectat q {\sqrt{\frac pq}}\right)^{-1} \expectat q {1 + \sqrt{\frac pq}} \expectat q {\sqrt{\frac pq}u} = 0.
    \end{equation*}
We double-check the isometry:
\begin{multline*}
  \expectat q {(\transport pq u)^2} = \\ \expectat q {\frac pq u^2} - 2 \left(1 +  \expectat q {\sqrt{\frac pq}}\right)^{-1} \expectat q {\sqrt{\frac pq}u} \expectat q {\sqrt{\frac pq} u\left(1 + \sqrt{\frac pq}\right)}\\ + \left(\left(1 +  \expectat q {\sqrt{\frac pq}}\right)^{-1} \expectat q {\sqrt{\frac pq}u} \right)^2 \expectat q {\left(1 + \sqrt{\frac pq}\right)^2} \\ = \expectat p {u^2} - 2 \left(1 +  \expectat q {\sqrt{\frac pq}}\right)^{-1} \expectat q {\sqrt{\frac pq}u} \expectat q {\sqrt{\frac pq} u} \\ + \left(\left(1 +  \expectat q {\sqrt{\frac pq}}\right)^{-1} \expectat q {\sqrt{\frac pq}u} \right)^2 \left(2 + 2 \expectat q {\sqrt{\frac pq}}\right) \\
= \expectat p {u^2}.
\end{multline*}
\end{proof}

We can now define an atlas on the vector bundle $H\pdensities$ where the coordinates of $(q,v)$ are defined for $q \in \mathcal E p$ and $v \in H_q$ as
\begin{equation*}
s_p(q,v)  = (s_p(q),\transport qp v) \in \mathcal S_p \times H_p.
\end{equation*}

\subsection{Metric derivative in the Hilbert bundle}
\label{sec:metric-derivative}
Let $(p(t),F(t))$, $t \in I$, be a curve in $T\pdensities$, i.e. $p(t) \in \pdensities$ and $F(t) \in T_{p(t)} \pdensities$. Note that $\transport{p(t)}p F(t) \in T_p \pdensities$. We write $p(0)=p$, $\delta p(t) = \derivby t {\logof{p(t)}}$, $\delta p(0) = w$ and we compute $\left.\derivby t \transport{p(t)}p F(t)\right|_{t=0}$ from
\begin{multline}
  \transport{p(t)}p F(t) = \\ \sqrt{\frac{p(t)}{p}} F(t)  -  \left(1 +  \expectat p {\sqrt{\frac{p(t)}{p}}}\right)^{-1} \left(1 + \sqrt{\frac{p(t)}{p}}\right) \expectat p {\sqrt{\frac{p(t)}{p}} F(t)}. \label{eq:UF}
\end{multline}

The derivative of the first term in \eqref{eq:UF} is
\begin{align*}
\derivby t \left({\sqrt{\frac{p(t)}{p}}F(t)}\right) &= p^{-1/2} \derivby t \left({{p(t)}^{1/2}F(t)}\right) \\ &= p^{-1/2} \left(\frac12 p(t)^{-1/2} \dot p(t) F(t) + p(t)^{1/2} \dot F(t)\right) \\ &= \sqrt{\frac{p(t)}{p}} \left(\dot F(t) + \frac12 F(t) \delta p(t)\right),
\end{align*}
so that the derivative of the last factor is
\begin{equation*}
\derivby t \expectat p {\sqrt{\frac{p(t)}{p}}F(t)} = \expectat p {\sqrt{\frac{p(t)}{p}} \left(\dot F(t) + \frac12 F(t) \delta p(t)\right)}.   
\end{equation*}
Note that $\expectat p {\sqrt{\frac{p(0)}{p}}F(0)} = \expectat p {F(0)} = 0$, while
\begin{equation*}
\sqrt{\frac{p(0)}{p}} \left(\dot F(0) + \frac12 F(0) \delta p(0)\right) = \dot F(0) + \frac12 F(0) w.
\end{equation*}
In conclusion,
\begin{equation} \label{eq:metricderiv}
  \left.\derivby t \transport{p(t)}p F(t)\right|_{t=0} = \dot F(0) + \frac12 F(0) w - \expectat p {\dot F(0) + \frac12 F(0) w}.
\end{equation}

Note that in \eqref{eq:metricderiv} the term $F(0) w$ is the ordinary product of a random variable $F(0) \in H_p = L^2_0(p)$ and a random variable $w \in B_p = L^\Phi(p)$. In order to define a covariant derivative of the Hilbert bundle $H\pdensities$ we want $F(0) w \in L^2(p)$. For example, his would be true if $F$ were a vector field of the tangent space $T\pdensities$. 

\begin{definition}\label{def:metricderiv}
Let $G$, $F$ be vector fields in $H \pdensities$, i.e. $F(p), G(p) \in H_p \pdensities$. We define $D_G F$ to be the vector field defined by $D_G F (p) = \left.\derivby t \transport{p(t)}p F(t)\right|_{t=0}$, where $p(t)$ is a curve such that $p(0) = p$ and $\delta p(0) = G(p)$.  
\end{definition}

We conclude this section by summarizing the previous discussion in a statement.
\begin{proposition}
\begin{enumerate}
\item $D_G F$ in Def. \ref{def:metricderiv} is a covariant derivative.
\item 
Let $F_1$, $F_2$, $G$ be vector fields in $H\pdensities$ such that the ordinary products $GF_1$ and $GF_2$ are vector fields in $H\pdensities$. As
\begin{equation*}
  D_G \expectat p {F_1(p) F_2(p)} = \expectat p {D_G F_1(p) F_2(p)} + \expectat p {F_1(p) D_G        F_2(p)},
\end{equation*}
$D_G$ is a metric derivative.
\end{enumerate}
\end{proposition}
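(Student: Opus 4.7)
The plan is to derive both claims directly from Definition~\ref{def:metricderiv} and the explicit formula~\eqref{eq:metricderiv}, leaning on the isometric property of $\transport{p}{q}$ established in Proposition~\ref{prop:Tisometry}.

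For item (1), I would first confirm that $D_G F(p)$ depends only on the value $w = G(p)$ and not on the curve $p(t)$ used to realize it. Inspecting the closed form $D_G F(p) = \dot F(0) + \tfrac12 F(0) w - \expectat p {\dot F(0) + \tfrac12 F(0) w}$, the terms containing $\dot F(0) = \left.\derivby t F(p(t))\right|_{t=0}$ depend on the curve only through its tangent $w$, because $F$ is a smooth section of $H\pdensities$, while $\tfrac12 F(0) w$ is manifestly pointwise. From this representation the $\reals$-linearity and $C^\infty(\pdensities)$-tensoriality in $G$ follow at once, and $\reals$-linearity in $F$ follows from the linearity of $\transport{p(t)}{p}$ and of differentiation. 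The Leibniz rule $D_G(hF) = (G\cdot h)\,F + h\,D_G F$ for $h \in C^\infty(\pdensities)$ is obtained by setting $F(t) = h(p(t)) F(p(t))$ in Def.~\ref{def:metricderiv}, commuting the scalar $h(p(t))$ past the linear transport, and applying the ordinary product rule to the scalar factor.

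For item (2), the metric compatibility is the direct shadow of the isometric property. Given a curve $p(t)$ with $p(0)=p$ and $\delta p(0) = G(p)$, Proposition~\ref{prop:Tisometry} yields
\begin{equation*}
  \expectat{p(t)}{F_1(p(t))\,F_2(p(t))} = \expectat p {\transport{p(t)}{p} F_1(p(t)) \cdot \transport{p(t)}{p} F_2(p(t))}.
\end{equation*}
The right-hand side is now the $L^2(p)$-inner product of two curves living in the \emph{fixed} Hilbert space $H_p$. Differentiating at $t=0$ by the ordinary Leibniz rule in $L^2(p)$, together with $\transport{p(0)}{p} F_i(p(0)) = F_i(p)$ and Def.~\ref{def:metricderiv}, delivers the asserted identity. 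The integrability of the cross terms $F_1\,D_G F_2$ and $F_2\,D_G F_1$ is guaranteed by the hypothesis that $GF_1, GF_2 \in H\pdensities$: this ensures the terms of the form $F_i(0)\,w$ appearing in~\eqref{eq:metricderiv} lie in $L^2(p)$, so products of any $F_j \in H_p$ with $D_G F_i$ are integrable by Cauchy--Schwarz.

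The main technical point, and hence the obstacle, is justifying $L^2(p)$-differentiability of $t \mapsto \transport{p(t)}{p} F_i(p(t))$ and $L^1(p)$-differentiability of the product of the two transported curves, so that the formal product rule may be applied under the expectation. Both reductions follow from the smoothness of the explicit transport formula~\eqref{eq:myUonT} jointly in its arguments, from the assumed smoothness of $F_i$ as sections of $H\pdensities$, and from the integrability hypothesis on $GF_i$; routine dominated-convergence estimates then legitimize termwise differentiation. Once these regularity issues are dispatched, both conclusions drop out simultaneously: $D_G$ is a well-defined covariant derivative on $H\pdensities$ and it is compatible with the fibrewise metric $g_p(\cdot,\cdot) = \expectat p {\,\cdot\,}$, hence metric.
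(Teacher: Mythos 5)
Your proposal is correct and follows essentially the same route as the paper, which offers no separate proof but presents the proposition as a summary of the preceding computation: the explicit formula \eqref{eq:metricderiv} for $\left.\derivby t \transport{p(t)}p F(t)\right|_{t=0}$ together with the isometry of $\transport pq$ from Prop.~\ref{prop:Tisometry}, mirroring the argument already spelled out for the sphere in Sec.~\ref{sec:covariant-derivative}. Your explicit verification of the covariant-derivative axioms in item (1) and your remark on why the hypothesis $GF_i \in H\pdensities$ makes the cross terms integrable merely fill in details the paper leaves implicit.
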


\section{Deformed exponential manifold}
\label{cha:deform-expon-manif}
The deformed exponential function is defined in \cite[Ch. 10]{naudts:2011GTh} as the inverse function of a deformed logarithm, with the aim to define a generalisation of entropy and exponential families. To improve consistency with the literature the $\phi$-notation in this section differs from what was used in previous sections. 

Assume the function $\phi \colon \reals_> \to ]0,\phi(\infty)[$ is surjective, increasing and continuous.  The $\phi$-\emph{logarithm} is the function
\begin{equation}\label{eq:philn}
  \philnof v = \int_1^v \frac{dx}{\phi(x)}, \quad v \in \reals_>.
\end{equation}
The $\phi$-logarithm, also called \emph{deformed logarithm}, $\Philn$ is defined on $\reals_>$ and it is strictly increasing, concave and differentiable. Its values range between $-\int_{0}^1 \frac{dx}{\phi(x)}$ and $\int_1^{+\infty} \frac{dx}{\phi(x)}$. If $\int_1^{+\infty} \frac{dx}{\phi(x)} = +\infty$, the range is of $\Philn$ is $]-m,+\infty[$ with $m = \int_{0}^1 \frac{dx}{\phi(x)} > 0$.
We assume the $\phi$ function is affinely bounded, so that
\begin{equation*}
\lim_{u\to\infty} \philnof u \ge \int_1^{+\infty} \frac{dx}{Ax+B} = +\infty.  
\end{equation*}

The $\phi$-\emph{exponential} or \emph{deformed exponential} is the inverse function of $\Philn$,
\begin{equation*}
  \Phiexp = \Philn^{-1} \colon ]-m,+\infty[ \to \reals_>.
\end{equation*}
It is positive, increasing, convex, differentiable.

\begin{example}[Tsallis logarithm and exponential \cite{tsallis:1988}]
The Tsallis logarithm with parameter $q \in ]0,1]$ is a deformed logarithm with $\phi(v)=1/v^q$. We have the explicit form
\begin{equation*}
  \qlnof v = \int_1^{v} \frac{dx}{x^q} =
  \begin{cases}
    \frac1{1-q}\left(v^{1-q}-1\right), & q\in ]0, 1[, \\ \lnof v, & q=1.
  \end{cases}
\end{equation*}

The corresponding exponential is defined for $q\ne 1$ by
\begin{equation*}
  \qexpof u = \left(1+(1-q)u\right)^{\frac1{1-q}}, \quad u > -\frac1{1-q} = m. 
\end{equation*}
\end{example}

\begin{example}[Kaniadakis exponential and logarithm \cite{kaniadakis:2002PhRE,kaniadakis:2005PhRE}]
The Kaniadakis exponential with parameter $\kappa \in [0,1[$ is based on the function
\begin{equation*}
    \phi(x) = \frac{2x}{x^\kappa+x^{-\kappa}} = \frac{2x^{\kappa+1}}{x^{2\kappa}+1},\quad x > 0.
\end{equation*}
This function is linearly bounded, $\phi(x) \le x$; it is equivalent to $2x$ for $x \downarrow 0$ and to $2x^{1-\kappa}$ for $x \uparrow +\infty$. 

The deformed logarithm is
\begin{equation*}
\klnof v = \int_1^v \frac{x^\kappa+x^{-\kappa}}{2}\frac1x dx =
\begin{cases}
\frac1{2\kappa}\left(v^\kappa-v^{-\kappa}\right) & \text{if $\kappa \neq 0$}, \\
\ln v & \text{if $\kappa = 0$}.
\end{cases}
\end{equation*}

By checking the differential equation $y'=\phi(y)$ one shows that the deformed exponential is 
\begin{equation*}
  \kexpof u = \expof{\int_0^u \frac{dy}{\sqrt{1+\kappa^2 y^2}}} = \begin{cases}
\left(\kappa u + \sqrt{1+\kappa^2 u^2}\right)^{\frac1\kappa} & \text{if $\kappa \neq 0$}, \\
\exp u & \text{if $\kappa = 0$}.
\end{cases}
\end{equation*}
\end{example}
\begin{example}[Nigel J. Newton exponential \cite{newton:2012}]
The function
\begin{equation*}
  \phi(x) = \frac{x}{x+1}, \quad x > 0,
\end{equation*}
has image $\rangeof \phi = ]0,1[$, is bounded by 1 and is linearly bounded by $x$. The $\phi$-logarithm is
\begin{equation*}
  \philnof u = \int_1^{u} \frac{x+1}{x} dx = u -1 + \ln u. 
\end{equation*}
\end{example}
\subsection{Model space}
Here we built our model spaces according to the proposal of Vigelis and Cavalcante \cite{vigelis|cavalcante:2011}.  if $\phi(x)=x$. Note that the $\phi$-exponential notation is not used in \cite{vigelis|cavalcante:2011}, where $\phi$ denotes a class of deformed exponential function larger than the one used here.
\begin{definition}
For each $p \in \pdensities$, we define the vector space
\begin{equation*}
  L^{\phi,p}(\mu) = \setof{u}{\exists \alpha > 0  \int \phiexpof{\alpha \absoluteval u + \philnof p} d\mu < + \infty}
\end{equation*}
\end{definition}
The vector space property is a consequence of the convexity of the $\phi$-exponential. Under our assumptions on $\mu$ (locally finite) and on $\phi$ (affinely bounded) such vector spaces are not empty. Bounded random variables whose support has finite $\mu$ measure belong to each $L^{\phi,p}(\mu)$. 
\begin{proposition}
The following statements are equivalent to $u \in L^{\phi,p}(\mu)$.
 \begin{enumerate}
\item For all real $\theta$ in a neighborhood of 0
\begin{equation*}
    \int \phiexpof{\theta u + \philnof p} d\mu < + \infty.
  \end{equation*}
 \item For some positive $\alpha$ 
\begin{equation*}
    \frac12 \left(\int \phiexpof{\alpha u + \philnof p} d\mu + \int \phiexpof{\alpha u + \philnof p} d\mu \right) < + \infty.
  \end{equation*}
\end{enumerate}
\end{proposition}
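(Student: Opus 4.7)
The plan is to traverse the cycle (Definition) $\Rightarrow$ (1) $\Rightarrow$ (2) $\Rightarrow$ (Definition), using only the fact, already established at the start of the section, that the deformed exponential $\Phiexp$ is positive, strictly increasing, convex, and differentiable on its domain.

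For (Definition) $\Rightarrow$ (1), suppose $\alpha > 0$ satisfies $\int \phiexpof{\alpha\absoluteval u + \philnof p}\,d\mu < +\infty$. Whenever $\absoluteval \theta \le \alpha$ one has $\theta u \le \alpha\absoluteval u$ pointwise, so by monotonicity of $\Phiexp$,
\begin{equation*}
\phiexpof{\theta u + \philnof p} \le \phiexpof{\alpha\absoluteval u + \philnof p} \quad \mu\text{-a.e.,}
\end{equation*}
and integration yields finiteness of $\int \phiexpof{\theta u + \philnof p}\,d\mu$ on the whole neighborhood $(-\alpha,\alpha)$ of $0$. The step (1) $\Rightarrow$ (2) is then immediate: choose $\alpha > 0$ small enough that both $+\alpha$ and $-\alpha$ lie in the neighborhood supplied by (1), which makes each of the two integrals in (2) finite, and hence also their half-sum.

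For (2) $\Rightarrow$ (Definition), the key observation is the pointwise identity $\alpha\absoluteval u = \max(\alpha u, -\alpha u)$. Since $\Phiexp$ takes values in $\reals_>$ and is increasing,
\begin{equation*}
\phiexpof{\alpha\absoluteval u + \philnof p} = \max\bigl(\phiexpof{\alpha u + \philnof p},\,\phiexpof{-\alpha u + \philnof p}\bigr) \le \phiexpof{\alpha u + \philnof p} + \phiexpof{-\alpha u + \philnof p}
\end{equation*}
holds $\mu$-a.e., because replacing a maximum of two nonnegative numbers by their sum can only enlarge it. Integrating both sides and using (2) gives $\int \phiexpof{\alpha\absoluteval u + \philnof p}\,d\mu < +\infty$, i.e.\ the condition defining $L^{\phi,p}(\mu)$.

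The argument is essentially mechanical, hinging only on the positivity and monotonicity of $\Phiexp$; there is no genuine technical obstacle. The one subtlety worth stressing is that the transit between signed $u$ (appearing in (1) and (2)) and $\absoluteval u$ (appearing in the definition) is handled cleanly only because $\Phiexp$ is pointwise positive, which is what legitimizes bounding a maximum by a sum in the display above.
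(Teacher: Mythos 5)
The paper states this proposition without any proof, so there is nothing to compare against; your argument is correct and is clearly the intended one, resting only on the monotonicity and positivity of $\Phiexp$ (and on reading the second integral in item (2) as containing $-\alpha u$, which corrects the paper's evident typo). The only point worth making explicit is that when $m=\int_0^1 dx/\phi(x)$ is finite the argument $\theta u + \philnof p$ can fall below the left endpoint of the domain $]-m,+\infty[$ of $\Phiexp$, so one should adopt the usual convention $\phiexpof{x}=0$ for $x\le -m$, under which $\Phiexp$ remains nonnegative and nondecreasing and all of your pointwise inequalities, including the bound of the maximum by the sum, stay valid.
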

For each $u \in L^{\phi,p}(\mu)$ the set
\begin{equation*}
  \setof{r > 0}{\int \phiexpof{r^{-1}\absoluteval u + \philnof p} d\mu \le 2}
\end{equation*}
is an infinite interval of the positive real line. Its left end is the norm of $u$.
\begin{proposition}
  The vector space $ L^{\phi,p}(\mu) $ is a Banach space for the norm
  \begin{equation*}
    \Vert u \Vert_{\phi,p} = \inf \setof{r > 0}{\int \phiexpof{r^{-1}\absoluteval u + \philnof p} d\mu \le 2}.
  \end{equation*}
\end{proposition}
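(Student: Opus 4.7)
My plan is to run the classical Luxemburg-norm argument, adapted to accommodate the additive shift $\philnof p$ inside $\Phiexp$ that distinguishes this setting from the textbook Orlicz-space one. Introduce the modular $\rho(u,r)=\int\phiexpof{r^{-1}\absoluteval u+\philnof p}\, d\mu$ and record three elementary properties. First, $\rho(\cdot,r)$ is convex because $\Phiexp$ is convex and increasing while $\absoluteval u$ is convex. Second, $\rho(u,\cdot)$ is decreasing on $(0,\infty)$ because $r\mapsto r^{-1}\absoluteval u$ is decreasing pointwise. Third, $\rho(u,r)\to\int p\, d\mu=1$ as $r\to\infty$, by dominated convergence with envelope $\phiexpof{\alpha_0\absoluteval u+\philnof p}$ for any $\alpha_0$ witnessing $u\in L^{\phi,p}(\mu)$. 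These three facts imply that the defining set in $\Vert u\Vert_{\phi,p}$ is nonempty precisely for $u\in L^{\phi,p}(\mu)$: a convex interpolation between $r=\alpha_0^{-1}$ and $r=\infty$ yields $\rho(u,r)\le 2$ for $r$ large.

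Next, I would verify the norm axioms. Absolute homogeneity is the substitution $s=r/\absoluteval\lambda$. Definiteness follows from monotone convergence: if $\absoluteval u>0$ on a set of positive measure, then $\phiexpof{r^{-1}\absoluteval u+\philnof p}\nearrow +\infty$ there as $r\downarrow 0$, so $\rho(u,r)\to +\infty$, ruling out $\Vert u\Vert_{\phi,p}=0$. For subadditivity, pick $r_u>\Vert u\Vert_{\phi,p}$ and $r_v>\Vert v\Vert_{\phi,p}$, set $t=r_u/(r_u+r_v)$, and exploit the pointwise bound
\begin{equation*}
\frac{\absoluteval{u+v}}{r_u+r_v}+\philnof p \le t\left(\frac{\absoluteval u}{r_u}+\philnof p\right)+(1-t)\left(\frac{\absoluteval v}{r_v}+\philnof p\right);
\end{equation*}
applying the increasing convex $\Phiexp$ and integrating gives $\rho(u+v,r_u+r_v)\le 2t+2(1-t)=2$, hence $\Vert u+v\Vert_{\phi,p}\le r_u+r_v$, and infima conclude.

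For completeness, let $(u_n)$ be Cauchy and set $r_{nm}=\Vert u_n-u_m\Vert_{\phi,p}\to 0$. By $\sigma$-finiteness of $\mu$ and $p>0$ a.e., $\Omega$ is a countable union of finite-measure sets $E$ on which $\philnof p$ is bounded below by some constant $c$. On such an $E$, for any fixed $\varepsilon>0$,
\begin{equation*}
2\ge\rho(u_n-u_m,r_{nm})\ge\phiexpof{\varepsilon/r_{nm}+c}\cdot\mu\left(E\cap\setof{x}{\absoluteval{u_n(x)-u_m(x)}>\varepsilon}\right),
\end{equation*}
and since $\phiexpof{\varepsilon/r_{nm}+c}\to +\infty$ as $r_{nm}\to 0$, this forces convergence in $\mu$-measure on $E$, hence on all of $\Omega$ in the $\sigma$-finite sense. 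Extract an a.e.\ convergent subsequence $u_{n_k}\to u$; for any $\varepsilon>0$ pick $k$ large enough that $r_{n_k n_j}<\varepsilon$ for all $j\ge k$, and apply Fatou's lemma to $\phiexpof{\varepsilon^{-1}\absoluteval{u_{n_k}-u_{n_j}}+\philnof p}$ as $j\to\infty$ to obtain $\rho(u_{n_k}-u,\varepsilon)\le 2$, i.e.\ $\Vert u_{n_k}-u\Vert_{\phi,p}\le\varepsilon$. Thus $u_n\to u$ in norm, and decomposing $u=u_{n_k}+(u-u_{n_k})$ shows $u\in L^{\phi,p}(\mu)$.

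The main obstacle is precisely the shift $\philnof p$ appearing inside the argument of $\Phiexp$: unlike the textbook Luxemburg setting, the modular does not vanish at zero but satisfies $\rho(0,r)=\int p\, d\mu=1$, which is the reason the threshold in the infimum is $2$ instead of $1$. The subadditivity argument succeeds only because the shift is affine in $u$, so $\philnof p=t\philnof p+(1-t)\philnof p$ is preserved automatically under any convex decomposition, leaving exactly the slack $2-1=1$ needed to absorb the sum. Tracking this invariance carefully at each estimate is the delicate bookkeeping of the proof; once accepted, the remaining steps are routine adaptations of the Musielak--Orlicz framework.
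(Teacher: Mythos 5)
The paper states this proposition without proof, deferring the construction to Vigelis and Cavalcante, so there is no internal argument to compare against; your proof is correct and is the natural one, namely the classical Luxemburg-norm argument for a Musielak--Orlicz-type modular. You correctly isolate the only non-standard feature, the additive shift $\philnof p$, whose invariance under convex combinations ($\philnof p = t\philnof p + (1-t)\philnof p$) is exactly what makes the triangle inequality go through and explains the threshold $2$ rather than $1$. Two small points worth making explicit: the blow-up $\Phiexp(t)\to+\infty$ as $t\to+\infty$, which you use both for definiteness and for the Cauchy-in-measure step, is guaranteed by the paper's standing assumption that $\phi$ is affinely bounded; and $r\mapsto\rho(u,r)$ is left-continuous by monotone convergence, so the modular evaluated at $r=\Vert u_n-u_m\Vert_{\phi,p}$ itself is still $\le 2$, which justifies the first inequality in your completeness estimate without having to pass to $r$ slightly larger than the norm.
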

The importance of escort measures in deformed exponential families have been pointed out in \cite[\S 10.5]{naudts:2011GTh}. 

\begin{definition}\ 
\begin{enumerate}
\item The measure $\phi(p)\cdot\mu$ is equivalent to $\mu$ and it is
  called \emph{escort measure} of $p$.
\item If $\mu$ is a finite measure, or if $\phi$ is linearly bounded, then the escort measure is finite. In such a case, its formalized density is called the \emph{escort density} of $p$. We write 
\begin{equation*}
\expectat{\phi,p}{u} = \frac{\int u \phi(p) d\mu}{\int \phi(p) d\mu}.
\end{equation*}
\end{enumerate}
\end{definition}
\begin{proposition}\ 
\begin{enumerate}
\item The Banach space $L^{\phi,p}(\mu)$ is contained in the Lebesgue space $L^1(\phi(p)\cdot\mu)$ and the injection is non-expansive,
  \begin{equation*}
    \int \absoluteval u \phi(p) d\mu \le \Vert u \Vert_{\phi,p}.
  \end{equation*}
\item
$L^{\phi,p}(\mu)$ is a dense subspace of $L^{\phi,p}(\mu)$.
\item The space
\begin{equation*}
  T_{\phi,p} = \setof{u \in L^{\phi,p}(\mu)}{ \int u \phi(p) d\mu = 0}
\end{equation*}
is a closed subspace of $L^{\phi,p}(\mu) $ hence a Banach space for
the induced norm. 
\end{enumerate}
\end{proposition}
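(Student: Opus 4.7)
The plan is to handle the three parts in order, with part (i) doing all the real work via a tangent-line inequality for $\Phiexp$, and parts (ii) and (iii) following quickly.

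For (i), the key observation is that $\Phiexp$ satisfies the defining ODE $y' = \phi(y)$, so its derivative at $\philnof p$ equals $\phi\left(\phiexpof{\philnof p}\right) = \phi(p)$. Convexity of $\Phiexp$ then gives the tangent-line inequality at $\philnof{p(\omega)}$:
\begin{equation*}
\phiexpof{t + \philnof p} \ge p + \phi(p)\, t, \quad t \ge 0.
\end{equation*}
Pick any $r > \Vert u \Vert_{\phi,p}$, substitute $t = r^{-1}\absoluteval u$, integrate against $\mu$, use $\int p\, d\mu = 1$, and invoke $\int \phiexpof{r^{-1}\absoluteval u + \philnof p}\, d\mu \le 2$ from the definition of the Luxemburg-type norm. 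This yields
\begin{equation*}
2 \ge 1 + r^{-1} \int \absoluteval u\, \phi(p)\, d\mu,
\end{equation*}
so $\int \absoluteval u\, \phi(p)\, d\mu \le r$. Letting $r \downarrow \Vert u \Vert_{\phi,p}$ delivers both the inclusion $L^{\phi,p}(\mu) \subset L^1(\phi(p)\cdot\mu)$ and the non-expansive bound.

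Statement (ii) as printed appears to contain a typographical slip (both sides read $L^{\phi,p}(\mu)$); the natural substantive reading is that the bounded random variables supported on sets of finite $\mu$-measure form a dense subspace of $L^{\phi,p}(\mu)$ (and hence, via (i), $L^{\phi,p}(\mu)$ is dense in $L^1(\phi(p)\cdot\mu)$). The plan there is the standard truncation argument: approximate $u$ by $u_n = (u \wedge n) \vee (-n) \cdot \chi_{A_n}$ for an exhausting sequence $A_n$ of finite $\mu$-measure on which $\philnof p$ stays bounded, then verify norm convergence by dominated convergence applied to $\phiexpof{\alpha\absoluteval{u - u_n} + \philnof p}$, using convexity of $\Phiexp$ to dominate the integrand by the fixed integrable function $\phiexpof{2\alpha\absoluteval u + \philnof p}$ (after increasing $\alpha$ slightly beyond an admissible value for $u$).

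For (iii), part (i) shows that the linear functional $\ell \colon u \mapsto \int u\, \phi(p)\, d\mu$ satisfies
\begin{equation*}
\absoluteval{\ell(u)} \le \int \absoluteval u\, \phi(p)\, d\mu \le \Vert u \Vert_{\phi,p},
\end{equation*}
hence $\ell$ is continuous on $L^{\phi,p}(\mu)$. Its kernel $T_{\phi,p}$ is therefore a closed linear subspace of a Banach space, and so a Banach space in the induced norm.

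The main obstacle is the tangent-line step in (i). Since $\Phiexp$ is defined on $\,]-m,+\infty[$ and $\philnof p > -m$ pointwise with $r^{-1}\absoluteval u \ge 0$, the argument of $\Phiexp$ stays in the domain; what actually needs checking is the pointwise differentiability of $\Phiexp$ at $\philnof{p(\omega)}$ (which follows from $\phi$ being continuous and strictly positive, making $\Philn$ a $C^1$-diffeomorphism) and the finiteness of the integrated form — precisely the condition built into the definition of $\Vert u \Vert_{\phi,p}$.
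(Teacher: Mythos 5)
Your part (i) is exactly the paper's proof: the tangent-line inequality $\phiexpof{t+\philnof p}\ge p+\phi(p)\,t$ coming from convexity of $\Phiexp$ and $\Phiexp'(\philnof p)=\phi(p)$, integrated against $\mu$ and combined with $\int\phiexpof{r^{-1}\absoluteval u+\philnof p}\,d\mu\le 2$ for $r>\Vert u\Vert_{\phi,p}$. Part (iii) via continuity of $u\mapsto\int u\,\phi(p)\,d\mu$ and closedness of its kernel is the intended (and in the paper only implicit) consequence of (i).

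The one point to flag is your reinterpretation of (ii). You are right that the printed statement is a tautological typo, but the truncation-plus-dominated-convergence argument you sketch does not prove that bounded functions are dense in $L^{\phi,p}(\mu)$ in general. With $u_n$ a truncation of $u$ you have $\absoluteval{u-u_n}\le\absoluteval u$, so dominated convergence gives $\int\phiexpof{\alpha\absoluteval{u-u_n}+\philnof p}\,d\mu\to 1$ only for those $\alpha$ that are already admissible for $u$; this yields $\limsup_n\Vert u-u_n\Vert_{\phi,p}\le 1/\alpha^*$ where $\alpha^*$ is the supremum of admissible $\alpha$, and $\alpha^*$ can be finite. Since $\phi$ is only assumed affinely bounded, $\Phiexp$ may still grow exponentially (take $\phi(x)=x$, giving the classical exponential case), the $\Delta_2$-condition fails, and bounded functions are then \emph{not} dense --- their closure is the Orlicz heart. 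So either the intended statement of (ii) concerns density of $L^{\phi,p}(\mu)$ in $L^1(\phi(p)\cdot\mu)$ (which does follow from (i) plus density of simple functions in $L^1$), or it requires restricting to deformed exponentials of polynomial growth; your argument as written would fail for the exponential endpoint of the family. The paper offers no proof of (ii), so this is a gap relative to your own chosen reading rather than a divergence from the paper.
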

\begin{proof}
  From the convexity of $\Phiexp$
  \begin{equation*}
    \phiexpof{r^{-1} \absoluteval v + \philnof p} \ge p + \phi(p) r^{-1} \absoluteval v,
  \end{equation*}
hence,  if $r > \Vert v \Vert_{\phi,p}$,
\begin{equation*}
  2 \ge \int p d\mu + r^{-1} \int \absoluteval v \phi(p) d\mu.
\end{equation*}
It follows that $v \in L^1(\phi(p) \cdot\mu)$ and $\int u \phi(p) d\mu$ is well defined. Moreover,
\begin{equation*}
  \absoluteval{\int \absoluteval u \phi(p) d\mu} \le r
\end{equation*}
for all $r > \Vert v \Vert_{\phi,p}$.
\end{proof}
\begin{proposition}
  \begin{equation*}
    L^{\phi,q} \subset L^{\phi,p} \quad \Longleftrightarrow \quad \left(\philnof q - \philnof p\right) \in L^{\phi,p}(\mu)
  \end{equation*}
\end{proposition}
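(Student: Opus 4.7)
The plan is to prove the two implications separately. For $(\Leftarrow)$ I will use convexity of $\Phiexp$ to tilt the $q$-based integrability of an element $u$ into $p$-based integrability, and for $(\Rightarrow)$ I will split $\philnof q-\philnof p$ into its positive and negative parts, each of which sits in an appropriate space essentially for free.

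For $(\Leftarrow)$, write $v := \philnof q - \philnof p$ and pick $\gamma>0$ with $\int \phiexpof{\gamma|v|+\philnof p}\,d\mu<\infty$. Given $u\in L^{\phi,q}(\mu)$ and $\beta>0$ with $\int\phiexpof{\beta|u|+\philnof q}\,d\mu<\infty$, I would observe the algebraic identity
\[
\lambda\beta|u|+\philnof p \;=\; \lambda\bigl(\beta|u|+\philnof q\bigr)+(1-\lambda)\Bigl(\philnof p-\tfrac{\lambda}{1-\lambda}v\Bigr),
\]
valid for every $\lambda\in(0,1)$. Choosing $\lambda$ small enough that $\lambda/(1-\lambda)\le\gamma$, convexity of $\Phiexp$ followed by monotonicity (using $-v\le|v|$) gives
\[
\phiexpof{\lambda\beta|u|+\philnof p}\le \lambda\,\phiexpof{\beta|u|+\philnof q}+(1-\lambda)\,\phiexpof{\gamma|v|+\philnof p},
\]
and integrating shows $u\in L^{\phi,p}(\mu)$ with parameter $\alpha=\lambda\beta$.

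For $(\Rightarrow)$, decompose $v=v^{+}-v^{-}$ with $v^{+}=(\philnof q-\philnof p)^{+}$ and $v^{-}=(\philnof p-\philnof q)^{+}$. The pointwise identity $|v^{+}|+\philnof p=\max(\philnof p,\philnof q)$ together with monotonicity of $\Phiexp$ yields
\[
\int\phiexpof{|v^{+}|+\philnof p}\,d\mu\le\int p\,d\mu+\int q\,d\mu=2,
\]
so $v^{+}\in L^{\phi,p}(\mu)$; the symmetric estimate $|v^{-}|+\philnof q=\max(\philnof p,\philnof q)$ shows $v^{-}\in L^{\phi,q}(\mu)$. Invoking the hypothesis $L^{\phi,q}(\mu)\subset L^{\phi,p}(\mu)$ on $v^{-}$ and using that $L^{\phi,p}(\mu)$ is a vector space, one concludes $v=v^{+}-v^{-}\in L^{\phi,p}(\mu)$.

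The main obstacle is discovering the convex combination in $(\Leftarrow)$: one must exhibit $\alpha|u|+\philnof p$ as $\lambda A+(1-\lambda)B$ with $A=\beta|u|+\philnof q$ (controlled by the $L^{\phi,q}$-datum on $u$) and $B=\philnof p-\tfrac{\lambda}{1-\lambda}v$ (controlled, after monotonicity, by the $L^{\phi,p}$-datum on $v$). This forces the relation $\alpha=\lambda\beta$ together with $\lambda/(1-\lambda)\le\gamma$, and it is the only place where one genuinely uses the structural properties of $\Phiexp$; the remaining steps reduce to one-line applications of convexity, monotonicity, and vector-space closure.
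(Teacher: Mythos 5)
Your proof is essentially correct, and it supplies an argument the paper does not give: this proposition is stated without proof (it belongs to the Vigelis--Cavalcante framework the section imports), so there is no in-paper argument to compare against. Both halves of your proof are the right ones. For $(\Rightarrow)$, the pointwise identities $v^{+}+\philnof p=\max(\philnof p,\philnof q)=v^{-}+\philnof q$ and the bound $\phiexpof{\max(\philnof p,\philnof q)}\le p+q$ do give $v^{+}\in L^{\phi,p}(\mu)$ and $v^{-}\in L^{\phi,q}(\mu)$ with $\alpha=1$, and the hypothesis together with the vector-space property (which the paper asserts as a consequence of convexity of $\Phiexp$) finishes the argument. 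For $(\Leftarrow)$, your convex-combination identity is the key device and the bookkeeping $\alpha=\lambda\beta$ with $\lambda/(1-\lambda)\le\gamma$ checks out. The one point to tidy is the order of your two inequalities: as written you first apply convexity of $\Phiexp$ at the point $B=\philnof p-\frac{\lambda}{1-\lambda}v$, which on the set where $\philnof q$ greatly exceeds $\philnof p$ may drop below $-m$, outside the domain $\left]-m,+\infty\right[$ on which the paper defines $\Phiexp$. Either adopt the standard convention that $\Phiexp$ is extended by $0$ on $\left]-\infty,-m\right]$, which keeps it convex and nondecreasing so the inequality survives, or simply reverse the two steps: use monotonicity first to replace $-\lambda v$ by $\lambda\absoluteval v$ inside the argument, and then apply convexity at $\philnof p+\frac{\lambda}{1-\lambda}\absoluteval v\ge\philnof p>-m$. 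Either repair is one line and leaves the rest of your argument untouched.
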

Given $p, q \in \pdensities$, convexity implies for $t \in [0,1]$
\begin{align*}
\int\phiexpof{t(\philnof q - \philnof p) + \philnof p} d\mu &= \\
  \int\phiexpof{(1-t)\philnof p + t\philnof q}d\mu &\le 1 < +\infty.
\end{align*}
The inequality shows that any two distinct positive probability densities are connected by a closed arc of of densities with total mass strictly smaller that one. The arc extends to negative values $t \in ]a,1] \supset [0,1]$ with densities of finite mass if, and only if, $\philnof q - \philnof p \in L^{\phi,p}(\mu)$. This leads to the following definition.
\begin{definition}\label{def:phiconnection}
 The densities $p, q \in \pdensities$ are \emph{$\phi$-connected by an open arc} if $]a,b[ \supset [0,1]$ and
  \begin{equation}\label{eq:oconnection}
      \int\phiexpof{(1-t)\philnof p + t\philnof q}d\mu < +\infty, \quad t \in ]a,b[.
  \end{equation}
\end{definition}
\begin{proposition}
The relation in Definition \ref{def:phiconnection} is an equivalence relation.
\end{proposition}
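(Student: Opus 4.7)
The plan is to reduce the equivalence-relation claim to set equality of model spaces: I want to show $p,q\in\pdensities$ are $\phi$-connected by an open arc if and only if $L^{\phi,p}(\mu)=L^{\phi,q}(\mu)$ as vector spaces. Once this characterisation is in hand, reflexivity, symmetry, and transitivity of $\sim$ are inherited from the corresponding properties of set equality. Reflexivity is trivial, since with $q=p$ the integrand in \eqref{eq:oconnection} is just $p$ and integrates to $1$ for every $t$; symmetry is the reparametrisation $s=1-t$, which turns an open arc $]a,b[\supset[0,1]$ into $]1-b,1-a[\supset[0,1]$.

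The substantive step is the characterisation. Set $u:=\philnof q-\philnof p$. Convexity of $\Phiexp$ makes $I(t):=\int\phiexpof{\philnof p+tu}\,d\mu$ a convex function of $t$, so $\{t:I(t)<\infty\}$ is an interval. If $p\sim q$, this interval contains an open neighbourhood of $0$ and also an open neighbourhood of $1$; from the former I extract $\alpha>0$ with $\int\phiexpof{\pm\alpha u+\philnof p}\,d\mu<\infty$, and then the elementary pointwise bound
\[
\phiexpof{\alpha|u|+\philnof p}\le\phiexpof{\alpha u+\philnof p}+\phiexpof{-\alpha u+\philnof p}
\]
gives $u\in L^{\phi,p}(\mu)$. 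Rewriting $(1-t)\philnof p+t\philnof q=\philnof q+(t-1)u$ and shifting $s=t-1$, the neighbourhood of $1$ becomes a neighbourhood of $0$ in $\int\phiexpof{\philnof q+su}\,d\mu$, so the same device delivers $u\in L^{\phi,q}(\mu)$. The Proposition immediately preceding Definition~\ref{def:phiconnection} then yields both inclusions $L^{\phi,q}\subset L^{\phi,p}$ and $L^{\phi,p}\subset L^{\phi,q}$, hence the desired equality. Conversely, if $u$ lies in both spaces, $I$ is finite at some $t<0$ and at some $t>1$, and convexity of $I$ extends finiteness to an open interval strictly containing $[0,1]$.

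With the characterisation established, transitivity is automatic: $L^{\phi,p}=L^{\phi,q}$ and $L^{\phi,q}=L^{\phi,r}$ force $L^{\phi,p}=L^{\phi,r}$. The main obstacle I anticipate is the endpoint bookkeeping in the characterisation itself, namely making sure that openness of the arc at \emph{both} $0$ and $1$ is exploited so as to extract the two membership conditions $u\in L^{\phi,p}$ and $u\in L^{\phi,q}$ from a single random variable $u$; once these are in place, the preceding Proposition converts them into the two set inclusions, and the remainder is routine convexity.
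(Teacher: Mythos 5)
Your proof is correct, but it takes a genuinely different route from the paper's. The paper proves transitivity directly by a two-variable convexity argument: for $p,q$ $\phi$-connected on $]a,b[$ and $q,r$ on $]c,d[$ it considers the convex function $(\alpha,\beta)\mapsto\int\phiexpof{\alpha\philnof p+(1-\alpha-\beta)\philnof q+\beta\philnof r}\,d\mu$, notes that the hypotheses give finiteness on two open segments of the axes $\beta=0$ and $\alpha=0$ (each covering the points $0$ and $1$ of its axis), deduces finiteness on the interior of their convex hull, and intersects with the line $\alpha+\beta=1$ to produce an open arc joining $p$ and $r$. You instead factor the problem through the characterisation ``$p\sim q$ iff $L^{\phi,p}(\mu)=L^{\phi,q}(\mu)$'', obtained from the one-variable convexity of $t\mapsto\int\phiexpof{\philnof p+t(\philnof q-\philnof p)}\,d\mu$ together with the inclusion criterion $L^{\phi,q}\subset L^{\phi,p}\Leftrightarrow \philnof q-\philnof p\in L^{\phi,p}(\mu)$ stated before Definition \ref{def:phiconnection}; your endpoint bookkeeping is sound (openness of the arc at $0$ gives $u\in L^{\phi,p}(\mu)$, openness at $1$ gives $u\in L^{\phi,q}(\mu)$, and conversely finiteness at some $t<0$ and some $t>1$ plus convexity restores an open arc containing $[0,1]$), and the pointwise bound $\phiexpof{\alpha\absoluteval u+\philnof p}\le\phiexpof{\alpha u+\philnof p}+\phiexpof{-\alpha u+\philnof p}$ is legitimate because $\Phiexp$ is increasing. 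As for what each approach buys: the paper's argument is shorter and entirely self-contained, needing only convexity of $\Phiexp$ in the plane; yours proves more, since it delivers as a byproduct the subsequent Proposition \ref{prop:phiconnection} (for which the paper only remarks that it ``follows from the symmetric inclusion''), it reduces transitivity to the triviality of equality of model spaces, and it makes explicit the analogy with item 5 of Proposition \ref{th:22} in the undeformed exponential case.
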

\begin{proof}
We show transitivity for densities $p,q,r \in posdensities$ with $p,q$ $\phi$-connected on $]a,b[$ and $q,r$ on $]c,d[$. The convex function
\begin{equation*}
  (\alpha,\beta) \mapsto \int\phiexpof{\alpha\philnof p + (1-\alpha-\beta)\philnof q + \beta\philnof r} d\mu
\end{equation*}
is finite on the interior of the convex hull of $(1-b,0),(1-a,0),(0,c),(0,d)$, hence $p,r$ are $\phi$-connected on $]-bd/(b+d-1),b(1-c)/(b-c)[$.
\end{proof}
\begin{proposition}\label{prop:phiconnection}
$L^{\phi,q}(\mu) = L^{\phi,p}(\mu)$ if, and only if, $p$ and $q$ are $\phi$-connected by an open arc.
\end{proposition}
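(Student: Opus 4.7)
The plan is to reduce this equivalence to the preceding proposition, which characterises $L^{\phi,q}(\mu)\subset L^{\phi,p}(\mu)$ as $\philnof q-\philnof p\in L^{\phi,p}(\mu)$. Equality of the two spaces is therefore equivalent to the conjunction $\philnof q-\philnof p\in L^{\phi,p}(\mu)$ and $\philnof p-\philnof q\in L^{\phi,q}(\mu)$, and the task reduces to matching this conjunction with the $\phi$-arc condition of Def.~\ref{def:phiconnection}.

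For ``arc $\Rightarrow$ equality'', set $u=\philnof q-\philnof p$ and pick $\epsilon>0$ with $\pm\epsilon\in\,]a,b[$. The arc condition evaluated at $t=\pm\epsilon$ reads
\begin{equation*}
\int\phiexpof{\philnof p+\epsilon u}\,d\mu<+\infty \quad\text{and}\quad \int\phiexpof{\philnof p-\epsilon u}\,d\mu<+\infty.
\end{equation*}
Since $\epsilon|u|=\max(\epsilon u,-\epsilon u)$ and $\Phiexp$ is nonnegative and nondecreasing, the pointwise estimate
\begin{equation*}
\phiexpof{\philnof p+\epsilon|u|}\le\phiexpof{\philnof p+\epsilon u}+\phiexpof{\philnof p-\epsilon u}
\end{equation*}
integrates to $u\in L^{\phi,p}(\mu)$. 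The parallel argument at $t=1\pm\epsilon'$, expanded around $q$, yields $-u\in L^{\phi,q}(\mu)$, and the preceding proposition then forces equality of the two spaces.

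For the converse, assume $L^{\phi,p}(\mu)=L^{\phi,q}(\mu)$; the preceding proposition supplies $\alpha>0$ with $\int\phiexpof{\philnof p+\alpha|u|}\,d\mu<+\infty$ and $\alpha'>0$ with $\int\phiexpof{\philnof q+\alpha'|u|}\,d\mu<+\infty$. Monotonicity of $\Phiexp$ then bounds the arc integral for $t\in\,]-\alpha,0]$ by the former and for $t\in[1,1+\alpha'[$ by the latter (after reparametrising around $q$), while convexity of $\Phiexp$ handles $t\in[0,1]$ through
\begin{equation*}
\int\phiexpof{(1-t)\philnof p+t\philnof q}\,d\mu\le (1-t)\int p\,d\mu+t\int q\,d\mu=1.
\end{equation*}
Splicing the three ranges gives finiteness on the open interval $]-\alpha,1+\alpha'[\,\supset\,[0,1]$, which is the required $\phi$-connection.

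The only care-point, rather than a genuine obstacle, is that the extensions past $t=0$ and $t=1$ must be \emph{strict}, so the resulting interval is open and properly contains $[0,1]$. This is precisely what the Orlicz-type definition of $L^{\phi,p}(\mu)$ delivers through the positive scaling $\alpha$; a weaker hypothesis such as mere integrability against the escort measure $\phi(p)\cdot\mu$ would not suffice.
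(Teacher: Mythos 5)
Your proof is correct and takes essentially the same route as the paper, whose entire argument is the one-line remark that the claim ``follows from the symmetric inclusion'': you reduce equality of the spaces to the two inclusions characterized by the preceding proposition and then supply the elementary monotonicity/convexity estimates matching that conjunction to the open-arc condition of Def.~\ref{def:phiconnection}. The details you fill in (the bound $\phiexpof{\philnof p+\epsilon\absoluteval u}\le\phiexpof{\philnof p+\epsilon u}+\phiexpof{\philnof p-\epsilon u}$ and the three-range splicing of $]-\alpha,1+\alpha'[$) are exactly what the paper leaves implicit.
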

\begin{proof} 
  Follows from the symmetric inclusion.
\end{proof}
If $p_0,p_1$ are $\phi$-connected by an open interval we can define a statistical model by
\begin{equation*}
  p(t) = \frac{\phiexpof{(1-t)\philnof{p_0}+t\philnof{p_1}}}{\int\phiexpof{(1-t)\philnof{p_0}+t\philnof{p_1}}d\mu}.
\end{equation*}
If we define
\begin{equation*}
  u = \philnof{p_1} - \philnof{p_0} - \int \left(\philnof{p_1} - \philnof{p_0}\right) \phi(p) d\mu, 
\end{equation*}
we can consider the expression for the density
\begin{equation*}
  \phiexpof{tu - \psi(t) + \philnof{p_0}} = p(t),
\end{equation*}
that is
\begin{equation*}
  tu - \psi(t) + \philnof{p_0} = \philnof{p(t)},
\end{equation*}
which gives
\begin{equation*}
  \psi(t) = \int \phi(p_0)\left(\philnof{p(t)} - \philnof{p_0}\right) d\mu.
\end{equation*}

\subsection{Generating functionals}
\label{sec:gener-funct}
\begin{definition}
  The convex function
  \begin{equation*}
    L^{\phi,p}(\mu) \ni v \mapsto \int \phiexpof{v + \philnof p} d\mu \in [1,+\infty]
  \end{equation*}
 is the \emph{$\phi$-moment generating functional at $p$}. 
\end{definition}
\begin{proposition}\ 
  \begin{enumerate}
  \item The proper domain of the moment generating functional contains the open ball of $L^{\phi,p}(\mu)$. The interior $\mathcal S_p$ of the proper domain of the moment generating functional in a nonempty convex open set.
  \item For each $u \in \mathcal S_p$ the mapping
    \begin{equation*}
      L^{\phi,p}(\mu) \ni v \mapsto \int \phi(\phiexpof{u+\philnof p}) v d\mu
    \end{equation*}
is linear and continuous.
\item The moment generating functional is lower semicontinuous.
  \end{enumerate}
\end{proposition}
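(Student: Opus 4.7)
\emph{Part} (1): I would unwind the Luxemburg-type definition of $\|\cdot\|_{\phi,p}$. If $\|v\|_{\phi,p} < 1$, pick $r\in(\|v\|_{\phi,p},1)$ with $\int \phiexpof{r^{-1}|v|+\philnof{p}}\,d\mu \le 2$. Because $\Phiexp$ is increasing and $r^{-1}>1$, the pointwise chain $\phiexpof{v+\philnof{p}} \le \phiexpof{|v|+\philnof{p}} \le \phiexpof{r^{-1}|v|+\philnof{p}}$ integrates to $M_p(v) \le 2$, putting $v$ in the proper domain. Convexity of $\Phiexp$ passes to $M_p$, so the proper domain is convex, and since it contains the open unit ball its interior $\mathcal S_p$ is a nonempty convex open set.

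\emph{Part} (3): I would prove lower semicontinuity next, since it is used in (2). Suppose $v_n \to v$ in $L^{\phi,p}(\mu)$; by the non-expansive embedding $L^{\phi,p}(\mu)\hookrightarrow L^1(\phi(p)\cdot\mu)$ of the previous proposition, $v_n\to v$ in $L^1(\phi(p)\cdot\mu)$. Extract first a subsequence realising $\liminf_n M_p(v_n)$ and then a further subsequence converging $\phi(p)\cdot\mu$-a.e., hence $\mu$-a.e.\ (as $\phi(p)>0$). Continuity of $\Phiexp$ transfers this to pointwise convergence of the nonnegative integrands $\phiexpof{v_{n_{k_j}}+\philnof{p}}$, and Fatou's lemma yields $M_p(v)\le \liminf_n M_p(v_n)$.

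\emph{Part} (2): Combining (1), (3), and the classical fact that a proper convex lower semicontinuous function on a Banach space is continuous on the interior of its effective domain, $M_p$ is locally bounded on $\mathcal S_p$. Fix $u\in\mathcal S_p$ and $\delta>0$ with $M := \sup\{M_p(w) : \|w-u\|_{\phi,p}\le\delta\} < +\infty$. The tangent-line inequality for the convex function $\Phiexp$ at $u+\philnof{p}$, together with $\Phiexp' = \phi\circ\Phiexp$, yields the pointwise bound
\begin{equation*}
t\,\phi(\phiexpof{u+\philnof{p}})\,v \;\le\; \phiexpof{u+tv+\philnof{p}} - \phiexpof{u+\philnof{p}}, \qquad t>0,
\end{equation*}
valid for every $v\in L^{\phi,p}(\mu)$. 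Using the monotonicity $|w_1|\le|w_2| \Rightarrow \|w_1\|_{\phi,p}\le\|w_2\|_{\phi,p}$ of the Luxemburg gauge to split $v=v^+-v^-$ with $\|v^\pm\|_{\phi,p}\le\|v\|_{\phi,p}$, the right-hand side at $t=\delta$ is integrable, giving integrability of $\phi(\phiexpof{u+\philnof{p}})\,v$. Integrating and applying the bound to $\pm v/\|v\|_{\phi,p}$ produces
\begin{equation*}
\left|\int \phi(\phiexpof{u+\philnof{p}})\,v\,d\mu\right| \;\le\; \delta^{-1}\bigl(M - M_p(u)\bigr)\,\|v\|_{\phi,p},
\end{equation*}
and linearity in $v$ is immediate from linearity of the integral.

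The main obstacle is the interlock between parts (2) and (3): lower semicontinuity is what upgrades openness of $\mathcal S_p$ into local boundedness of $M_p$, and only then does the tangent-line inequality supply a uniformly integrable dominator in a norm-neighbourhood of $u$, yielding the continuous linear bound. Care must also be taken to ensure that the gauge $\|\cdot\|_{\phi,p}$ is monotone under $|v|$, which follows directly from its definition but must be recorded explicitly when splitting $v$ into positive and negative parts.
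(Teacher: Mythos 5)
The paper states this proposition without proof, so there is no argument of the author's to measure yours against; judged on its own terms your plan is sound and is essentially the canonical one. Part (1) is correct: for $\Vert v\Vert_{\phi,p}<1$ the map $r\mapsto\int\phiexpof{r^{-1}\absoluteval{v}+\philnof p}\,d\mu$ is non-increasing, so any $r$ strictly above the norm is admissible, and monotonicity of $\Phiexp$ gives $M_p(v):=\int\phiexpof{v+\philnof p}\,d\mu\le 2$; convexity of $\Phiexp$ passes to $M_p$ and hence to its domain. Part (3) is the standard Fatou argument, legitimised by the non-expansive embedding into $L^1(\phi(p)\cdot\mu)$ established in the preceding proposition and the equivalence of $\phi(p)\cdot\mu$ with $\mu$. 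Part (2) is also correct: the identity $\Phiexp'=\phi\circ\Phiexp$ turns the tangent-line inequality for the convex function $\Phiexp$ into the pointwise dominator you write, integrability follows from the splitting $v=v^+-v^-$ together with the monotonicity of the gauge, and evaluating at $\pm v/\Vert v\Vert_{\phi,p}$ gives the operator bound $\delta^{-1}\bigl(M-M_p(u)\bigr)$. One remark: the ``interlock'' you describe between (2) and (3) is not actually there. You do not need lower semicontinuity to obtain local boundedness of $M_p$ on $\mathcal S_p$: part (1) already shows $M_p\le 2$ on the open unit ball, and a convex function bounded above on some nonempty open set is automatically locally bounded (indeed locally Lipschitz) on the whole interior of its domain. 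This bypasses the Baire-category argument hidden in the ``classical fact'' you invoke and makes the three parts logically independent, which is preferable since the paper presents them as separate items.
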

\begin{proposition}\ 
\begin{enumerate}
\item For each $u \in B_p$ such that
\begin{equation}\label{eq:domainK}
  \int \phiexpof{u + \philnof p} d\mu < +\infty
\end{equation}
there exists a unique nonnegative constant $K_p(u)$, positive if $u \ne 0$, such that 
\begin{equation*}
 q = \phiexpof{u - K_p(u) + \philnof p}
\end{equation*}
is a probability density. 
\item In particular, \eqref{eq:domainK} holds if $\normat{\phi,p} u < 1$.
\item If $q$ is a positive density such that $\philnof p - \philnof q \in L^{\phi,p}(p)$, then
  \begin{equation}
    \label{eq:pchart}
    u = \philnof q - \philnof p - \int \phi(p)\left(\philnof q - \philnof p\right) d\mu
  \end{equation}
satifies \eqref{eq:domainK} and
\begin{equation}
  \label{eq:KisD}
  K_p(u) = \expectat {\phi(p)}{\philnof p - \philnof q}.
\end{equation}
\end{enumerate}
\end{proposition}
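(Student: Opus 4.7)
I treat the three parts in order, with real work only in (i) and the integrability check of (iii).

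For part (i), fix $u \in B_p$ satisfying \eqref{eq:domainK} and consider
\[
F(c) = \int \phiexpof{u - c + \philnof p}\, d\mu.
\]
Strict monotonicity and convexity of $\Phiexp$ make $F$ convex, continuous on the interior of its proper domain (by dominated convergence, majorised by the value at $c = 0$) and strictly decreasing in $c$. The tangent-line inequality for the convex $\Phiexp$ at $\philnof p$, namely $\phiexpof{u + \philnof p} \ge p + \phi(p)\,u$, integrated together with the centering $\int u\,\phi(p)\,d\mu = 0$ coming from $u \in B_p$, yields $F(0) \ge 1$; strict convexity of $\Phiexp$ promotes this to $F(0) > 1$ whenever $u \neq 0$. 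Monotone convergence forces $F(c) \downarrow 0$ as $c \to +\infty$ (extending $\Phiexp$ by $0$ past its left endpoint if $m$ is finite). The intermediate value theorem then produces a unique $K_p(u) \ge 0$ with $F(K_p(u)) = 1$, strictly positive exactly when $u \neq 0$, and the associated $q$ is by construction a positive density.

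Part (ii) is immediate from the Luxemburg-style definition of the norm: for $\normat{\phi,p}{u} < 1$, pick $r \in (\normat{\phi,p}{u}, 1)$; then $\int \phiexpof{r^{-1}|u| + \philnof p}\, d\mu \le 2$, and since $r^{-1} \ge 1$ and $\Phiexp$ is increasing, $\phiexpof{u + \philnof p} \le \phiexpof{|u| + \philnof p} \le \phiexpof{r^{-1}|u| + \philnof p}$ pointwise, giving \eqref{eq:domainK}.

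Part (iii) is an algebraic identity plus an integrability check. Set $a = \int \phi(p)(\philnof q - \philnof p)\, d\mu$; this is finite via the continuous injection $L^{\phi,p}(\mu) \hookrightarrow L^1(\phi(p)\cdot \mu)$ from the preceding proposition, and nonpositive by the tangent-line inequality for the concave $\Philn$ at $p$, integrated against $\phi(p)\, d\mu$. With $u$ as stated, $u - (-a) + \philnof p = \philnof q$, so $\phiexpof{u - (-a) + \philnof p} = q$ has total mass $1$; the uniqueness from (i) forces $K_p(u) = -a$, which is precisely \eqref{eq:KisD}.

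\textbf{The main obstacle} is verifying that this $u$ in fact satisfies \eqref{eq:domainK}, equivalently $\int \phiexpof{\philnof q - a}\, d\mu < +\infty$ with $-a \ge 0$. My line of attack is Proposition~\ref{prop:phiconnection}: the hypothesis gives $L^{\phi,p}(\mu) = L^{\phi,q}(\mu)$, hence $p$ and $q$ are $\phi$-connected by an open arc, so $\int \phiexpof{(1-t)\philnof p + t\philnof q}\, d\mu < +\infty$ for $t$ in an open interval strictly containing $[0,1]$. Writing $\philnof q + |a|$ as a convex combination of $\philnof q$ and $(1+\epsilon)\philnof q - \epsilon \philnof p$ for small $\epsilon > 0$ and using convexity of $\Phiexp$ should deliver a pointwise integrable majorant. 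The delicate step is absorbing the constant $|a|$ uniformly in the sample point since the two endpoints of the convex combination are sample-dependent, so $\epsilon$ must be chosen in terms of $|a|$ and the growth of $\philnof q - \philnof p$; I expect to use the finiteness of $a$ to make this selection work.
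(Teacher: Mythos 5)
The paper states this proposition without any proof, so there is nothing to compare your argument against; I evaluate it on its own merits. Parts (i) and (ii) are correct and essentially complete: the tangent-line inequality $\phiexpof{u+\philnof p}\ge p+\phi(p)u$ together with the $\phi(p)$-centering of $u$ gives $F(0)\ge 1$, monotone/dominated convergence gives $F(c)\downarrow 0$, and strict monotonicity of $F$ on $\set{F>0}$ gives uniqueness; the Luxemburg-norm argument for (ii) is the standard one. (Two small points: strictness of $F(0)>1$ for $u\ne 0$ needs $\phi$ strictly increasing, which is implicit in all the paper's examples; and your reading of \eqref{eq:KisD} with the unnormalized integral $\int\phi(p)(\philnof p-\philnof q)\,d\mu$ is the only one consistent with the centering in \eqref{eq:pchart}.)

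Part (iii), however, has a genuine gap exactly where you flag it, and the route you sketch will not close it. First, Proposition~\ref{prop:phiconnection} requires the \emph{equality} $L^{\phi,p}(\mu)=L^{\phi,q}(\mu)$ to produce an open arc extending past both endpoints; the hypothesis $\philnof q-\philnof p\in L^{\phi,p}(\mu)$ yields only the one-sided inclusion $L^{\phi,q}\subset L^{\phi,p}$, which extends the arc past $p$ (to $t<0$) but gives no control at $t=1+\epsilon$, which is precisely what your convex combination $(1+\epsilon)\philnof q-\epsilon\philnof p$ requires. Second, even granting the arc, on the region where $\philnof q-\philnof p$ is bounded or negative your majorant degenerates to $\phiexpof{\text{const}+\philnof p}$, whose integrability over an infinite measure space is a separate issue not addressed. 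The clean way to finish is to use the affine bound $\phi(y)\le Ay+B$ directly: since $\derivby s\phiexpof{x+s}=\phi\left(\phiexpof{x+s}\right)\le A\phiexpof{x+s}+B$, a Gronwall estimate gives $\phiexpof{x+c}\le \euler^{Ac}\phiexpof x+\frac BA\left(\euler^{Ac}-1\right)$ for $c\ge 0$, hence
\begin{equation*}
  \int\phiexpof{u+\philnof p}\,d\mu=\int\phiexpof{\philnof q+c}\,d\mu\le \euler^{Ac}+\frac BA\left(\euler^{Ac}-1\right)\mu(\Omega),
\end{equation*}
with $c=\int\phi(p)\left(\philnof p-\philnof q\right)d\mu\ge 0$. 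This is finite whenever $\mu$ is finite or $\phi$ is linearly bounded ($B=0$), i.e.\ exactly under the standing hypotheses that make the escort expectation in \eqref{eq:KisD} well defined; once \eqref{eq:domainK} is secured, your identification $K_p(u)=-a$ via uniqueness in (i) is correct.
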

\begin{definition}
 The $\phi$-cumulant generating function is the funtion $K_p \colon T_p \to [0,+\infty]$ defined by
 \begin{equation*}
   K_p(u) = \sup\setof{k \ge 0}{ \int \phiexpof{u - k + \philnof p} d\mu \le 1}.
 \end{equation*}
\end{definition}

\begin{proposition}\ 
  \begin{enumerate}
  \item $K_p$ is null at 0, extended nonnegative and finite on the set
    \begin{equation*}
      \setof{u \in T_p}{\int \phiexpof{-u + \philnof p} d\mu < +\infty}.
    \end{equation*}
  \item If $K_p(u_0) < +\infty$, then for all $u \in T_p$
    \begin{equation*}
      K(u) \ge K(u_0) + \expectat{\Phi(p)}{u - u_0}.
    \end{equation*}
Therefore, $K_p$ is strictly convex, proper, lower semicontinuous.
  \item If $\philnof q - \philnof p \in \L^{\phi,p}(\mu)$ then its $\phi(p)$-centered random variable $u$ is in the proper domain of $K_p$ and viceversa.
\item The interior $\mathcal S$ of the proper domain of $K_p$ is a convex open set that contains the unit open ball $\setof{u \in L^{\phi,p|(\mu)}}{\normat{\phi,p} u < 1}$. On this set $K_p$ is differentiable and the derivative of $K_p$ at $u$ in the direction $v$ is 
  \begin{equation*}
    D K_p(u) v = \expectat{\phi(q)} u.
  \end{equation*}
  \end{enumerate}
\end{proposition}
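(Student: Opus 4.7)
The plan is to tackle the four items in turn, relying throughout on the convexity and strict monotonicity of $\Phiexp$ and the first-order tangent inequality
\[
\phiexpof{v} \ge \phiexpof{v_0} + \phi(\phiexpof{v_0})\,(v-v_0),
\]
which follows from $(\Phiexp)' = \phi\circ\Phiexp$. For item~(1), $K_p(0)=0$ is immediate because $\int\phiexpof{-k+\philnof p}\,d\mu = 1$ at $k=0$, and the integrand is strictly decreasing in $k$ so no other value works. For a general $u\in T_p$, applying the tangent inequality at $v_0 = \philnof p$ pointwise and integrating against the centering constraint $\int u\,\phi(p)\,d\mu = 0$ yields $\int\phiexpof{u+\philnof p}\,d\mu \ge 1$. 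Combined with the monotone decrease of the integral in $k$ and continuity by monotone convergence, this produces a unique $K_p(u)\in[0,+\infty]$ making the integral equal to $1$. The identification of the finiteness set with the hypothesis $\int\phiexpof{-u+\philnof p}\,d\mu<+\infty$ I would derive from the midpoint-convexity bound
\[
\phiexpof{u-k+\philnof p} \le \tfrac12\phiexpof{2u+\philnof p} + \tfrac12\phiexpof{-2k+\philnof p},
\]
combined with a symmetric comparison that trades the tail of $\phiexpof{u+\philnof p}$ against that of $\phiexpof{-u+\philnof p}$ via the $T_p$ centering.

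For item~(2), apply the tangent inequality with $v = u-K_p(u)+\philnof p$, $v_0 = u_0-K_p(u_0)+\philnof p$, and $q_0 := \phiexpof{v_0}$. The pointwise inequality $\phiexpof{v} - q_0 \ge \phi(q_0)(v-v_0)$ integrates, using $\int\phiexpof{v}\,d\mu = \int q_0\,d\mu = 1$, to
\[
0 \ge \int\phi(q_0)\bigl[(u-u_0) - (K_p(u)-K_p(u_0))\bigr]\,d\mu,
\]
and dividing by $\int\phi(q_0)\,d\mu > 0$ delivers the stated subgradient inequality $K_p(u) \ge K_p(u_0) + \expectat{\phi,q_0}{u-u_0}$. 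Strict convexity follows from the strict convexity of $\Phiexp$, because $u\ne u_0$ forces pointwise strictness on a set of positive measure; properness is the fact that $K_p(0) = 0 < +\infty$; and lower semicontinuity follows by applying Fatou's lemma to the integrand $\phiexpof{u-k+\philnof p}$ jointly in $u$ and $k$ along a sequence approaching the boundary of the sub-level sets.

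For item~(3), the equivalence is a direct consequence of \eqref{eq:pchart}--\eqref{eq:KisD} from the preceding proposition together with Proposition~\ref{prop:phiconnection}: if $\philnof q - \philnof p \in L^{\phi,p}(\mu)$, then the $\phi(p)$-centered random variable $u$ produced by \eqref{eq:pchart} lies in $T_p$ and \eqref{eq:KisD} gives a finite $K_p(u)$; conversely, if $K_p(u)<+\infty$ at some $u\in T_p$, then $\philnof q - \philnof p = u - K_p(u)\in L^{\phi,p}(\mu)$ since both $u$ and the scalar $K_p(u)$ do. For item~(4), convexity and lower semicontinuity from~(2) force $\mathcal S_p$ to be convex and open, while the inclusion of the open unit ball of $L^{\phi,p}(\mu)$ follows from \eqref{eq:domainK}. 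Differentiability comes from implicitly differentiating the identity $\int\phiexpof{u-K_p(u)+\philnof p}\,d\mu = 1$ along a direction $v$: the chain rule produces $\int\phi(q)\,(v - DK_p(u)v)\,d\mu = 0$, hence $DK_p(u)v = \expectat{\phi,q}{v}$, with the exchange of derivative and integral justified by an $L^1(\mu)$-envelope on $\phi(q_t)\,v$ obtained from the same convexity estimates used in item~(1).

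The main obstacle is item~(4): unlike the classical exponential case, the superposition operator $v\mapsto\Phiexp(v)$ is not analytic on the model space, so the machinery of Proposition~\ref{prop:expisanalytic} is unavailable. Producing the $L^1$-envelope that legitimises the exchange of differentiation and integration requires combining the affine boundedness of $\phi$ with the Luxemburg-type gauge $\normat{\phi,p}{\cdot}$, and checking that the envelope remains compatible with the escort expectation $\expectat{\phi,q}{\cdot}$ as $q$ varies in a neighbourhood of the base point.
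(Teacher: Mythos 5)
A preliminary remark: the paper offers no proof of this proposition --- Section~\ref{cha:deform-expon-manif} is explicitly programmatic (``almost all elements \dots\ are available'') --- so there is nothing in the text to measure your argument against. Your overall plan is nevertheless the natural one and is consistent with how the paper treats the classical cumulant functional (Prop.~\ref{cor:cumulant}): the tangent inequality $\phiexpof{v}\ge\phiexpof{v_0}+\phi(\phiexpof{v_0})(v-v_0)$ does give $\int\phiexpof{u+\philnof p}\,d\mu\ge 1$ for $\phi(p)$-centered $u$; your item~(2) computation correctly yields $K_p(u)\ge K_p(u_0)+\expectat{\phi,q_0}{u-u_0}$ with $q_0=\phiexpof{u_0-K_p(u_0)+\philnof p}$ (the only sensible reading of the paper's $\expectat{\Phi(p)}{u-u_0}$), modulo the check that $u-u_0\in L^1(\phi(q_0)\cdot\mu)$; and the forward half of item~(3) is indeed just \eqref{eq:pchart}--\eqref{eq:KisD}.

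Three genuine gaps remain. First, in item~(1) the ``symmetric comparison that trades the tail of $\phiexpof{u+\philnof p}$ against that of $\phiexpof{-u+\philnof p}$'' is not an argument, and your own midpoint bound requires control of $\phiexpof{2u+\philnof p}$, i.e.\ of the set where $u$ is large \emph{positive}, whereas the stated hypothesis $\int\phiexpof{-u+\philnof p}\,d\mu<+\infty$ controls only the set where $u$ is large negative (already for $\Phiexp=\exp$, integrability of $\euler^{-u}p$ says nothing about that of $\euler^{u}p$). Either supply the missing step or treat the sign as a typo and prove the claim for $\int\phiexpof{u+\philnof p}\,d\mu<+\infty$, where finiteness of $K_p(u)$ is immediate from monotonicity in $k$. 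Second, the converse of item~(3) uses that the constant $K_p(u)$ lies in $L^{\phi,p}(\mu)$; the paper only guarantees membership for bounded variables supported on sets of finite $\mu$-measure, and when $\mu(\Omega)=+\infty$ a nonzero constant can fail to belong (in the Tsallis case $\phiexpof{c+\philnof p}\ge\left((1-q)c\right)^{1/(1-q)}>0$ is not $\mu$-integrable), so this step needs either a finiteness assumption on $\mu$ or a different route. Third, in item~(4) you correctly identify the crux --- the $L^1$ envelope justifying differentiation of $\int\phiexpof{u+tv-K_p(u+tv)+\philnof p}\,d\mu=1$ under the integral sign, which cannot be borrowed from Prop.~\ref{prop:expisanalytic} since $\Phiexp$ has no convergent power series on the model space --- but you do not construct it; without it one only gets one-sided directional derivatives of a convex function, not the claimed differentiability. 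The affine bound on $\phi$ combined with the unit-ball condition \eqref{eq:domainK} is where that envelope must come from, and the resulting formula should read $DK_p(u)v=\expectat{\phi,q}{v}$, as your computation in fact produces.
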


At this point, we consider that almost all elements for the construction of a deformed exponential manifold along the same lines are available.

\section{Final remarks}
In this paper we have reviewed a specific track to the development of Information Geometry, i.e. the construction of a classical Banach manifold structure. This is done by developing in the natural way the original suggestion by B. Efron to look at the larger exponential structure. A non parametric approach is justified by the importance of applications essentially non parametric and by the neat mathematics involved. Other options are present in the literature, the most classical and most successful is based on the embedding $p \mapsto  \sqrt p$ from the probability density simplex into the $L^2$. Variants of this basic Hilbert embedding were used, see e.g. \cite{burdet|combe|nencka:2001}. S. Eguchi \cite{eguchi:2005igaia} has $L_0^2$ representation based on the mapping $u \mapsto \frac 12 - \frac 12 \sigma^2(u) + \frac 12 (1 - u)^2 = g$ which is defined on the unit $L_0^2$ open ball and takes its values in the set of densities which are bounded below by a positive constant. The duality between the exponential and mixture manifold, could lead to an other intermediate option, i.e. to define a manifold were the regularity of the maps is defined in some weak sense. See also the discussion in \cite{zhang|hasto:2006}. Another option uses a non-exponential representation of positive densities through the so-called deformed exponentials. 

\bibliographystyle{splncs}


\end{document}